\theoremstyle{plain}
\newtheorem*{theorem*}{Theorem}
\newtheorem{theorem}{Theorem}[section]
\newtheorem{claim}{Claim}
\newtheorem{corollary}[theorem]{Corollary}
\newtheorem{lemma}[theorem]{Lemma}
\newtheorem{proposition}[theorem]{Proposition}
\theoremstyle{definition}
\newtheorem{remark}{Remark}
\newcommand{\R}{\mathds{R}}
\newcommand{\sn}[1]{\mathds{S}^{#1}}
\newcommand{\forma}[1]{\langle #1 \rangle}
\newcommand{\grad}{\text{grad}}
\newcommand{\graph}{\text{graph}}
\newcommand{\norma}[1]{\left\Vert #1 \right\Vert}
\newcommand{\n}{\nabla}
\newcommand{\rar}{\R^2 \rtimes_A \R}
\newcommand{\raz}{\R^2 \rtimes_A \{0\}}
\newcommand{\xt}{\widetilde{x}}
\newcommand{\yt}{\widetilde{y}}
\newcommand{\tr}{\text{trace}}
\newcommand{\Sbn}{\overline{\Sigma}_n}
\newcommand{\Md}[4]{\left(\begin{array}{cc}{#1}&{#2}\\{#3}&{#4}\end{array}\right)}
\newcommand{\abs}[1]{\vert #1 \vert}
\newcommand{\len}{\text{length}}
\newcommand{\PSL}{\widetilde{PSL}}
\newcommand{\Omegab}{\overline{\Omega}}
\title{The mean curvature equation on semidirect products $\rar$: Height estimates and Scherk-like graphs}
\author{\'Alvaro Kr\"uger Ramos\thanks{Supported by CNPq - Brazil}}
\begin{document}
\maketitle

\begin{abstract}
On the ambient space of a Lie group with a left invariant metric that is isometric and isomorphic to a semidirect product $\rar$, we consider a domain $\Omega\subseteq \raz$ and vertical $\pi$-graphs over $\Omega$ and study the partial differential equation a function $u:\Omega \rightarrow \R$ must satisfy in order to have prescribed mean curvature $H$. Using techniques from quasilinear elliptic equations we prove that if a $\pi-$graph has non-negative mean curvature, then it satisfy some uniform height estimates that depend on $\Omega$ and on a parameter $\alpha$, given a priori. When $\tr(A) > 0$, these estimates imply that the oscillation of a minimal graph assuming the same constant value $n$ along the boundary tends to zero when $n\rightarrow + \infty$ and goes to $+ \infty$ if $n\rightarrow - \infty$. Furthermore, we use some of the estimates, allied with techniques from Killing graphs, to prove the existence of minimal $\pi-$graphs assuming the value $0$ along a piecewise smooth curve $\gamma$ with endpoints $p_1,\,p_2$ and having as boundary $\gamma \cup (\{p_1\}\times[0,\,+\infty))\cup(\{p_2\}\times[0,\,+\infty))$.

\vspace{.15cm}
\noindent{\it Mathematics Subject Classification:} Primary 53A10, Secondary 49Q05, 35J62.

\vspace{.1cm}

\noindent{\it Key words and phrases:} Minimal surfaces, metric Lie groups, semidirect products, quasilinear elliptic operator, height estimates, Scherk-like surfaces.
\end{abstract}

\section{Introduction}\label{secIntroSemid}

The subject of minimal surfaces is among the most beautiful - and also most studied - objects in Differential Geometry, and many times minimal graphs play a important role on the field. Many deep results may be obtained from looking to a minimal surface as a local graph and bringing techniques from partial differential equations to help solve geometric questions.

On a series of papers (\cite{MMPR,MMPR2,MMPR3,MP}), W. Meeks III, P. Mira, J. P\'erez and A. Ros studied constant mean curvature spheres on three-dimensional homogeneous spaces and semidirect products of the form $\rar$ came to play a very important role on their proofs. It is true that any simply connected metric Lie group of dimension 3 is either $SU(2)$ or $\widetilde{\text{PSL}}(2,\R)$ endowed with a left invariant metric or it is isomorphic and isometric to a semidirect product $\rar$ with its canonical left invariant metric, where $A \in M_2(\R)$ is some $2\times2$ square matrix (see the work of W. Meeks and J. P\'erez \cite{MP}).

Although a lot of work has been done on the last years on the ambient space of semidirect products (or, more generally, on homogeneous spaces of dimension 3), the theory of minimal $\pi-$graphs on semidirect products is still on development, and many interesting questions on this field remain open. For instance is it true that a minimal $\pi-$graph on $\rar$ is stable? Or, more generally, if the left invariant Gauss map of a surface $\Sigma \subseteq \rar$ is contained on a hemisphere, does it imply $\Sigma$ is stable? Another related question that unfortunately remains unsolved concerns the uniqueness of minimal $\pi-$graphs with prescribed boundary.

There are two main difficulties when dealing with minimal $\pi-$graphs on semidirect products $\rar$: the first one is that vertical translations $(x,y,z)\mapsto (x,y,z+t)$ are not isometries of the ambient space. In particular this affects the mean curvature operator so its high order terms coefficients depend on the solution, and the comparison principle (for instance Theorem 10.1 of \cite{GT} and its generalizations) does not apply. The second one is that, unless $\tr(A) = 0$, constant functions do not provide minimal graphs, so there is no maximum principle.

On this paper, we consider a convex domain $\Omega \subseteq \raz$ with piecewise smooth boundary and exhibit the differential equation some function $u:\Omega \rightarrow \R$ must satisfy for its $\pi-$graph $\text{graph}(u) = \{(x,\,y,\,u(x,y)) \in \rar;\,(x,\,y,\,0) \in \Omega\}$ to have prescribed mean curvature function. Depending on the trace and on the determinant of $A$ such PDE is going to have a different behaviour. For instance, when $\tr(A) = 0$, if $u:\Omega \rightarrow \R$ is a function whose $\pi-$graph has non negative mean curvature $H \geq 0$ with respect to the upwards orientation, then it satisfies the maximum principle

\begin{equation}\label{maxprin}
\displaystyle \sup_{\partial\Omega} u = \sup_{\Omega} u.
\end{equation}

This property was first observed by W. Meeks, P. Mira, J. P\'erez and A. Ros (\cite{MMPR3}, stated on its generality as Lemma~\ref{tracelemma} below), and we remark that \eqref{maxprin} does not hold on the case of $\tr(A) > 0$, even for $H\equiv 0$: a minimal graph that is constant along its boundary necessarily assumes an interior maximum and it is not constant, as horizontal planes (representing constant functions) are no longer minimal. It becomes a natural question to ask if there is a \emph{maximal jump} these minimal graphs that are constant along the boundary can attain, and this question is answered via height estimates of partial differential equations.

On Section \ref{secHE}, we prove some of our main results. Given $\Omega \subseteq \raz$ and a parameter $\alpha \in \R$, we obtain a constant $C = C(\text{diam}(\Omega),\,\alpha)$ such that every function $u:\Omega \rightarrow \R$ with mean curvature (not necessarily constant) $H \geq 0$ satisfy 

\begin{equation}\label{maineq}
\displaystyle \sup_{\Omega} u \leq \max\left\{\sup_{\partial \Omega}u,\,\alpha\right\} + C.
\end{equation}

\noindent This is Theorem \ref{main}. Still on Section \ref{secHE} it is then proved that the dependence of the a priori constant $\alpha$ on \eqref{maineq} is essential (Theorem \ref{thmabs}) for the validity of the result. We also use the freedom on the parameter $\alpha$ in order to obtain that (on the case $\tr(A) > 0$, see Theorem~\ref{oscilation} for details) the oscillation of a family of solutions to the problem

\begin{equation*}
\left\{ \begin{array}{l}
\Sigma = \graph(u) \text{ is a minimal surface of } \rar\\
u\vert_{\partial \Omega} = c \in \R
\end{array}
\right.
\end{equation*}

\noindent converges to zero when $c$ approaches $+ \infty$ and goes to infinite, if $c \rightarrow - \infty$.

We finish the paper on Section~\ref{secScherk}, where we bring techniques from Killing graphs, in addition to the estimates on the coefficients of the mean curvature operator obtained on Section~\ref{secHE}, to generalize an argument of A. Menezes \cite{Men} to any semidirect product $\rar$, and obtain the existence of minimal $\pi-$graphs which are similar to the fundamental piece of the doubly periodic Scherk surface of $\R^3$, on Theorem \ref{thmScherks}.

\vspace{0.1cm}

\noindent \textbf{Acknowledgements:} I would like to thank prof. Joaqu\'in P\'erez for his most valuable advice and hospitality during my stay at Granada, which led to the present paper, and to prof. Jaime Ripoll for his continuous support, specially for his help on the last section of this work, that is part of my doctoral thesis.

\section{Semidirect products $\rar$}\label{rar}

This section is to give a brief review on semidirect products $\rar$. We follow the notation and construction of W. Meeks and J. P\'erez, \cite{MP}.

Let $H,\,V$ to be two groups and let $\varphi:V \rightarrow \text{Aut}(H)$ a group homomorphism between $V$ and the group of automorphisms of $H$. Then, the \emph{semidirect product between $H$ and $V$ with respect to $\varphi$}, denoted by $G = H\rtimes_\varphi V$, is the cartesian product $H \times V$ endowed with the group operation $*: G \times G \rightarrow G$ given by 

\begin{equation*}
(h_1,\,v_1)*(h_2,\,v_2) = (h_1\cdot\varphi_{v_1}(h_2),\,v_1v_2).
\end{equation*}

With this group operation, then both $H$ and $V$ can be viewed as subgroups of $G$ and $H \triangleleft G$ is identified to a normal subgroup of $G$. This construction comes to generalize the notion of direct product of groups, where the operation on the cartesian product $H \times V$ would be the product operation $(h_1,\,v_1)*(h_2,\,v_2) = (h_1h_2,\,v_1v_2)$. It is clear that this notion can be recovered from the one of a semidirect product, when considering the automorphism $\varphi(v) = Id_H$ being the constant map into the identity of the group $H$.

Even on the particular case of $H = \R^2$ and $V = \R$ being two abelian groups, it is possible to obtain a great variety of groups via the semidirect product of $\R^2$ and $\R$, depending uniquely on the choice of the (now 1-parameter) family of automorphisms of $\R^2$. Precisely, with the exceptions of $SU(2)$ (not diffeomorphic to $\R^3$) and $\PSL(2,\,\R)$ (has no normal subgroup of dimension 2), it is possible to construct all three dimensional simply connected Lie groups (see \cite{MP}) using the following setting: fix a matrix $A \in M_2(\R)$,

\begin{equation}\label{A}
A = \Md{a}{b}{c}{d}
\end{equation}

\noindent and, for each $z \in \R$ we consider the automorphism of $\R^2$ generated by the exponential map of $Az$, $e^{Az}:\R^2\rightarrow\R^2$. Then, we let 

\begin{equation*}
\begin{array}{cccc}
\varphi:&\R &\rightarrow &\text{Aut}(\R^2)\\
&z & \mapsto & e^{Az},
\end{array}
\end{equation*}

\noindent and we define $\rar = \R^2\rtimes_{\varphi}\R$ as the semidirect product between $\R^2$ and $\R$ with respect to the automorphisms generated by $e^{Az}$. Explicitly, the semidirect product $\rar$ is the set $\R^3 = \R^2 \times \R$ endowed with the group operation $*$ given by

\begin{equation}\label{oprar1}
(x_1,\,y_1,\,z_1) * (x_2,\,y_2,\,z_2) = \left(\left(\begin{array}{c} x_1 \\y_1\end{array}\right) + e^{Az}\left(\begin{array}{c} x_2 \\y_2\end{array}\right),\,z_1+z_2\right).
\end{equation}

Using the notation of \cite{MP}, we denote the exponential map $e^{Az}$ by

\begin{equation}\label{eAz}
e^{Az} = \Md{a_{11}(z)}{a_{12}(z)}{a_{21}(z)}{a_{22}(z)},
\end{equation}

\noindent and observe that the vector fields defined by

\begin{equation}\label{Ei}
E_1(x,y,z) = a_{11}(z)\partial_x+a_{21}(z)\partial_y,\ E_2(x,y,z) = a_{12}(z)\partial_x+a_{22}(z)\partial_y,\ E_3 = \partial_z
\end{equation}

\noindent are left invariant and extend the canonical basis $\{\partial_x(0),\,\partial_y(0),\,\partial_z(0)\}$ at the origin of $\R^3$. Moreover, if we let

\begin{equation}\label{Fi}
F_1 = \partial_x,\ F_2 = \partial_y,\ F_3(x,y,z) = (ax+by)\partial_x+(cx+dy)\partial_y+\partial_z,
\end{equation}

\noindent it follows that each $F_i$ is a right invariant vector field of $\rar$, in particular they are Killing fields with respect to any left invariant metric on $\rar$.

The metric to be considered on $\rar$ is the \emph{canonical left invariant metric} (cf. \cite{MP}), that is the one given by stating that $\{E_1,\,E_2,\,E_3\}$ are unitary and orthogonal to each other everywhere. In particular, as it holds

\begin{eqnarray*}
\partial_x(x,\,y,\,z) & = & a_{11}(-z)E_1+a_{21}(-z)E_2\\
\partial_y(x,\,y,\,z) & = & a_{12}(-z)E_1+a_{22}(-z)E_2,
\end{eqnarray*}

\noindent we can express the metric of $\rar$ in coordinates as

\begin{eqnarray*}
ds^2 & = & \left[a_{11}(-z)^2+a_{21}(-z)^2\right]dx^2+\left[a_{12}(-z)^2+a_{22}(-z)^2\right]dy^2+dz^2\\
&& + \left[a_{11}(-z)a_{12}(-z)+a_{21}(-z)a_{22}(-z)\vphantom{a_{11}(x)^2}\right](dx\otimes dy + dy\otimes dx).
\end{eqnarray*}

Now, we remark that $e^{-Az} = \left(e^{Az}\right)^{-1}$, so, as $\det(e^{Az}) = e^{z\tr(A)}$, we have

\begin{equation*}
\Md{a_{11}(-z)}{a_{12}(-z)}{a_{21}(-z)}{a_{22}(-z)}  =  e^{-z\tr(A)}\Md{a_{22}(z)}{-a_{12}(z)}{-a_{21}(z)}{a_{11}(z)},
\end{equation*}

\noindent and we can introduce the notation

\begin{eqnarray}
Q_{11}(z)\ = \ \forma{\partial_x,\,\partial_x} & = & e^{-2z\tr(A)}\left[a_{21}(z)^2+a_{22}(z)^2\right]\nonumber\\
Q_{22}(z)\ = \ \forma{\partial_y,\,\partial_y} & = & e^{-2z\tr(A)}\left[a_{11}(z)^2+a_{12}(z)^2\right]\label{Qij}\\
Q_{12}(z)\ = \ \forma{\partial_x,\,\partial_y} & = & -e^{-2z\tr(A)}\left[a_{11}(z)a_{21}(z)+a_{12}(z)a_{22}(z)\right]\nonumber
\end{eqnarray}

\noindent to obtain that the metric $ds^2$ is expressed by

\begin{equation}\label{ds2}
ds^2 = Q_{11}(z)dx^2+Q_{22}(z)dy^2+dz^2+Q_{12}(z)(dx\otimes dy + dy\otimes dx).
\end{equation}

We notice that, if $A,\,B \in M_2(\R)$ are two congruent matrices, on the sense that there is some orthogonal matrix $P \in O(2)$ such that $B = PAP^{-1}$, then the groups $\rar$ and $\R^2\rtimes_{B}\R$, endowed with their respective canonical left invariant metrics are \emph{isomorphic and isometric}, and the map that makes the identification is a simple rotation on horizontal planes induced by $P$:

\begin{equation}\label{isoiso}
\begin{array}{rccl}
\varphi:&\rar&\rightarrow & \R^2\rtimes_B \R\\
&(x,y,z)&\mapsto & (P(x,y),z).
\end{array}
\end{equation}

\noindent We also remark that the Lie brackets of $\rar$ satisfy

\begin{equation}\label{Liebracket}
[E_1,\,E_2] = 0, \ [E_3,\,E_1] = aE_1 + cE_2, \ [E_3,\,E_2] = bE_1 + dE_2,
\end{equation}

\noindent so Levi-Civita equation implies that the Riemannian connection of $\rar$ is given by

\vspace{0.2cm}
\begin{tabular}{rcl|rcl|rcl}
\hspace{-0.1cm}$\n_{E_1}E_1$ &$\hspace{-0.1cm} = \hspace{-0.1cm}$ &$ aE_3$ &$\n_{E_1}E_2$ &$ \hspace{-0.1cm}=\hspace{-0.1cm} $&$\frac{b+c}{2}E_3 $&$\n_{E_1}E_3$ & $\hspace{-0.1cm}=\hspace{-0.1cm} $&$-aE_1-\frac{b+c}{2}E_2$ \\
\hspace{-0.1cm}$\n_{E_2}E_1$ &$ \hspace{-0.1cm}=\hspace{-0.1cm}$ &$ \frac{b+c}{2}E_3 $ &$\n_{E_2}E_2$ &$ \hspace{-0.1cm}=\hspace{-0.1cm}$ &$ dE_3 $&$\n_{E_2}E_3 $&$ = $&$ -\frac{b+c}{2}E_1-dE_2$\\
\hspace{-0.1cm}$\n_{E_3}E_1$ &$\hspace{-0.1cm} =\hspace{-0.1cm}$ &$ \frac{c-b}{2}E_2 $ &$\n_{E_3}E_2$ &$ \hspace{-0.1cm}=\hspace{-0.1cm} $&$ \frac{b-c}{2}E_1$ &$\n_{E_3}E_3 $&$ = $&$0$. 
\end{tabular}

\vspace{0.2cm}

It is important to notice two properties of planes on $\rar$: first, we observe that the metric $ds^2$ is invariant by rotations by angle $\pi$ around the vertical lines $\{(x_0,\,y_0,\,z);\ z \in \R\}$, so vertical planes are minimal surfaces of $\rar$. Moreover, horizontal planes $\{z =c\}$ have $E_3$ as an unitary normal vector field, so they have constant mean curvature (with respect to the upward orientation) given by $H = \tr(A)/2$. In particular, horizontal planes of $\rar$ are going to be minimal if and only if $\tr(A) = 0$.

\begin{figure}
\centering
\begin{subfigure}[b]{0.48\textwidth}
\includegraphics[width=\textwidth]{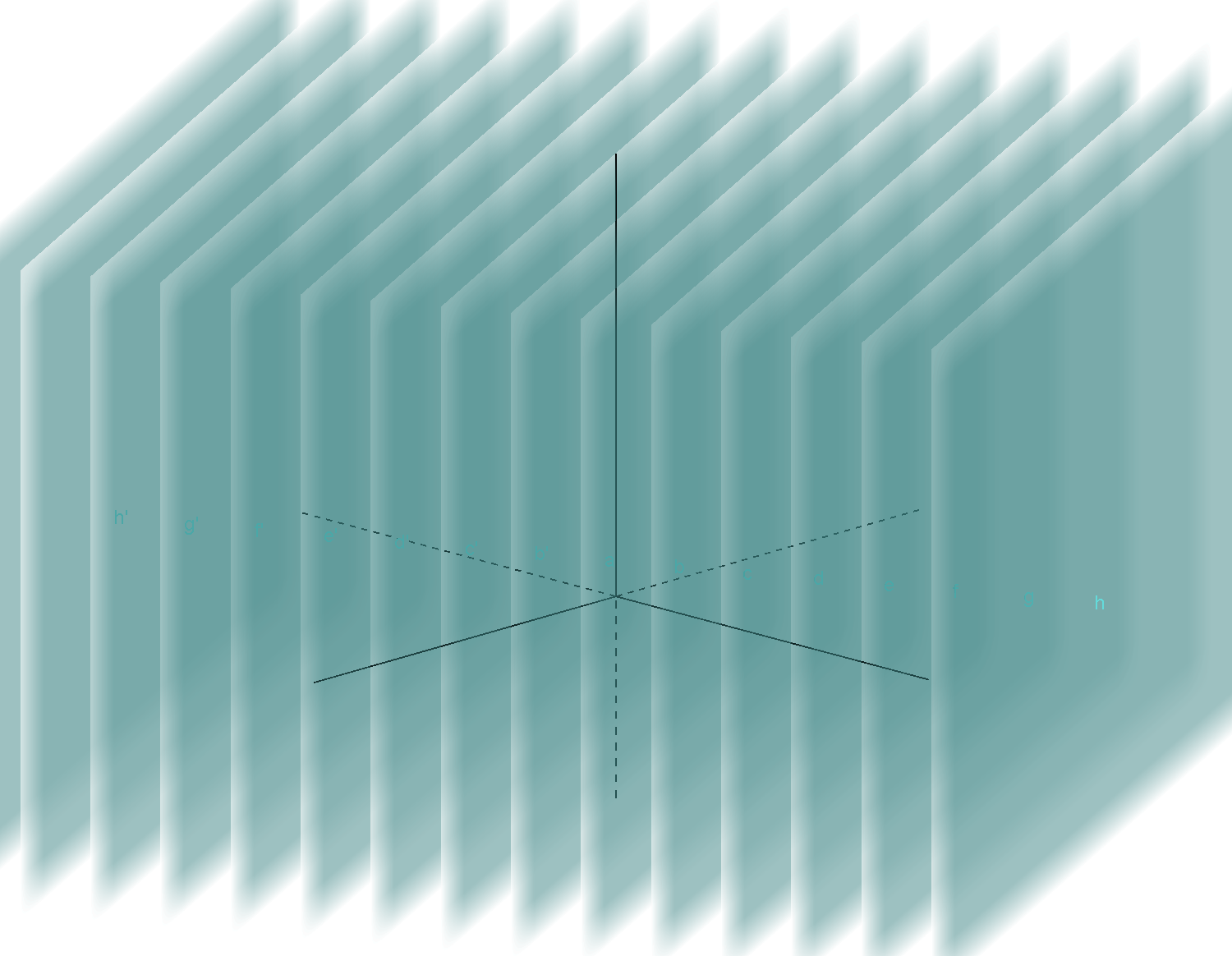}
\caption{A foliation of $\rar$ by vertical (minimal) planes}
\end{subfigure}
\begin{subfigure}[b]{0.48\textwidth}
\includegraphics[width=\textwidth]{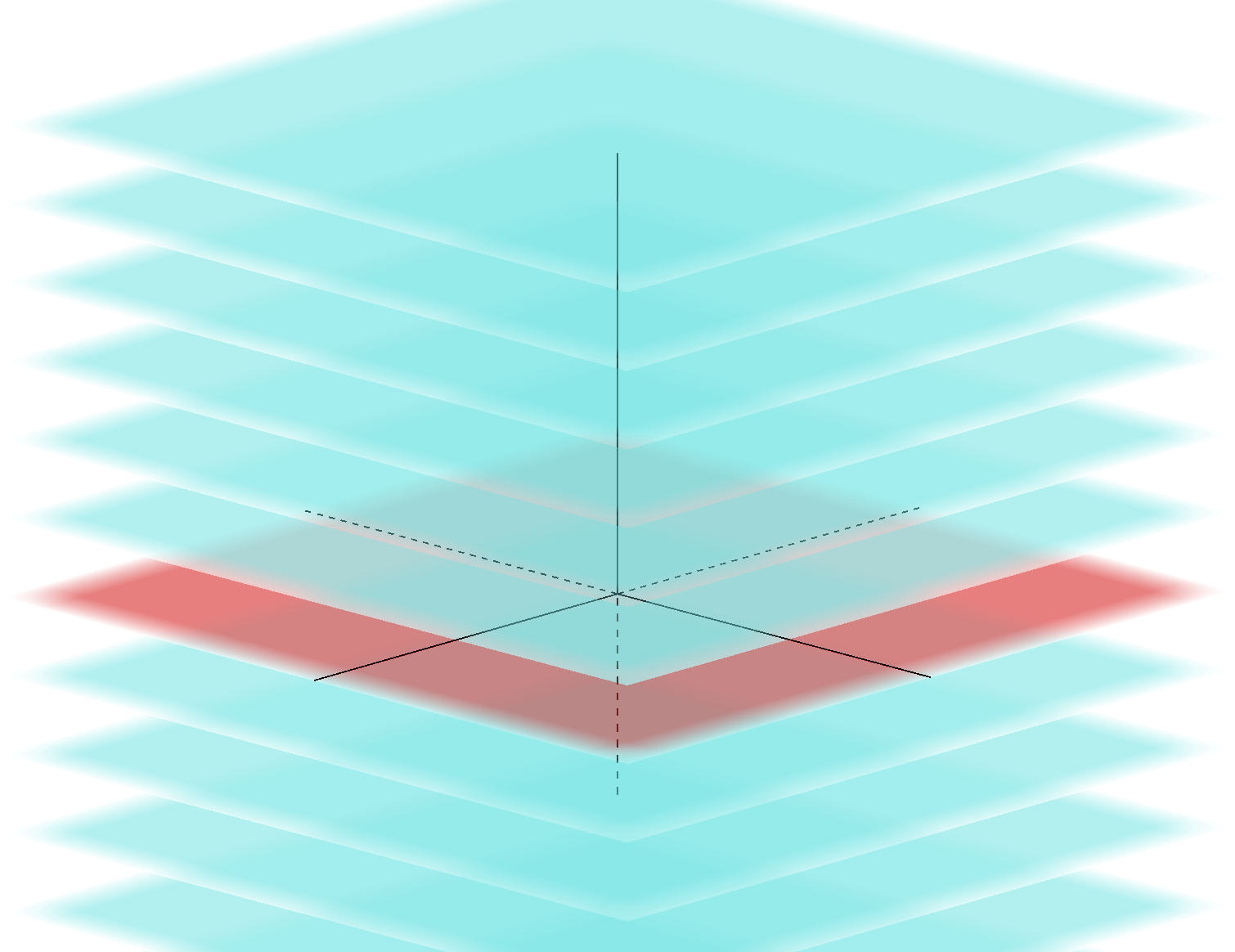}
\caption{The foliation of $\rar$ by horizontal (CMC) planes}
\end{subfigure}
\caption{\small{On semidirect products $\rar$ every \emph{vertical} plane is a minimal surface. Horizontal planes are flat, have constant mean curvature $H = \tr(A)/2$ and the subgroup $\mathbb{H} = \raz$ (highlighted on the above right picture) is normal on $\rar$.}}
\end{figure}

However, the difference between the cases $\tr(A) = 0$ and $\tr(A) \neq 0$ go further than horizontal planes being minimal: concerning the classification of simply connected Lie groups of dimension 3, we notice that W. Meeks and J. P\'erez, \cite{MP} proved that any \emph{non unimodular}\footnote{A group $G$ is said to be unimodular if $\text{det}\left(\text{Ad}_g\right) = 1$ for all $g \in G$} Lie group of dimension 3 is isomorphic and isometric to a semidirect product $\rar$, endowed with its left invariant metric, where $A \in M_2(\R)$ is such that $\tr(A) \neq 0$ (Lemma 2.11, \cite{MP}). Moreover, they also prove that, with the exceptions of $SU(2)$ and $\PSL(2,\,\R)$, all other unimodular metric Lie groups are isomorphic and isometric to a semidirect product $\rar$, with $\tr(A) = 0$ (Section 2.6 and Theorem 2.15, \cite{MP}). Herein, we reefer to the cases $\tr(A) = 0$ or $\tr(A) \neq 0$ respectively as the \emph{unimodular} and \emph{non unimodular} case.

\section{Mean curvature equation and height estimates}\label{secHE}

On this section, we consider a smooth open domain $\Omega \subseteq \raz$ and a function $u:\Omega \rightarrow \R$. We define the $\pi$-graph of $u$ by 

$$\Sigma = \text{graph}(u) = \{(x,\,y,\,u(x,y))\in \rar;\,(x,\,y,\,0) \in \Omega\}.$$

When oriented with respect to the upwards direction, the mean curvature of $\Sigma$ is given by

\begin{eqnarray}
H\hspace{-0.3cm} & = &\hspace{-0.3cm}\left.\frac{e^{2u\tr(A)}}{2W^3}\right[ u_{xx}\left(Q_{22}(u)\hspace{-0.08cm}+\hspace{-0.08cm}u_y^2\right)+u_{yy}\left(Q_{11}(u)\hspace{-0.08cm}+\hspace{-0.08cm}u_x^2\right) - 2u_{xy}\left(Q_{12}(u)\hspace{-0.08cm}+\hspace{-0.08cm}u_xu_y\right)\nonumber\\
&&\left.\hspace{-0.4cm}\phantom{\frac{e^{2u\tr(A)}}{2W^3}}+G_1(u)u_x^2+G_2(u)u_y^2+G_3(u)u_xu_y+(a+d)e^{-2u\tr(A)}\right]\hspace{-0.1cm},\label{propPDE}
\end{eqnarray}

\noindent where $Q_{ij}$ are the coefficients of the metric of $\rar$, defined as on \eqref{Qij}, $G_i:\R\rightarrow \R$ are the functions given by

\begin{eqnarray}
G_1(z)\hspace{-0.3cm} & = & \hspace{-0.3cm}e^{-2z\tr(A)}\big((2a+d)a_{11}(z)^2+(a+2d)a_{12}(z)^2+(b+c)a_{11}(z)a_{12}(z)\big)\nonumber\\
G_2(z)\hspace{-0.3cm} & = &\hspace{-0.3cm} e^{-2z\tr(A)}\big((2a+d)a_{21}(z)^2+(a+2d)a_{22}(z)^2+(b+c)a_{21}(z)a_{22}(z)\big)\nonumber\\
G_3(z)\hspace{-0.3cm} & = &\hspace{-0.3cm} e^{-2z\tr(A)}\big((4a+2d)a_{11}(z)a_{21}(z)+(2a+4d)a_{12}(z)a_{22}(z)\nonumber\\
&&\hspace{-0.3cm} \phantom{e^{-2z\tr(A)}\big(}+(b+c)(a_{11}(z)a_{22}(z)+a_{12}(z)a_{21}(z))\big),\label{Gi}
\end{eqnarray}

\noindent and $W$ is

\begin{eqnarray*}
W(z,p) &=& \sqrt{1+(a_{11}(z)p_1+a_{21}(z)p_2)^2+(a_{12}(z)p_1+a_{22}(z)p_2)^2}\\
&=& \sqrt{1+e^{2z\tr(A)}\left(\vphantom{\big(}Q_{22}(z)p_1^2-2Q_{12}(z)p_1p_2+Q_{11}(z)p_2^2\right)}.
\end{eqnarray*}

Following the above notation, we define the \emph{mean curvature operator}:

\begin{eqnarray}
Q(u)\hspace{-0.3cm} & = &\hspace{-0.3cm} u_{xx}\left(Q_{22}(u)+u_y^2\right)+u_{yy}\left(Q_{11}(u)+u_x^2\right) + 2u_{xy}\left(Q_{12}(u)-u_xu_y\right)\nonumber\\
&&\hspace{-0.3cm} +G_1(u)u_x^2+G_2(u)u_y^2+G_3(u)u_xu_y+(a+d)e^{-2u\tr(A)},\label{Q}
\end{eqnarray}

\noindent and it follows that, if $\Omega \subseteq \raz$, then the $\pi-$graph of a function $u:\Omega \rightarrow \R$ is going to be a minimal surface of $\rar$ if and only if $u$ satisfies $Q(u) = 0$. We also notice that $Q$ is a quasilinear elliptic operator, as the matrix

\begin{equation}\label{MatrixQ}
\mathcal{Q} = \Md{Q_{22}(z)+p_2^2}{Q_{12}(z)-p_1p_2}{Q_{12}(z)-p_1p_2}{Q_{11}(z)+p_1^2}
\end{equation}

\noindent is positive definite for every $z \in \R$ and $p=(p_1,\,p_2) \in \R^2$, as it is easy to see using the relation 

\begin{equation*}
Q_{11}(z)Q_{22}(z)-Q_{12}(z)^2 = e^{-2z\tr(A)}.
\end{equation*}

On the papers of Meeks, Mira, P\'erez and Ros, some work has been developed in order to understand constant mean curvature $\pi-$graphs: the fact that $\rar$ admits a foliation by parallel horizontal planes of constant mean curvature $H = \tr(A)/2$ determines much of the structure of those graphs. For instance, using this property and the mean curvature comparison principle, they are able to prove

\begin{lemma}[Assertion 15.5, \cite{MMPR3}]\label{tracelemma}
Let $D \subseteq \raz$ be a convex compact disk and let $C = \partial D$ be its boundary. Consider $\pi(x,y,z) = (x,y,0)$ the vertical projection. If $\Gamma \subseteq \pi^{-1}(C)$ is a closed simple curve such that the projection $\pi:\Gamma \rightarrow C$ monotonically parameterizes\footnote{This means that $\pi(\Gamma) \subset \partial \Omega$ and $ \pi^{-1}(\{p\}) \cap \Gamma$ is either a single point or a compact interval for every $p \in \partial \Omega$} $C$ and $h:\Gamma \rightarrow \R$ is the height function, let $c_0 = \inf_{\Gamma}h$ and $c_1 = \sup_{\Gamma}h$. If $\Sigma$ is a compact minimal surface with $\partial \Sigma = \Gamma$, it follows:

\begin{enumerate}
 \item If $\tr(A) \geq 0$, then $\Sigma \subseteq \pi^{-1}(D)\cap \{z \geq c_0\}$;
 \item If $\tr(A) \leq 0$, then $\Sigma \subseteq \pi^{-1}(D)\cap \{z \leq c_1\}$.
\end{enumerate}

\end{lemma}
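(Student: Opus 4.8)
The plan is to trap $\Sigma$ inside the solid region $\pi^{-1}(D)\cap\{z\geq c_0\}$ by means of two families of barrier surfaces, using only the mean curvature comparison principle: the vertical planes of $\rar$, which are minimal, will yield the horizontal confinement $\Sigma\subseteq\pi^{-1}(D)$, and the horizontal planes $\{z=c\}$, which have mean curvature $\tr(A)/2$ with respect to $E_3$, will yield $\Sigma\subseteq\{z\geq c_0\}$ when $\tr(A)\geq 0$. Item (2) is the mirror image of item (1) — look at the highest point of $\Sigma$ and orient the horizontal planes by $-E_3$, noting $\{z=c\}$ then has mean curvature $-\tr(A)/2\geq 0$ — so I only treat item (1); I also assume $\Sigma$ connected, the general case following componentwise once one observes that a closed component is ruled out by the same comparison applied at its highest point.

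\emph{Horizontal confinement.} Since $D$ is a convex compact disk, $D=\bigcap_L\overline{H}_L$, the intersection running over the supporting lines $L$ of $D$, where $\overline{H}_L$ denotes the closed half-plane bounded by $L$ that contains $D$. Fix such an $L$. The vertical plane $\pi^{-1}(L)$ is minimal, and so is every horizontal translate of it, since horizontal translations are left translations of $\rar$ and hence isometries. To see $\Sigma\subseteq\pi^{-1}(\overline{H}_L)$, start with $\pi^{-1}(L)$ pushed far off on the side away from $D$ and slide it back towards $D$; as long as the translate stays strictly off $D$ it is disjoint from $\pi^{-1}(\partial D)\supseteq\Gamma$, so any first contact with $\Sigma\cup\Gamma$ occurring there is at an interior point of $\Sigma$, where $\Sigma$ is tangent to a minimal vertical plane and lies locally on one side of it; by the strong maximum principle $\Sigma$ would then be contained in that vertical plane — impossible, since $\partial\Sigma=\Gamma$ projects onto $C$, which is not contained in a line. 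Hence $\Sigma\cup\Gamma\subseteq\pi^{-1}(\overline{H}_L)$, and intersecting over all $L$ gives $\Sigma\subseteq\pi^{-1}(D)$.

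\emph{Vertical confinement.} Assume $\tr(A)\geq 0$ and set $m=\min_{\Sigma\cup\Gamma}z$; I must show $m\geq c_0$. If not, then $m<c_0=\inf_\Gamma z$, so the minimum is attained at an interior point $p\in\Sigma$, and near $p$ the surface $\Sigma$ lies in $\{z\geq m\}$ and is tangent to the horizontal plane $\{z=m\}$ at $p$. Orienting both surfaces by $E_3$ at $p$, the comparison principle gives $0=H_\Sigma(p)\geq H_{\{z=m\}}(p)=\tr(A)/2$. If $\tr(A)>0$ this is an immediate contradiction. If $\tr(A)=0$ the inequality is vacuous, and instead I invoke the strong maximum principle: the difference of the two height functions over the common tangent plane at $p$ satisfies a linear elliptic equation, is nonnegative, and vanishes at $p$, hence vanishes near $p$; by unique continuation (the minimal surface operator $Q$ has analytic coefficients) $\Sigma\subseteq\{z=m\}$, whence $\Gamma=\partial\Sigma\subseteq\{z=m\}$ and $c_0=m$, contradicting $m<c_0$. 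Therefore $m\geq c_0$, and together with the horizontal confinement this proves item (1).

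\emph{Main obstacle.} The content of the proof is not in any individual estimate but in assembling the two barrier families and, crucially, in tracking orientations in the comparison principle: the confinement works only because the horizontal planes are mean-convex \emph{towards} the region in which we want to trap $\Sigma$. The delicate case is $\tr(A)=0$, where the soft comparison inequality degenerates to an equality and one must replace it by the strong maximum principle together with unique continuation; a minor additional care is needed to phrase ``$\Sigma$ lies on one side of the barrier near $p$'' correctly when $\Sigma$ is not assumed embedded, which is handled by passing to a one-sided local graph over the tangent plane at the contact point.
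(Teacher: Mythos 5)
This lemma is cited from \cite{MMPR3} and the paper supplies no proof of its own, only the remark that it follows from the foliation by horizontal planes of constant mean curvature $\tr(A)/2$ together with the mean curvature comparison principle. Your argument assembles exactly those ingredients (plus the minimality of vertical planes for the horizontal trapping), and the orientation bookkeeping in the comparison at the lowest interior point is handled correctly, including the degenerate case $\tr(A)=0$ via the strong maximum principle and an open--closed argument; so it is essentially the approach the paper alludes to.

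One slip is worth flagging: the parenthetical that a closed boundaryless component $\Sigma'$ is ``ruled out by the same comparison applied at its highest point.'' Under the paper's upward orientation, at the highest point of $\Sigma'$ the plane $\{z=\max_{\Sigma'} z\}$ lies above $\Sigma'$, so the comparison reads $0=H_{\Sigma'}\leq \tr(A)/2$, which is no contradiction when $\tr(A)>0$. You should apply the comparison at the \emph{lowest} point of $\Sigma'$ (yielding $0\geq\tr(A)/2$), or, more robustly and independently of the sign of $\tr(A)$, reuse your sliding vertical-plane barrier: a compact boundaryless minimal surface touched from one side by a minimal vertical plane must coincide with that plane by the strong maximum principle, which is impossible. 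This does not affect the main argument, which already assumes $\Sigma$ connected.
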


On the particular case of graphs, Lemma \ref{tracelemma} implies that a minimal graph over some smooth domain $\Omega \subseteq \raz$, compact and convex, satisfy the maximum principle if $\tr(A) \leq 0 $ and satisfy the minimum principle if $\tr(A) \geq 0$, satisfying both only on the unimodular case. However, when $\tr(A) > 0$ no uniform upper bound is obtained, neither a lower bound when $\tr(A) <0$. This motivates the search for height estimates for minimal graphs, obtained on the next result. Perhaps, the proof of Theorem~\ref{main} is as interesting as the result itself, as it gives some understanding on the behaviour of the operator $Q$, given by \eqref{Q}, on the many possible settings for the matrix $A$.

\begin{theorem}\label{main}
Let $A\in M_2(\R)$ be a matrix as in \eqref{A} and let $\rar$ be a semidirect product endowed with its canonical left invariant metric. Let $\Omega \subseteq \raz$ be a bounded, convex domain and let $\alpha \in \R$ be any given constant. Then, there exists a constant $C = C(\text{diam}(\Omega),\alpha)$ such that for every $u$ satisfying $Q(u) \geq 0$, it holds that

\begin{equation}\label{eqHE}
\displaystyle \sup_{\Omega} u \leq \max\left\{\sup_{\partial\Omega}u,\,\alpha\right\} + C.
\end{equation}

Furthermore, the \emph{most efficient} choice for $\alpha$ is to take $\alpha_0 = \sup_{\partial \Omega} u$, on the sense that if $\alpha > \alpha_0$, \eqref{eqHE} gives no relation concerning the difference between the maximum $u$ attains on the interior and the maximum it attains on the boundary, and if $\alpha < \alpha_0$ then $C(\alpha) > C(\alpha_0)$, so there is a worsening on the estimate.

\end{theorem}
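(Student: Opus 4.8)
The plan is to construct an explicit barrier (a supersolution of the equation $Q(v)\le 0$ in $\Omega$) built from the distance function to a hyperplane, and then invoke the comparison principle for the quasilinear elliptic operator $Q$ on the set where $u$ exceeds the barrier. Since $Q$ is elliptic but its coefficients depend on the value of the unknown through $Q_{ij}(u)$, $G_i(u)$ and the exponential factors, the key point is to absorb this $u$-dependence by working on a slab $\{\alpha \le z \le \alpha + C\}$ where all coefficients of the operator are uniformly controlled; the parameter $\alpha$ is precisely what fixes the vertical location of this slab, hence its appearance in the constant $C$. First I would normalize the geometry: after replacing $A$ by a congruent matrix via \eqref{isoiso} (which changes neither $\tr A$ nor the metric up to isometry) we may assume $\Omega$ lies in a horizontal strip $\{0 \le x \le D\}$ with $D \le \diam(\Omega)$; this is harmless and makes the one-variable barrier calculation concrete.

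Next I would look for a barrier of the form $v(x,y) = \max\{\sup_{\partial\Omega}u,\alpha\} + \psi(x)$, where $\psi:[0,D]\to[0,\infty)$ is smooth, $\psi(0)=\psi(D)=0$ is false — rather $\psi \ge 0$ and chosen so that $v \ge u$ on $\partial\Omega$ while $Q(v) \le 0$ wherever $\alpha \le v \le \alpha + \max\psi$. Plugging a function of $x$ alone into \eqref{Q}, only the $v_{xx}$, $v_x^2$ and the zeroth-order term $(a+d)e^{-2v\tr(A)}$ survive, so the inequality $Q(v)\le 0$ reduces to a scalar ODE inequality of the form
\begin{equation*}
\left(Q_{22}(v)+\text{(lower order)}\right)\psi'' + G_1(v)(\psi')^2 + (a+d)e^{-2v\tr(A)} \le 0.
\end{equation*}
Here is where one uses that $v$ ranges in a bounded slab: all the functions $Q_{22}(z)$, $G_1(z)$, $e^{-2z\tr(A)}$ are continuous, hence bounded above and below by positive constants on $[\alpha,\alpha+M]$ for any fixed height $M$; call these bounds $\lambda,\Lambda$ and $K$. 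Then choosing $\psi(x)$ of exponential type, say $\psi(x) = \tfrac{1}{\mu}\left(e^{\mu D}-e^{\mu x}\right)$ for a large constant $\mu=\mu(\lambda,\Lambda,K,D)$, makes the negative term $-\lambda\mu^2 e^{\mu x}\psi'' $ dominate; the height attained, $\max\psi = \psi(0)$, is then the desired constant $C=C(D,\alpha)$. One must iterate or bootstrap once: the slab height $M$ must be chosen consistently with $\max\psi$, but since enlarging $M$ only worsens $\lambda,\Lambda,K$ by fixed continuity bounds and $\max\psi$ depends on these through $\mu$, a single fixed-point argument on $M$ (or simply taking $M$ slightly larger than the eventual $C$ and checking consistency) closes the loop.

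With the barrier in hand, let $\Omega^+ = \{x\in\Omega : u(x) > v(x)\}$. On $\partial\Omega^+$ one has $u=v$ (either because $x\in\partial\Omega$ and $v\ge\sup_{\partial\Omega}u\ge u$, forcing $\Omega^+$ to avoid $\partial\Omega$, or because $u=v$ on the free boundary). On $\Omega^+$, $u>v\ge\alpha$, so $v$ takes values in the slab where $Q(v)\le 0$; meanwhile $Q(u)\ge 0$. Since $u$ and $v$ both take values in the region $\{z\ge\alpha\}$ and $Q$ restricted to arguments in that region is a fixed uniformly elliptic quasilinear operator in the structural sense required by the comparison principle (Theorem 10.1 of \cite{GT} applies once the $u$-dependence is confined to a compact $z$-range and the ellipticity/monotonicity of the zeroth-order term in $-z$ is noted — the term $(a+d)e^{-2z\tr(A)}$ is monotone in $z$, which is exactly the sign condition needed), we conclude $u\le v$ on $\Omega^+$, a contradiction unless $\Omega^+=\emptyset$. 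Hence $u\le v\le \max\{\sup_{\partial\Omega}u,\alpha\}+C$ everywhere, which is \eqref{eqHE}. The main obstacle I anticipate is precisely handling the $u$-dependence of the principal coefficients: unlike the constant-coefficient case, one cannot blindly cite \cite{GT}, so the argument must be arranged so that both $u$ and the barrier provably live in the same horizontal slab \emph{before} comparison is invoked — this is what dictates the appearance and role of $\alpha$, and it is why the lower bound on the slab is $\alpha$ rather than $0$.

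\textbf{The optimality statement.} For the "most efficient choice" claim, two elementary observations suffice. If $\alpha > \alpha_0 := \sup_{\partial\Omega}u$, then $\max\{\sup_{\partial\Omega}u,\alpha\} = \alpha$, so \eqref{eqHE} reads $\sup_\Omega u \le \alpha + C(\diam\Omega,\alpha)$, which bounds $\sup_\Omega u$ in terms of $\alpha$ alone and says nothing comparing interior and boundary maxima — any information about $\sup_\Omega u - \sup_{\partial\Omega}u$ is lost. If $\alpha < \alpha_0$, then $\max\{\sup_{\partial\Omega}u,\alpha\} = \alpha_0$ in either case, so the left side and the $\max$ term are unchanged, but the slab $[\alpha,\alpha+C]$ needed in the barrier construction is wider (it must reach down to $\alpha < \alpha_0$ while still covering the heights actually attained, which start at $\alpha_0$), forcing worse continuity bounds $\lambda,\Lambda,K$, hence a larger $\mu$ and a larger $\max\psi$; tracing the construction shows $C$ is a nondecreasing function of the slab width, so $C(\alpha) > C(\alpha_0)$ whenever $\alpha<\alpha_0$. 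Both points are immediate once the explicit dependence of $C$ on the slab in the barrier argument is recorded, so no extra machinery is needed.
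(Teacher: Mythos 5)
Your plan has a genuine gap, and it sits precisely at the place you flag as "the main obstacle." You propose to absorb the $u$-dependence of the principal coefficients by confining everything to a slab $\{\alpha \le z \le \alpha + C\}$ and then using continuity to bound $Q_{22}(z), G_i(z), e^{-2z\tr A}$ there. But the set on which the barrier comparison must hold, $\Omega^+ = \{u > v\}$, only forces $u \ge v \ge \alpha$; it gives no upper bound on $u$, which is exactly what the theorem is asserting. So the coefficients $Q_{ij}(u(x)), G_i(u(x)), e^{-2u(x)\tr A}$ appearing in $Q(u)$ on $\Omega^+$ are not controlled, and the slab argument is circular. The fixed-point remark (choose $M$ consistently with $\max\psi$) addresses the height of the barrier $v$, not the height of $u$, so it does not close this loop. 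The paper's proof avoids the issue entirely by proving a \emph{global} statement (Claim~\ref{cl1} and the estimate \eqref{G1Q22}): the ratios $G_1(z)/Q_{22}(z)$ and $e^{-2z}/Q_{22}(z)$ are bounded uniformly in $z \in \R$, with the bounds $\Lambda, 1/\lambda$ depending only on $A$. This is the nontrivial heart of the theorem (it requires the three-way case analysis on $a^2+bc$), and your proposal contains no substitute for it.

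A second, related issue is your invocation of the comparison principle. GT Theorem 10.1 requires the principal coefficients $a^{ij}(x,z,p)$ to be \emph{independent of $z$}; the operator $Q$ has $Q_{ij}(z)$ and $G_i(z)$ in its coefficients, which is precisely why the paper states in the introduction that "the comparison principle ... does not apply." Confining $z$ to a slab does not fix this — the structural hypothesis is $\partial_z a^{ij} = 0$, not smallness. The paper's solution is the definition of the auxiliary operator $R$ (equations \eqref{R1}--\eqref{R4}): it divides $Q$ by $Q_{22}(u)$ and, crucially, evaluates the troublesome coefficients at the fixed function $u(x)$, so that, as an operator on the unknown $w$, $R$ has $a^{ij}$ depending only on $(x, Dw)$ and a $b(x,z,p)$ whose only $z$-dependence is through the decreasing factor $e^{-2w}$. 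Then GT 10.1 applies cleanly to compare $u$ (with $R(u) = Q(u)/Q_{22}(u) \ge 0$) and $v_0$. Your proposal never performs this normalization, so the comparison step is not justified even granting the slab bounds. Finally, a smaller point: your explicit barrier $\psi(x) = \mu^{-1}(e^{\mu D} - e^{\mu x})$ has $(\psi')^2/|\psi''| = e^{\mu x}/\mu$, which is \emph{not} small near $x = D$ when $\mu$ is large, so the first-order term $G_1(v)(\psi')^2$ cannot be dominated by $Q_{22}(v)\psi''$ there; the paper's $\psi = \ln(\ell x)/L$ gives $(\psi')^2/|\psi''| = 1/L$, a constant you can tune downward, which is why that shape is the right one.
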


\begin{remark} We notice that $Q(u) \geq 0$ has the geometric meaning that the $\pi-$graph of $u$ has mean curvature vector pointing upwards. Also, we remark that Theorem~\ref{oscilation} below is where we justify the sharp choice of $\alpha$ as $\alpha_0 = \sup_{\partial\Omega}u$. We chose to enunciate the theorem with a \emph{free} $\alpha$ instead of taking directly $\alpha = \alpha_0$ in order to take limits of $C(\alpha)$ when $\abs{\alpha}\rightarrow + \infty$, with the liberty of having the function $u$ fixed.
\end{remark}

\begin{proof}[Proof of Theorem~\ref{main}]

First, we notice that when $\tr(A) \leq 0$, the result is trivial with $C = 0$ and without the need for an $\alpha$, by Lemma \ref{tracelemma}. Thus we will suppose that $\tr(A) > 0$ and focus on the non unimodular case. Without loss of generality, after a homothety of the metric we may assume that $\tr(A) = 2$ and that $A$ is given by

\begin{equation}\label{Anotuni}
A = \Md{1+a}{b}{c}{1-a},
\end{equation}

\noindent for some $a,\,b,\,c \in \R$. Now, we will divide the proof in two cases, starting when $A$ is not a diagonal matrix:

\vspace{0.2cm}

\noindent \textbf{Case 1.} First, we suppose that $A$ is not a diagonal matrix. We will define a quasilinear operator $R$ related with $Q$ that satisfy the hypothesis of the comparison principle (for instance Theorem 10.1 of \cite{GT}) and find an \emph{ad hoc} positive function $v:\Omega\rightarrow \R$, whose construction will depend only on $\Omega$ and $\alpha$ such that $R(v+\max\{\sup_{\partial \Omega}u,\,\alpha\}) \leq R(u)$. Then our constant $C$ will be simply given by $C = \sup_{\Omega} v$. We begin by proving the following key claim, which will also be used on Section~\ref{secScherk}:

\begin{claim}\label{cl1} Let the functions $Q_{ij}$ be the ones defined on \eqref{Qij} with respect to the matrix $A$ of \eqref{Anotuni}, where either $b\neq 0 $ or $c \neq 0$. Then, there is some $\lambda > 0$ such that at least one of the following hold, for every $z \in \R$:

\begin{itemize}
\item[i.] $Q_{22}(z)e^{2z} > \lambda$;
\item[ii.] $Q_{11}(z)e^{2z} > \lambda$.
\end{itemize}

Moreover, if $a^2 + bc \leq 0$, both i. and ii. hold, and if $a^2+ bc >0$, then $b\neq 0$ is equivalent to i. and $c\neq 0$ is equivalent to ii. 
\end{claim}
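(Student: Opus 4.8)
The plan is to factor out the scaling coming from $\tr(A)=2$. I would write $A=I+A_0$ with $A_0=\Md{a}{b}{c}{-a}$ trace-free; since $I$ commutes with $A_0$ we have $e^{Az}=e^{z}e^{A_0 z}$, so writing $e^{A_0 z}=\Md{b_{11}(z)}{b_{12}(z)}{b_{21}(z)}{b_{22}(z)}$ gives $a_{ij}(z)=e^{z}b_{ij}(z)$, and then the definitions \eqref{Qij} collapse to
\begin{equation*}
e^{2z}Q_{22}(z)=b_{11}(z)^2+b_{12}(z)^2=:f(z),\qquad e^{2z}Q_{11}(z)=b_{21}(z)^2+b_{22}(z)^2=:g(z),
\end{equation*}
the squared Euclidean norms of the two rows of $e^{A_0 z}$. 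Since $\det e^{A_0 z}=e^{z\tr(A_0)}=1$, the matrix $e^{A_0 z}$ is invertible for all $z$, so $f>0$ and $g>0$ on $\R$; and multiplying the identity $Q_{11}(z)Q_{22}(z)-Q_{12}(z)^2=e^{-2z\tr(A)}=e^{-4z}$ by $e^{4z}$ yields $f(z)g(z)=1+e^{4z}Q_{12}(z)^2\ge 1$, hence $\max\{f(z),g(z)\}\ge 1$ for every $z$. This already proves the first assertion with $\lambda=\tfrac12$; at the end $\lambda$ will be shrunk below the finitely many positive bounds produced in the other cases so that it witnesses all the refined statements as well.

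Next I would compute $e^{A_0 z}$ explicitly from the Cayley--Hamilton relation $A_0^2=(a^2+bc)I$ and split on the sign of $a^2+bc$. If $a^2+bc<0$, then with $\omega=\sqrt{-(a^2+bc)}$ one has $e^{A_0 z}=\cos(\omega z)I+\omega^{-1}\sin(\omega z)A_0$, which is $\tfrac{2\pi}{\omega}$-periodic in $z$; thus $f$ and $g$ are continuous, positive, periodic functions, hence bounded below by positive constants, so both (i) and (ii) hold. If $a^2+bc=0$, then $A_0$ is nilpotent and $e^{A_0 z}=I+zA_0$, so $f(z)=(1+az)^2+b^2z^2$ and $g(z)=c^2z^2+(1-az)^2$ are strictly convex quadratics in $z$ (or constantly $1$ when $b$, resp.\ $c$, vanishes, which in this case forces $a=0$ as well); a one-line minimization gives $\inf_{\R}f=b^2/(a^2+b^2)>0$ if $b\neq 0$ and $\inf_{\R}g=c^2/(a^2+c^2)>0$ if $c\neq 0$, so again both (i) and (ii) hold.

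The remaining, and essentially only substantial, case is $a^2+bc>0$. Setting $\mu=\sqrt{a^2+bc}>0$ one has $e^{A_0 z}=\cosh(\mu z)I+\mu^{-1}\sinh(\mu z)A_0$, so in particular $b_{12}(z)=(b/\mu)\sinh(\mu z)$ and $b_{11}(z)=\cosh(\mu z)+(a/\mu)\sinh(\mu z)$, and symmetrically for the second row. If $b\neq 0$, then $\abs{b_{12}(z)}\to\infty$ as $\abs z\to\infty$, so $f(z)\ge b_{12}(z)^2\to\infty$; being continuous and strictly positive on $\R$, $f$ then satisfies $\inf_{\R}f>0$, i.e.\ (i) holds. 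If $b=0$, then $a^2=a^2+bc=\mu^2$, so $a\neq 0$, $\mu=\abs a$, $b_{12}\equiv 0$ and $b_{11}(z)=\cosh(\mu z)\pm\sinh(\mu z)=e^{az}$ (the sign being that of $a$), whence $f(z)=e^{2az}\to 0$ at one end and (i) fails for every $\lambda>0$. This shows (i) is equivalent to $b\neq 0$; running the same argument on $g$, with $b_{21}(z)=(c/\mu)\sinh(\mu z)$ and $b_{22}(z)=\cosh(\mu z)-(a/\mu)\sinh(\mu z)$, shows (ii) is equivalent to $c\neq 0$. The first assertion is then immediate in this case too: by hypothesis $b\neq 0$ or $c\neq 0$, so (i) or (ii) holds for every $z$ with a single $\lambda$.

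I expect the only real content to lie in the case $a^2+bc>0$: the point to extract is that a nonvanishing off-diagonal entry of $A_0$ forces one full row of $e^{A_0 z}$ to blow up at both ends, while $\det e^{A_0 z}=1$ forbids that row from ever vanishing, and the two facts together trap $f$ (or $g$) uniformly away from $0$; conversely, zeroing that entry collapses the row to a bare exponential that decays at one end. Beyond organizing the three regimes for $a^2+bc$ and taking a little care with the degenerate nilpotent sub-case, every estimate involved is a short elementary computation, so I do not anticipate a genuine obstacle past the bookkeeping.
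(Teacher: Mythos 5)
Your proof is correct and follows essentially the same route as the paper: the same decomposition $A = I + A_0$, the same reduction to the row norms $f,g$ of $e^{A_0 z}$, and the same split on the sign of $a^2+bc$ (periodic / quadratic / hyperbolic). One nuance worth flagging: the product identity $f(z)g(z) = 1 + e^{4z}Q_{12}(z)^2 \ge 1$ (a nice observation, not used in the paper) only yields the \emph{pointwise} disjunction $\max\{f(z),g(z)\}\ge 1$ for each $z$, whereas the Claim -- as it is actually invoked in the proof of Theorem~\ref{main}, where one of i.\ or ii.\ is assumed to hold and $Q_{22}$ (resp.\ $Q_{11}$) is used as a global divisor in defining the operator $R$ -- is the \emph{global} disjunction that either $f$ is bounded away from $0$ on all of $\R$ or $g$ is. So the sentence ``this already proves the first assertion'' overstates what that inequality gives; the global statement is only obtained, as in the paper, from your subsequent three-case analysis, which you do carry out correctly.
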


\vspace{0.2cm}

\noindent \emph{Proof of Claim \ref{cl1}.} We are going to prove Claim~\ref{cl1} in each of three (family of) possibilities to the exponential of $A$. First, we write $A = I + A_0$, where $I$ is the identity matrix and $A_0$ is the traceless part of $A$ given by

\begin{equation*}
A_0 = \Md{a}{b}{c}{-a}.
\end{equation*}

As $I$ and $A_0$ commute, we obtain that $e^{Az} = e^{Iz+A_0z} = e^{Iz}e^{A_0z}$, thus

\begin{equation*}
e^{Az} = e^z\Md{a_{11}^0(z)}{a_{12}^0(z)}{a_{21}^0(z)}{a_{22}^0(z)},
\end{equation*}

\noindent where we denote by $a_{ij}^0(z)$ the coefficients of the exponential $e^{A_0z}$. In particular, we obtain that $a_{ij}(z) = e^za_{ij}^0(z)$, and it follows that

\begin{equation*}
Q_{11}(z)e^{2z} = e^{-4z}\left[a_{21}(z)^2+a_{22}(z)^2\right]e^{2z} = a_{21}^0(z)^2+a_{22}^0(z)^2,
\end{equation*}

\noindent and analogously 

\begin{equation*}
Q_{22}(z)e^{2z} = a_{11}^0(z)^2+a_{12}^0(z)^2.
\end{equation*}

We observe that the characteristic equation of $A_0$ is given by $0 = \det(A_0-tI) = t^2 - (a^2+bc)$, so if we denote by $d = \sqrt{\vert a^2+bc\vert}$, the exponential of $A_0$ is given by\footnote{We remark that the constant $a^2+bc$ is linked with the Milnor $D-$invariant of $\rar$, which is (following \cite{MP}) defined by $D = \det(A) = 1-(a^2+bc)$. So each case $a^2+bc>0,\,a^2+bc = 0 $ and $a^2+bc< 0 $ is in correspondence with $D < 1,\,D=1$ and $D > 1$, respectively.}

\begin{equation}\label{dmen0}
e^{A_0z} = \Md{\cos(dz)+\frac{a}{d}\sin(dz)}{\frac{b}{d}\sin(dz)}{\frac{c}{d}\sin(dz)}{\cos(dz)-\frac{a}{d}\sin(dz)}, \text{ when }a^2+bc < 0,
\end{equation}

\begin{equation}\label{dig0}
e^{A_0z} = \Md{1+az}{bz}{cz}{1-az}, \text{ when }a^2+bc = 0,
\end{equation}

\begin{equation}\label{dmai0}
e^{A_0z} = \Md{\cosh(dz)+\frac{a}{d}\sinh(dz)}{\frac{b}{d}\sinh(dz)}{\frac{c}{d}\sinh(dz)}{\cosh(dz)-\frac{a}{d}\sinh(dz)}, \text{ when }a^2+bc > 0.
\end{equation}

Now we let $f(z) = a_{11}^0(z)^2+a_{12}^0(z)^2$ and $g(z) = a_{21}^0(z)^2+a_{22}^0(z)^2$ and prove that there is some $\lambda > 0$ such that either $f(z)>\lambda$ or $g(z) > \lambda$. First, we notice that both $f(z)$ and $g(z)$ are always positive, as the existence of some $z_0 \in \R$ such that $f(z_0) = 0$ or $g(z_0) = 0$ would imply that $\det(e^{A_0z_0}) = 0$, an absurdity. Then, we just need to check the behaviour of $f$ and $g$ at $\pm\infty$.

\vspace{0.1cm}

First, if it was $a^2+bc < 0$, the existence of $\lambda$ as claimed follows directly from the fact that both $f$ and $g$ are periodic and positive, by \eqref{dmen0}. If it was $a^2+bc = 0$, then we have that $f$ and $g$ are given by

\begin{eqnarray*}
f(z) & = & (1+az)^2+(bz)^2 = (a^2+b^2)z^2+2az+1\\
g(z) & = & (1-az)^2+(cz)^2 = (a^2+c^2)z^2-2az+1,
\end{eqnarray*}

\noindent both strictly positive at infinity for any choice of $a,\,b,\,c$, so we also have the existence of $\lambda$ on this case. Finally, if $a^2+bc > 0$, $f$ and $g$ would be given by

\begin{eqnarray*}
f(z) & = & \left(\cosh(dz)+\frac{a}{d}\sinh(dz)\right)^2+\left(\frac{b}{d}\sinh(dz)\right)^2\\
g(z) & = & \left(\cosh(dz)-\frac{a}{d}\sinh(dz)\right)^2+\left(\frac{c}{d}\sinh(dz)\right)^2.
\end{eqnarray*}

If $i.$ was not true, either $\lim_{z\rightarrow -\infty} f(z) = 0$ or $\lim_{z\rightarrow +\infty} f(z) = 0$, so it would follow that $b = 0$. Analogously if $\lim_{z\rightarrow -\infty} g(z) = 0$ or $\lim_{z\rightarrow +\infty} g(z) = 0$, we would have $c = 0$. As $A$ is not a diagonal matrix, at least one between $i.$ and $ii.$ is true, finishing the proof of the claim. $\hfill \diamondsuit$

\vspace{0.3cm}

Now we proceed with the proof of the theorem by proving the existence of $\Lambda > 0$ such that $G_{1}(z)\leq \Lambda Q_{22}(z)$ and $G_{2}(z)\leq \Lambda Q_{11}(z)$. Just observe that, by definition,

\begin{eqnarray}
\frac{G_1(z)}{Q_{22}(z)} & = & \frac{e^{-4z}\left[(3+a)a_{11}(z)^2+(3-a)a_{12}(z)^2+(b+c)a_{11}(z)a_{12}(z)\right]}{e^{-4z}\left[a_{11}(z)^2+a_{12}(z)^2\right]}\nonumber\\
& = & 3+a\frac{a_{11}(z)^2-a_{12}(z)^2}{a_{11}(z)^2+a_{12}(z)^2}+(b+c)\frac{a_{11}(z)a_{12}(z)}{a_{11}(z)^2+a_{12}(z)^2}\nonumber\\
& \leq & 3 + \vert a \vert + \frac{\vert b+c \vert}{2} = \Lambda, \label{G1Q22}
\end{eqnarray}

\noindent and, mutatis mutandis, the same estimate holds for the quotient $G_2(z)/Q_{11}(z)$.

\vspace{0.1cm}

Finally we can finish the proof of the theorem when $A$ is not a diagonal matrix, using the existence of $\lambda$ and $\Lambda$ as before. First, we assume that i. holds and let $u:\Omega \rightarrow \R$ be any function that satisfy $Q(u)\geq 0$. Then we define the quasilinear elliptic operator $R$ as

\begin{eqnarray}
R(w)\hspace{-0.3cm} & = &\hspace{-0.3cm} w_{xx}\hspace{-0.1cm}\left(\frac{Q_{22}(u)+w_y^2}{Q_{22}(u)}\right)+ w_{yy}\left(\frac{Q_{11}(u)+w_x^2}{Q_{22}(u)}\right) + 2 w_{xy}\left(\frac{Q_{12}(u)-w_xw_y}{Q_{22}(u)}\right)\nonumber\\
&&\hspace{-0.3cm} + \frac{G_1(u)}{Q_{22}(u)}w_x^2+ \frac{G_2(u)}{Q_{22}(u)}w_y^2+ \frac{G_3(u)}{Q_{22}(u)}w_xw_y+2\frac{e^{-2u}}{Q_{22}(u)}e^{-2w},\label{R1}
\end{eqnarray}

\noindent and notice that $R(u) = Q(u)/Q_{22}(u) \geq 0$.

Now, as $\Omega$ is a bounded domain, after a horizontal translation (which is an isometry of the ambient space) we may suppose without loss of generality that it is contained on a strip

\begin{equation*}
\Omega \subseteq \{(x,y,0) \in \rar;\,1 < x < M\},
\end{equation*}

\noindent for some $M > 1$. We let $v(x,y) = \ln(l x)/L$, where $l,\,L > 0$ are constants yet to be defined. Then, if $\alpha \in \R$ is any a priori chosen number, we have that

\begin{eqnarray*}
R(v+\alpha) & = & v_{xx}+\frac{G_1(u)}{Q_{22}(u)}v_x^2+2\frac{e^{-2u}}{Q_{22}(u)}e^{-2(v+\alpha)}\\
& < & v_{xx} +\Lambda v_x^2 + \frac{2}{\lambda}e^{-2v}e^{-2\alpha}.
\end{eqnarray*}

Then, using that $v_{x} = \frac{1}{Lx}$ and $v_{xx} = \frac{-1}{Lx^2}$, we obtain

\begin{eqnarray}
R(v+\alpha) & < & -\frac{1}{Lx^2}+\Lambda\frac{1}{L^2x^2}+\frac{2}{\lambda e^{2\alpha}}(lx)^{-2/L}\label{Rvalpha}\\
& = & \left.\left.\frac{1}{Lx^2}\right[-1+\frac{\Lambda}{L}+\frac{2L}{\lambda e^{2\alpha}l^{2/L}}x^{(2L-2)/L}\right].\nonumber
\end{eqnarray}

Now, take $L = 1 + \Lambda$. As $1<x<M$, follows that

\begin{equation}\label{Rvalpha2}
R(v+\alpha) < \left.\left.\frac{1}{(1+\Lambda)x^2}\right[-\frac{1}{1+\Lambda}+2\frac{1+\Lambda}{\lambda e^{2\alpha}l^{\frac{2}{1+\Lambda}}}M^{\frac{2\Lambda}{1+\Lambda}}\right],
\end{equation}

\noindent and then we just choose $l$ big enough (in particular we may assume $l\geq 1$, so $v> 0 $) such that

\begin{equation}\label{defC}
-\frac{1}{1+ \Lambda} + 2\frac{1+\Lambda}{\lambda e^{2\alpha}l^{\frac{2}{1+\Lambda}}}M^{\frac{2\Lambda}{1+\Lambda}} < 0,
\end{equation}

\noindent so $R(v+\alpha) < 0$. We remark that the choice of $l$ and $L$ as above depends only on $\lambda, \Lambda,\,\alpha$ and $M$, so it does not depend on $u$. Let 

$$v_0 = v+\max\left\{\sup_{\partial \Omega } u,\,\alpha\right\},$$

\noindent so $v_0 \geq v+\alpha$, and it follows from the definition of $R$ (and from the fact that $v_0-v$ is constant) that

\begin{equation*}
R(v_0) \leq R(v+\alpha) < 0 \leq R(u),
\end{equation*}

\noindent then, as $R$ satisfies the hypothesis of the Comparison Principle and $v_0 \geq u$ on $\partial \Omega$, follows that $\sup_{\Omega} u \leq \sup_{\Omega} v_0$. Finally, we set $C = \sup_{\Omega} v$, and the theorem follows when $A$ is not diagonal and i. holds. The other cases will follow analogously, but we write the main steps of their proofs.

\hspace{0.1cm}

Still on Case 1 of $A$ not being a diagonal matrix, if i. was not true, then $b = 0$, $c \neq 0$ and we assume ii. to define

\begin{eqnarray}
R(w)\hspace{-0.3cm} & = & \hspace{-0.3cm} w_{xx}\hspace{-0.1cm}\left(\frac{Q_{22}(u)+w_y^2}{Q_{11}(u)}\right)+ w_{yy}\left(\frac{Q_{11}(u)+w_x^2}{Q_{11}(u)}\right) + 2 w_{xy}\left(\frac{Q_{12}(u)-w_xw_y}{Q_{11}(u)}\right)\nonumber\\
&& \hspace{-0.3cm} + \frac{G_1(u)}{Q_{11}(u)}w_x^2+ \frac{G_2(u)}{Q_{11}(u)}w_y^2+ \frac{G_3(u)}{Q_{11}(u)}w_xw_y+2\frac{e^{-2u}}{Q_{11}(u)}e^{-2w}.\label{R2}
\end{eqnarray}

From here, just proceed analogously as on the case where $b \neq 0$, by letting $v(x,y) = \ln(ly)/L$, and this finishes the proof on the first case.

\vspace{0.2cm}

It remains to prove the theorem when $A$ is diagonal (then neither i. nor ii. hold), so $A$ is of the form

\begin{equation*}
A = \Md{1+a}{0}{0}{1-a},
\end{equation*}

\noindent then $a_{11}(z) = e^{(1+a)z}$, $a_{22}(z) = e^{(1-a)z}$ and $a_{12}(z) = a_{21}(z) = 0$. It follows that the operator $Q$ is given by

\begin{eqnarray*}
Q(u) & = & u_{xx}\left(e^{-2(1-a)u}+u_y^2\right)+u_{yy}\left(e^{-2(1+a)u}+u_x^2\right) - 2u_{xy}\left(u_xu_y\right)\\
&&+(3+a)e^{-2(1-a)u}u_x^2+(3-a)e^{-2(1+a)u}u_y^2+2e^{-4u}.
\end{eqnarray*}

\noindent If $a \geq 0$ we define $R$ as the operator 

\begin{eqnarray}
R(w)\hspace{-0.3cm} & = &\hspace{-0.3cm}w_{xx}\hspace{-0.1cm}\left(1+e^{2(1-a)u}w_y^2\right)\hspace{-0.1cm}+\hspace{-0.1cm}w_{yy}\hspace{-0.1cm}\left(e^{-4au}+e^{2(1-a)z}u_x^2\right) - 2w_{xy}\hspace{-0.1cm}\left(e^{2(1-a)z}w_xw_y\right)\nonumber\\
&&\hspace{-0.3cm}+(3+a)w_x^2+(3-a)e^{-4au}w_y^2+2e^{-2(1+a)w},\label{R3}
\end{eqnarray}

\noindent and if $a < 0$, $R$ will be defined as

\begin{eqnarray}
R(w)\hspace{-0.3cm} & = & \hspace{-0.3cm}w_{xx}\hspace{-0.1cm}\left(e^{4au}+e^{2(1+a)u}w_y^2\right)\hspace{-0.1cm}+\hspace{-0.1cm}w_{yy}\hspace{-0.1cm}\left(1+e^{2(1+a)u}w_x^2\right) - 2w_{xy}\hspace{-0.1cm}\left(e^{2(1+a)u}w_xw_y\right)\nonumber\\
&&\hspace{-0.3cm}+(3+a)e^{4au}w_x^2+(3-a)w_y^2+2e^{-2(1-a)w}.\label{R4}
\end{eqnarray}

Now, we just set $v$ to be again $v(x,y) = \ln(lx)/L$ when $a\geq0$ and $v(x,y) = \ln(ly)/L$ when $a < 0$ and, as the term on both operators which contains no derivative terms in decreasing on $u$, the proof will follow as in the previous case, using $\Lambda = 3+\vert a \vert$ and $\lambda = 1$.\end{proof}

On the next theorem we prove that the dependence on $\alpha$ cannot be removed, on the sense that the existence of a constant which does not depend on $\alpha$ is not possible. Precisely, we prove:

\begin{theorem}\label{thmabs}
Let $A$ be a matrix as in \eqref{Anotuni} and let $X = \rar$ be a non-unimodular semidirect product endowed with its canonical left invariant metric. Let $\Omega \subseteq \raz$ be a bounded, convex domain. Then, for every constant $C >0$ there exists some function $u:\Omega \rightarrow \R$ satisfying $Q(u) = 0$ and also

\begin{equation}\label{eqthmabs}
\sup_{\Omega} u > \sup_{\partial \Omega} u + C.
\end{equation}

\end{theorem}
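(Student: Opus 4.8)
The plan is to construct, for any prescribed $C>0$, an explicit minimal $\pi$-graph with large interior-versus-boundary jump by exploiting the fact that vertical translations are \emph{not} isometries when $\tr(A)>0$. More precisely, I would play the parameter $\alpha$ in Theorem~\ref{main} against itself: the constant $C(\alpha)$ there was built from $v(x,y)=\ln(lx)/L$ with $l$ chosen to satisfy \eqref{defC}, so $C(\alpha)=\sup_\Omega v$ blows up as $\alpha\to-\infty$. This suggests that the extremal graphs realizing large oscillation are those assuming a very negative constant $n$ on $\partial\Omega$, which is precisely the regime where Theorem~\ref{oscilation} asserts the oscillation tends to $+\infty$. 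So the cleanest route is to first establish (or invoke) the solvability of the Dirichlet problem $Q(u)=0$ on the convex domain $\Omega$ with boundary value $u|_{\partial\Omega}=n$ for every $n\in\R$, and then to show $\mathrm{osc}_\Omega u_n=\sup_\Omega u_n - n \to +\infty$ as $n\to-\infty$.

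For the existence of solutions with constant boundary data $n$, I would use the continuity/Perron method for the quasilinear elliptic operator $Q$: since $\Omega$ is bounded and convex (hence satisfies an exterior sphere / bounded-curvature boundary condition), one needs uniform a priori $C^1$ and $C^{2,\beta}$ estimates. The height bound from above is exactly Theorem~\ref{main}; a height bound from below follows from Lemma~\ref{tracelemma} applied with the opposite orientation, or by comparing with a large horizontal region, giving $n-K\le u_n\le n+C$ for constants depending only on $\Omega$. Boundary gradient estimates come from constructing upper and lower barriers near $\partial\Omega$ out of the same logarithmic-type functions used in the proof of Theorem~\ref{main} (translated vertically to match the boundary value $n$), and interior gradient estimates then follow from the structure conditions on $Q$; Schauder theory upgrades these to the compactness needed to close the continuity argument.

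The core computation — and the main obstacle — is the lower bound on the oscillation. Here I would normalize as in the proof of Theorem~\ref{main} ($\tr(A)=2$, $A$ as in \eqref{Anotuni}) and look at the zeroth-order term $(a+d)e^{-2u\tr(A)}=2e^{-4u}$ in $Q(u)$: when the boundary value $n$ is very negative, $e^{-4u}$ is enormous near $\partial\Omega$, which forces the graph to bend strongly upward, producing a large interior bump. Quantitatively, I would build a \emph{subsolution} $\underline{u}$ with $\underline{u}|_{\partial\Omega}\le n$ and $\sup_\Omega\underline{u}\ge n+C$: take $\underline{u}(x,y)=n+\phi(x,y)$ where $\phi\ge 0$ is a fixed bump supported in $\Omega$ with $\sup\phi\ge C$, and check $Q(\underline{u})\ge 0$ for $n$ sufficiently negative — this works because the non-derivative term $2e^{-4n}e^{-4\phi}$ dominates the (fixed, $n$-independent) second-derivative and gradient contributions of $\phi$, all other coefficients $Q_{ij}(\underline u)$, $G_i(\underline u)$ carrying the same favorable exponential weight $e^{-4n}$ factored out. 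By the comparison principle for $Q$ restricted to a suitable operator (as in Case~1 of Theorem~\ref{main}, dividing by $Q_{22}(u)$ or $Q_{11}(u)$), $u_n\ge\underline u$, whence $\sup_\Omega u_n\ge n+C$, i.e.\ \eqref{eqthmabs}. The delicate point is making the comparison rigorous despite the solution-dependent top-order coefficients: one must fix the coefficients at $u=\underline u$ (or $u=u_n$) and verify the resulting linear operator still satisfies Theorem~10.1 of \cite{GT}, exactly the device already used in the proof of Theorem~\ref{main}. If instead one wants to avoid the full Dirichlet existence theory, an alternative is to run the argument directly on Theorem~\ref{oscilation}: that theorem already produces a family $u_c$ with $u_c|_{\partial\Omega}=c$ and $\mathrm{osc}_\Omega u_c\to+\infty$ as $c\to-\infty$, so picking $c$ negative enough that $\mathrm{osc}_\Omega u_c>C$ yields the desired $u=u_c$ immediately, reducing the proof of Theorem~\ref{thmabs} to a one-line consequence of the oscillation theorem.
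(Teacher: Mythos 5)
Both routes you sketch run into trouble, and neither matches the paper's actual argument.

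Your alternative (b) — deducing the theorem as a ``one-line consequence of Theorem~\ref{oscilation}'' — is circular in the logical structure of the paper. The claim in Theorem~\ref{oscilation} that $\mathrm{osc}_\Omega(u_k)\to+\infty$ as $k\to-\infty$ is itself proved by the very technique of Theorem~\ref{thmabs}; the proof of Theorem~\ref{oscilation} explicitly says ``the second part of the claim follows like on the proof of Theorem~\ref{thmabs}.'' So you cannot invoke Theorem~\ref{oscilation} to prove Theorem~\ref{thmabs} without first proving Theorem~\ref{thmabs} by some independent means.

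Your main route (a), the explicit subsolution $\underline u = n+\phi$, contains a genuine gap. The claim that the zeroth-order term $2e^{-4\underline u}$ dominates because ``all other coefficients $Q_{ij}(\underline u),G_i(\underline u)$ carry the same favorable exponential weight $e^{-4n}$'' is not correct for all $A$. From \eqref{Qij} with $\tr(A)=2$, writing $a_{ij}(z)=e^{z}a_{ij}^0(z)$ one has $Q_{ij}(z)=e^{-2z}\bigl[(a^0)^2+\cdots\bigr]$, and when $a^2+bc>0$ the bracket grows like $e^{2d|z|}$ with $d=\sqrt{a^2+bc}$ as $z\to-\infty$. Hence $Q_{ij}(n+\phi)$ grows like $e^{2(1+d)|n|}$ while the zeroth-order term grows like $e^{4|n|}$. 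Once $d\geq 1$, i.e. $\det A=1-(a^2+bc)\leq0$, the second-order coefficients blow up at least as fast as the zeroth-order term, and $Q(\underline u)\geq0$ cannot be forced near the interior maximum of $\phi$, where necessarily $\phi_{xx}\leq 0$. The obstruction survives normalization by $Q_{22}$ (or $Q_{11}$): the normalized zeroth-order coefficient is $e^{-2u_n}/Q_{22}(u_n)=1/\bigl(Q_{22}(u_n)e^{2u_n}\bigr)$, which tends to $0$ as $u_n\to-\infty$ except when $a^2+bc<0$, so the positive zeroth-order contribution does not blow up and cannot swallow the term $\phi_{xx}$. Your argument thus works only for $\det A>0$ (and trivially in the periodic case $a^2+bc<0$), but not for the non-unimodular groups with $\det A\leq0$, which the theorem also covers. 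There is also a secondary technical issue you flag but do not resolve: the comparison of $\underline u$ with $u_n$ through a frozen-coefficient operator $R$ needs $R(\underline u)\geq R(u_n)$, and freezing at $u_n$ means $R(\underline u)\neq Q(\underline u)/Q_{22}(\underline u)$, so the sign of $Q(\underline u)$ alone does not suffice; one has to redo the estimate inside $R$ as the paper does for $v(x,y)=\ln(lx)/L$, a function of a single variable.

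The paper's proof is a compactness argument that sidesteps all of this. It solves (via Theorem 15.1 of \cite{MMPR3}) the Dirichlet problem $Q(u_n)=0$ on a small ball $B_r\subseteq\Omega$ with constant boundary value $u_n|_{\partial B_r}=-z_n$, for a sequence $z_n\to+\infty$ and a marked point $p$ chosen in Claim~\ref{cl2} so that the dilated circles $e^{Az_n}(\sn1-p)$ satisfy the hypotheses of Theorem~\ref{15.4}. Applying the group left translation $L_{(0,0,z_n)}$ — an ambient isometry — brings each graph to height $0$ over $e^{Az_n}B_r$; Theorem~\ref{15.4} forces these translated minimal disks to converge on compacts to a vertical half-plane, which is unbounded in the $z$-direction. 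But if $\sup_\Omega u\leq\sup_{\partial\Omega}u+C$ held for all solutions, the translated graphs would be uniformly bounded above by $C$, a contradiction. This global blow-up argument works uniformly for every non-unimodular $A$, without the sign restriction on $\det A$ that your barrier construction implicitly requires.
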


The proof of Theorem \ref{thmabs} above is by contradiction and consists on using the vertical translation that rises from the group structure to translate a family of solutions tending to $-\infty$, all to height $0$. We prove that if Theorem \ref{thmabs} was false, such family would be uniformly bounded, and this would generate a contradiction with the following theorem, due to Meeks, Mira, P\'erez and Ros \cite{MMPR3}:

\begin{theorem}[Theorem 15.4, \cite{MMPR3}]\label{15.4} Let $X$ be a non-unimodular metric Lie group which is isomorphic and isometric to a semidirect product $\rar$, $A \in M_2(\R)$. Suppose that $\Gamma(n) \subseteq \raz$ is a sequence of $C^2$ simple closed convex curves with $e = (0,0,0) \in \Gamma(n)$ such that the geodesic curvatures of $\Gamma(n)$ converge uniformly to $0$ and the curves $\Gamma(n)$ converge on compact subsets to a line $L$ with $e \in L$ as $n\rightarrow \infty$. Then, for any sequence $M(n)$ of compact branched minimal disks with $\partial M(n) = \Gamma(n)$, the surfaces $M(n)$ converge $C^2$ on compact subsets as $n \rightarrow \infty$ to the vertical half plane $\pi^{-1}(L)\cap \left[\R^2\rtimes_A[0,\,\infty)\right]$.
\end{theorem}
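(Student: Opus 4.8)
The plan is to combine the vertical confinement supplied by Lemma~\ref{tracelemma} with curvature estimates for the minimal disks $M(n)$: the confinement squeezes the $M(n)$, near $e$, into an arbitrarily thin neighbourhood of the vertical half-plane $P_0:=\pi^{-1}(L)\cap\left[\R^2\rtimes_A[0,\,\infty)\right]$; the curvature estimates let us extract a $C^2_{\mathrm{loc}}$ limit; and that limit, being a minimal surface contained in $P_0$ with $\partial P_0$ as boundary, is forced to be all of $P_0$. First some reductions. Since the conclusion refers to $\R^2\rtimes_A[0,\,\infty)$ we are in the case $\tr(A)>0$ (the case $\tr(A)<0$ is symmetric under $z\mapsto -z$, and $\tr(A)=0$ is unimodular, hence excluded); after a homothety of the metric we may take $\tr(A)=2$ with $A$ as in \eqref{Anotuni}, and after the isometry \eqref{isoiso} followed by a horizontal translation we may assume $L=\{(x,0,0):x\in\R\}$. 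Because every vertical plane of $\rar$ is minimal, $P_0$ is a minimal surface with $\partial P_0=L$. Finally the confinement itself: each $\Gamma(n)$ lies in the horizontal slice $\raz=\{z=0\}$, so in the notation of Lemma~\ref{tracelemma} one has $c_0=c_1=0$; as $\tr(A)>0$, part~(1) of that lemma gives $M(n)\subseteq\pi^{-1}(D(n))\cap\{z\ge 0\}$, where $D(n)\subseteq\{z=0\}$ is the compact convex disk with $\partial D(n)=\Gamma(n)$.

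Next, compactness. Fix $R>0$. Since $D(n)\to L$ uniformly on compact sets, since the heights attained by $M(n)$ on the intrinsic ball $B_R(e)$ are bounded in terms of $R$ alone, and since the metric coefficients $Q_{ij}(z)$ of \eqref{Qij} are bounded for $z$ in any bounded interval, the set $M(n)\cap B_R(e)$ lies in an intrinsic $\varepsilon_n(R)$-neighbourhood of $P_0$ with $\varepsilon_n(R)\to 0$. Using that the geodesic curvatures of $\Gamma(n)=\partial M(n)$ tend uniformly to $0$, together with a curvature estimate for the disks $M(n)$ valid up to and including the boundary (available, e.g., via the interior and boundary curvature estimates for stable minimal surfaces in ambients of bounded geometry when the $M(n)$ are area-minimizing, and in the general branched case via the structure theory for minimal disks), one obtains a bound for the norm of the second fundamental form of $M(n)$ that is uniform on compact subsets of $X$, hence uniform local area bounds. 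Passing to a subsequence, $M(n)$ converges $C^2$ on compact subsets — possibly with finite multiplicity — to a minimal surface $\Sigma$ with $\partial\Sigma=L$ and $e\in\Sigma$.

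Now, identification. By the confinement, $\Sigma\subseteq P_0$. On the other hand $\Sigma$ is nonempty (it contains $e$, and has positive area in small balls centred on it by monotonicity) and is a two-dimensional minimal surface with $\partial\Sigma=L=\partial P_0$; hence $\mathrm{int}(\Sigma)$ is relatively open and relatively closed in the connected surface $\mathrm{int}(P_0)$, so $\mathrm{int}(\Sigma)=\mathrm{int}(P_0)$, and $\Sigma$ being closed forces $\Sigma=P_0$. Since this limit is independent of the subsequence, the whole sequence converges $C^2$ on compact subsets to $P_0=\pi^{-1}(L)\cap\left[\R^2\rtimes_A[0,\,\infty)\right]$, as claimed. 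Alternatively, one may sidestep the subsequence by first upgrading each $M(n)$ to an embedded vertical graph $\graph(u_n)$ over $D(n)$ through a Radó-type theorem (sliding the minimal vertical planes $\pi^{-1}(\ell)$, $\ell$ a line in $\{z=0\}$), so that $u_n\ge 0$, $u_n|_{\partial D(n)}=0$ and $Q(u_n)=0$, and then comparing $u_n$ on the thin strip occupied by $D(n)$ inside $B_R(e)$ with the explicit supersolutions $v_\delta(x,y)=\beta(\delta^2-y^2)$ of \eqref{Q} to get $0\le u_n=O(\mathrm{width}(D(n)\cap B_R(e)))\to 0$ on compacta.

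The hard part is the curvature and area estimate in the compactness step. The two structural obstructions stressed in the introduction — vertical translations are not isometries, and, because $\tr(A)\neq 0$, horizontal planes are not minimal — mean that the classical $\R^3$ toolkit (Radó via harmonicity of the coordinate functions, the half-space theorem, Schoen's curvature estimate) does not transfer verbatim, and these ingredients must be re-established in the semidirect-product setting or replaced by the minimal-disk structure theory of Meeks--Mira--P\'erez--Ros; once $C^2_{\mathrm{loc}}$ precompactness of the $M(n)$ is secured, the confinement via Lemma~\ref{tracelemma} and the identification of the limit with $P_0$ are comparatively soft.
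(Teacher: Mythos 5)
You should first note that the paper offers no proof of this statement at all: it is imported verbatim as Theorem 15.4 of \cite{MMPR3} (a result of Meeks, Mira, P\'erez and Ros) and used as a black box, so your attempt has to stand on its own --- and as it stands it has a genuine gap at its geometric core. The confinement step is wrong: Lemma~\ref{tracelemma} places $M(n)$ inside $\pi^{-1}(D(n))\cap\{z\geq 0\}$, but it is the boundary curves $\Gamma(n)=\partial D(n)$ that converge to $L$, not the disks $D(n)$. Convex disks whose boundaries open up to a line converge on compact sets to a \emph{half-plane} of $\raz$ (think of disks of radius $n$ centred at $(0,n,0)$ with $L$ the $x$-axis), so near $e$ the constraint region is a quarter-space of the form $\{y\geq 0\}\cap\{z\geq 0\}$, not a thin neighbourhood of the vertical half-plane $P_0=\pi^{-1}(L)\cap\left[\R^2\rtimes_A[0,\infty)\right]$. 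Consequently your key assertion ``$\Sigma\subseteq P_0$'' does not follow, and the identification of the limit --- which you call comparatively soft --- is in fact the whole content of the theorem: one must rule out every other possible limit inside that quarter-space (or no limit at all, with the surfaces drifting upward), which cannot be done by the confinement argument you give.

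The compactness step is likewise assumed rather than proved: the $M(n)$ are arbitrary compact \emph{branched} minimal disks, with no stability or area-minimizing hypothesis, so neither interior Schoen-type estimates nor boundary curvature estimates apply off the shelf; you explicitly defer this to ``the minimal-disk structure theory of Meeks--Mira--P\'erez--Ros,'' which is exactly where the actual proof resides in \cite{MMPR3}. Finally, your alternative graph-theoretic route is self-refuting: the width of $D(n)\cap B_R(e)$ does not tend to $0$ (it tends to the full scale $R$), and if your comparison argument really gave $u_n\to 0$ on compacta, the graphs would converge to the horizontal half-plane $\{z=0,\ y\geq 0\}$, contradicting the statement being proved and the way Theorem~\ref{15.4} is used in the proof of Theorem~\ref{thmabs}, where the oscillation of these graphs must blow up. Since horizontal planes have mean curvature $\tr(A)/2>0$, the graphs cannot remain near height $0$; the correct phenomenon is that they become vertical, and capturing that is precisely the hard analytic work your proposal leaves out.
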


\begin{proof}[Proof of Theorem \ref{thmabs}]

We begin by proving the following claim:

\begin{claim}\label{cl2} Let $\sn1 = \{(x,\,y)\in \R^2;\,x^2+y^2=1\}$ be the unit circle centred on the origin of $\R^2$. Let $A \in M_2(\R)$ be a matrix with $\tr(A) = 2$, as in \eqref{Anotuni}, and let $e^{Az}$ be its exponential map. Then there is a point $p \in \sn1$ and an increasing sequence $z_n \in (0,\,+\infty)$ such that $\Gamma_n = \left.\left.e^{Az_n}\right(\sn1-p\right)$ satisfies the hypothesis of Theorem \ref{15.4} at the origin, i.e., the geodesic curvature of $\Gamma_n$ at $0$ is converging to zero and $\Gamma_n$ is converging to a line $L$ on compacts, with $0\in L$.
\end{claim}

\noindent \emph{Proof of Claim \ref{cl2}.} We again denote by $A_0$ the traceless part of $A$ and observe that $e^{Az}= e^{z}e^{A_0z}$. Then we have that $e^{Az}\sn1 = e^z\left(e^{A_0z}\sn1\right)$ is a homothety by $e^z$ of the curve $e^{A_0z}\sn1$. Now we let $d = \sqrt{\vert a^2+bc\vert}$ and divide the proof on the three aforementioned cases given by equations \eqref{dmen0}, \eqref{dig0} and \eqref{dmai0}.

\vspace{0.1cm}

First, if $a^2+bc<0$, we let $p \in \sn1$ be any point and define $z_n = \frac{2n\pi}{d}$. Then $e^{A_0z_n} = \text{Id}$, so $e^{Az_n}\sn1$ is a circle of radius $e^{2z_n}$ centred at the origin, and $\Gamma_n = e^{Az_n}(\sn1 - p)$ is a circle through the origin with radius $e^{2z_n}$. As $z_n\rightarrow \infty$, $\Gamma_n$ will converge to a line $L$ through $0$ and the claim is proved on this case.

\vspace{0.1cm}

Secondly, if $a^2+bc = 0$, then $e^{A_0z}$ is given by \eqref{dig0} and $e^{A_0z}\sn1$ is an ellipse and the homotheties of an ellipse by $e^{n}$ admits a point where its geodesic curvature converges to zero and, after a translation, it converges to a line on compacts, proving the claim on the second case.

\vspace{0.1cm}

Finally, if $a^2+bc > 0$, $e^{A_0z}$ is given by \eqref{dmai0}. If $bc \neq 0$, then $d \neq \abs{a}$, and if $z$ is big enough we have that $\cosh(dz) \simeq e^{dz}/2$ and $\sinh(dz) \simeq e^{dz}/2$, so 

$$e^{A_0z} \simeq \frac{e^{dz}}{2d}\Md{d+a}{b}{c}{d-a},$$

\noindent and $e^{Az}\sn1$ is asymptotic to a homothety of $e^{(d+2)z}$ of an ellipse, which has the desired properties. The last case to be treated is when $d^2 = a^2+bc = a^2 > 0$, then

\begin{equation*}
e^{A_0z} = \Md{e^{dz}}{\frac{b}{d}\sinh(dz)}{\frac{c}{d}\sinh(dz)}{e^{-dz}}\simeq \frac{e^{dz}}{d}\Md{d}{b}{c}{de^{-2dz}},
\end{equation*}

\noindent and, for $z$ large enough it follows that $e^{A_0z}\sn1$ is asymptotic to a line segment, with multiplicity 2. Now, it depends on the two possible cases $0< d \leq 1$ or $d > 1$ to understand what is the convergence of $e^{Az}\sn1$: if $d \leq 1$, then the homothety of $e^z$ on $e^{A_0z}$ will \emph{open up} the segment and make it asymptotic to an ellipse again, which again admits a point $p$ as claimed. If $d > 1$, then the action of $e^z$ still makes $e^{Az}\sn1$ converge to a line on compacts, so the claim is proved. $\hfill\diamondsuit$

\vspace{0.2cm}

The proof of Theorem \ref{thmabs} will now be done arguing by contradiction. Suppose that for some smooth bounded domain $\Omega \subseteq \raz$ there is a $C > 0$ such that for every solution of $Q(u) = 0$, it holds that

\begin{equation}\label{absurdo}
\displaystyle\sup_{\Omega} u \leq \sup_{\partial\Omega} u + C.
\end{equation} 

In particular, the same estimate holds for any bounded, smooth domain contained on $\Omega$. Now, let $r>0$ be such that an euclidean ball $B_r$ with radius $r$ is contained on $\Omega$. Let $\sn1(r) = \partial B_r$ be the circle that bounds $B_r$ and let $p\in \sn1(r)$ and $(z_n)_{n\in \mathds{N}}$ be the ones given on by Claim 2. We consider, for each $n \in \mathds{N}$, the problem

\begin{equation}\label{Pn}
\left\{
\begin{array}{cc}
Q(u) = 0 & \text{ on } B_r\\
u = - z_n & \text{ on }\partial B_r.
\end{array}\right.
\end{equation}

\noindent The existence result due to Meeks-Mira-P\'erez-Ros, Theorem 15.1 of \cite{MMPR3}, implies that \eqref{Pn} admits a solution $u_n:B_r \rightarrow \R$, and, from equation \eqref{absurdo}, follows that, for every $n \in \mathds{N}$, the function $u_n$ satisfies

\begin{equation*}
\displaystyle\sup_{B_r} u_n \leq \sup_{\partial \Omega}u_n + C =   -z_n+ C.
\end{equation*} 

We will translate the functions $u_n$ vertically, all to height $0$, using the left translation of the group $L_{(0,0,z_n)}$ to obtain a contradiction. If $\Sigma_n = \text{graph}(u_n)$, we notice that

\begin{eqnarray*}
L_{(0,0,z_n)}\Sigma_n & = & \left\{L_{(0,0,z_n)}\left(x,y,u_n(x,y)\right); \, (x,\,y) \in B_r\vphantom{\begin{array}{c}x\\y\end{array}}\right\}\\
& = & \left\{ \left(e^{Az_n}\left(\begin{array}{c}x\\y\end{array}\right),u_n(x,y) + z_n\right),\,(x,\,y) \in B_r\right\}\\
& = & \left\{ \left(\xt,\,\yt,u_n\left(e^{-Az_n}\left(\begin{array}{c}\xt \\ \yt \end{array}\right)\right) + z_n\right),\,(\xt,\,\yt) \in e^{Az_n}B_r\right\}.
\end{eqnarray*}

If we let $v_n:e^{Az_n}B_r \rightarrow \R$ be the function given by 

\begin{equation*}
v_n(x,y) = u_n\left(e^{-Az_n}\left(\begin{array}{c}x \\ y \end{array}\right)\right) + z_n,
\end{equation*}

\noindent it follows that the graph of $v_n$ is a left translate of the graph of $u_n$, in particular its graph $\Sbn = L_{(0,0,z_n)}\Sigma_n$ is a minimal surface of $\rar$. However, these graphs $\Sbn$ satisfy the hypothesis of Theorem \ref{15.4}, thus they should converge, in compact sets, to a vertical half plane, but we notice that

\begin{equation*}
\displaystyle \sup_{e^{Az_n}B_r} v_n = \sup_{B_r}u_n +z_n \leq C,
\end{equation*}

\noindent so the sequence $v_n$ is uniformly bounded, generating a contradiction.\end{proof}

We notice that, on last proof we show more than the existence of \emph{a} function $u$ as on \eqref{eqthmabs} for a fixed constant $C$. We actually proved that \emph{any} sequence of functions with values along the boundary converging to $- \infty$ cannot have uniformly bounded oscillation. In particular, as a consequence of this result and of the proof of Theorem~\ref{main}, we can close this section with the following result:

\begin{theorem}\label{oscilation}
Let $X$ be a metric Lie group which is isomorphic and isometric to a semidirect product $\rar$ for some $A \in M_2(\R)$. Let $\Omega \subseteq \raz$ be some open, bounded, smooth domain, $k \in \mathds{Z}$ be given and let $u_k$ denote a solution to the problem

\begin{equation}\label{P}\tag{$P_k$}
\left\{
\begin{array}{cc}
Q(u) = 0 & \text{ in } \Omega\\
u = k & \text{ on } \partial \Omega.
\end{array}
\right.
\end{equation}

Then, either $X$ is an unimodular metric Lie group and $u_k \equiv k$ is the constant function, or $X$ is non-unimodular, $\tr(A) \neq 0$ and $u_k$ satisfies:

\begin{itemize}
\item $u_k > k$ in $\Omega$, $\displaystyle \lim_{k\rightarrow -\infty}\text{osc}_{\Omega}(u_k) = + \infty$ and $\displaystyle \lim_{k\rightarrow +\infty}\text{osc}_{\Omega}(u_k) = 0$, when $\tr(A) > 0$;
\item $u_k < k$ in $\Omega$, $\displaystyle \lim_{k\rightarrow -\infty}\text{osc}_{\Omega}(u_k) = 0$ and $\displaystyle \lim_{k\rightarrow +\infty}\text{osc}_{\Omega}(u_k) = +\infty$, when $\tr(A) < 0$,
\end{itemize}

\noindent where $\text{osc}_\Omega (u) =\sup_\Omega(u) - \inf_{\Omega}(u)$ denotes the oscillation of a function $u$ in $\Omega$.

\end{theorem}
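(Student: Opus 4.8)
My plan is to separate the unimodular and non-unimodular cases, and in the latter to reduce $\tr(A)<0$ to $\tr(A)>0$ and then normalise $\tr(A)=2$. If $X$ is unimodular we may take $\tr(A)=0$; then $Q(u_k)=0\ge 0$, and since the boundary curve of $\graph(u_k)$ lies in the horizontal plane $\{z=k\}$, Lemma~\ref{tracelemma} applied once with $\tr(A)\ge 0$ and once with $\tr(A)\le 0$ squeezes $\graph(u_k)$ between $\{z\ge k\}$ and $\{z\le k\}$, so $u_k\equiv k$. For the non-unimodular case, $(x,y,z)\mapsto(x,y,-z)$ is an isometry from $\R^2\rtimes_{-A}\R$ onto $\rar$ — one checks $Q_{ij}^{-A}(z)=Q_{ij}^{A}(-z)$ directly from \eqref{Qij} — carrying $\graph(u)$ to $\graph(-u)$; since $\tr(-A)=-\tr(A)$ and horizontal translations and this reflection preserve $\text{osc}_\Omega$, the case $\tr(A)<0$ follows by applying the $\tr(A)>0$ statement to $-u_k$ in $\R^2\rtimes_{-A}\R$. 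So from now on assume $\tr(A)=2$.

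Next I would establish $u_k>k$ in $\Omega$. At an interior critical point $p$ of $u_k$ one has $\nabla u_k(p)=0$, so by \eqref{Q}, $Q(u_k)(p)=\tr\big(\mathcal{Q}_0\,\Hess u_k(p)\big)+(a+d)e^{-2u_k(p)\tr(A)}$, where $\mathcal{Q}_0$ is the matrix \eqref{MatrixQ} at the point $(u_k(p),0)$, which is positive definite since $Q_{11}Q_{22}-Q_{12}^2=e^{-2z\tr(A)}>0$, and $a+d=\tr(A)=2>0$. If $p$ were an interior local minimum then $\Hess u_k(p)\succeq 0$, forcing $Q(u_k)(p)>0$, contradicting $Q(u_k)\equiv 0$. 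Hence $u_k$ (which is $C^\infty$, being a solution of an elliptic equation) has no interior local minimum, so it attains its minimum $k$ only on $\partial\Omega$, i.e. $u_k>k$ in $\Omega$; thus $\inf_\Omega u_k=k$, $\text{osc}_\Omega(u_k)=\sup_\Omega u_k-k$, and the sign assertions are proved.

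For $\lim_{k\to+\infty}\text{osc}_\Omega(u_k)=0$ I would re-run, for the fixed solution $u_k$, the barrier construction from the proof of Theorem~\ref{main}, exploiting that a horizontal translation is an isometry that changes neither $k$ nor $\text{osc}_\Omega(u_k)$. Fix $\delta\ge 1$, translate $\Omega$ into a strip $\{\delta<x<\delta+w\}$ with $w$ its $x$-width, and take $v(x,y)=\ln(\ell x)/L$ with $L=1+\Lambda$, $\alpha=k$, and $\ell=\max\{1/\delta,\ c_1 e^{-Lk}(\delta+w)^{\Lambda}\}$, where $\Lambda,\lambda,c_1=c_1(\Lambda,\lambda)$ are as in that proof. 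For $k$ large enough (depending on $\delta$) one has $\ell=1/\delta$, the associated quasilinear operator $R$ satisfies $R(v+k)<0=R(u_k)$ with $v+k\ge u_k$ on $\partial\Omega$, and the comparison principle gives $\sup_\Omega u_k\le\sup_\Omega v+k\le \tfrac{\ln(1+w/\delta)}{1+\Lambda}+k$, hence $\text{osc}_\Omega(u_k)\le \tfrac{\ln(1+w/\delta)}{1+\Lambda}$ for all large $k$. Letting $\delta\to+\infty$ yields the claim. (When $A$ is diagonal, or only alternative ii. of Claim~\ref{cl1} holds, use $v(x,y)=\ln(\ell y)/L$; the free-$\alpha$ formulation of Theorem~\ref{main} gives the same conclusion.)

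Finally, $\lim_{k\to-\infty}\text{osc}_\Omega(u_k)=+\infty$ is exactly the observation recorded after the proof of Theorem~\ref{thmabs}: any sequence of solutions of $Q(u)=0$ whose boundary values tend to $-\infty$ has unbounded oscillation, established via Claim~\ref{cl2} and the compactness Theorem~\ref{15.4} after passing to a ball $B_r\subseteq\Omega$. Applying this to every subsequence $k_j\to-\infty$ rules out $\text{osc}_\Omega(u_{k_j})$ staying bounded, so $\text{osc}_\Omega(u_k)\to+\infty$. I expect this last direction to be the main obstacle: one must reconcile the constant-boundary solutions $u_k$ on a general $\Omega$ with the ball-based blow-down of Theorem~\ref{thmabs}, and in particular exclude the low-oscillation branch of solutions over $\widetilde\Omega_k=e^{-Ak}\Omega$ that a naive thin-strip analysis suggests when $\det A<0$ — which is precisely where the full force of Theorem~\ref{15.4} is needed.
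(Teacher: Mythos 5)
Your proposal follows the same overall route as the paper: dispose of the unimodular case with Lemma~\ref{tracelemma}, reduce $\tr(A)<0$ to $\tr(A)>0$ via the isometry $z\mapsto -z$ (which is correct, since $Q_{ij}^{-A}(z)=Q_{ij}^{A}(-z)$), normalise $\tr(A)=2$, prove strict positivity $u_k>k$, obtain $\mathrm{osc}\to 0$ as $k\to+\infty$ by re-running the barrier from Theorem~\ref{main}, and obtain $\mathrm{osc}\to+\infty$ as $k\to-\infty$ from the blow-down argument behind Theorem~\ref{thmabs} together with Theorem~\ref{15.4}. Within this there are two genuine (but equivalent) deviations worth flagging. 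For $u_k>k$ the paper invokes the mean curvature comparison between $\graph(u_k)$ and the CMC plane $\{z=k\}$ at an interior tangency; you instead work directly with the operator \eqref{Q} at an interior critical point, using that $\mathcal{Q}_0$ is positive definite, $\Hess u_k(p)\succeq 0$, and the zeroth-order term $(a+d)e^{-2u_k\tr(A)}>0$ to force $Q(u_k)(p)>0$. Both are correct; yours is a PDE-level rephrasing of the same maximum-principle fact. For $\mathrm{osc}\to 0$ the paper fixes $l=1$ and sends $L=\Lambda+j(k)\to\infty$ via \eqref{equivR}; you fix $L=1+\Lambda$ and instead slide the strip outward, $\Omega\subset\{\delta<x<\delta+w\}$ with $\delta\to\infty$, choosing $\ell=\max\{1/\delta,\,c_1e^{-Lk}(\delta+w)^{\Lambda}\}$, so that for $k$ large $\ell=1/\delta$ and $\sup_\Omega v=\ln(1+w/\delta)/(1+\Lambda)$ can be made small. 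This is an honest reparametrization of the same barrier and is, if anything, a bit cleaner because the $j\mapsto(\Lambda+j)^2/(jM^{2/(\Lambda+j)})$ optimisation disappears. Finally, your closing caveat on the $k\to-\infty$ direction is well taken: the paper's own proof at this point is a one-sentence reduction to Theorem~\ref{thmabs}, and like you it does not explicitly reconcile the fixed ball $B_r$ and special sequence $z_n$ of Claim~\ref{cl2} with the translated domains $e^{-Ak}\Omega$ for integer $k\to-\infty$ (especially when $\det A<0$); you are not introducing a gap beyond what is already implicit in the paper, but you are right that a fully detailed write-up should address the convergence of $e^{-Ak}\partial\Omega$ before invoking Theorem~\ref{15.4}.
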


\begin{proof}
If $\tr(A) = 0$, it is clear that $u_k\equiv k$ is the unique solution to \eqref{P}, by Lemma~\ref{tracelemma}. Also, as the change $A\mapsto -A$ corresponds to a simple change of orientation $z \in \rar \mapsto -z \in \R^2\rtimes_{-A}\R$, we can simply prove the case of $\tr(A) > 0$, and, as previous, it is without loss of generality that we assume that $\tr(A) = 2$, so $A$ is written as on \eqref{Anotuni}.

From Lemma \ref{tracelemma}, it follows that $u_k \geq k$ on $\Omega$, and, if at an interior point $x \in \Omega$ the function $u_k$ attains its minimum $u_k(x) = k$, then the mean curvature comparison principle, applied to $\Sigma_k = \graph(u_k)$ and to the plane $\{z = k\}$ will imply that the mean curvature of $\Sigma_k$ is at least as big as the one of the plane, which is $1 > 0$, a contradiction that proves that, on the interior of $\Omega$, it holds $u_k > k$.

\vspace{0.2cm}

The second part of the claim follows like on the proof of Theorem \ref{thmabs}: if the oscillation of $u_k$ was not going to $+ \infty$ when $k\rightarrow - \infty$ then we could translate all the minimal surfaces $\Sigma_k = \graph(u_k)$ to height zero and obtain a contradiction with Theorem \ref{15.4}.

\vspace{0.2cm}

In order to obtain the last part of the Theorem, that the oscillation of $u_k$ goes to zero when $k$ approaches $+ \infty$, we recall the proof of Theorem \ref{main}: we obtained a constant $C$ depending on many parameters, $C = C(l,L,\lambda,\Lambda,M,\alpha)$. However, the constants $\lambda$ and $\Lambda$ were depending only on the ambient space, as they came from estimates to the coefficients of the operator $Q$. The constant $M$ was depending uniquely on the diameter of $\Omega$, so it was fixed from the beginning, together with $\Omega$. The free parameters we could work with were $l$ and $L$, depending on the previous ones and on the a priori constant $\alpha$. Using an appropriate choice of $l$ and $L$, we obtained that the constant claimed on the Theorem was

\begin{equation*}
C = \frac{\ln(lM)}{L}.
\end{equation*}

The key steps to chose $l$ and $L$ were between equations \eqref{Rvalpha}, \eqref{Rvalpha2} and \eqref{defC}, but the way we proceeded was thinking on the worst case, where the number $\alpha$ was a \emph{negatively large} number, so we began by choosing $L$ and then got to the definition of a $l$ big enough, in order to compensate $e^{2\alpha}$, close to zero. Now, we are taking $\alpha_k = k$ to be \emph{positive} and \emph{very large}, so we are going to follow a different approach. We begin letting $L = \Lambda + j$, where $j \in \mathds{N}$ is yet to be chosen, and take immediately $l =1$, to obtain, similarly to \eqref{Rvalpha2}, the inequality

\begin{equation}\label{Rvalpha3}
R(v+k) < \left.\left. \frac{1}{(\Lambda + j)x^2}\right[ -\frac{j}{\Lambda+j} + 2\frac{\Lambda+j}{\lambda e^{2k}}M^{\left(2-\frac{2}{\Lambda+j}\right)}\right].
\end{equation}

Then, we proceed as before, and try to find some $j\in \mathds{N}$ such that the right hand side of \eqref{Rvalpha3} becomes negative. Such $j$ is going to exist if and only if

\begin{equation}\label{equivR}
\frac{(\Lambda+j)^2}{jM^{\frac{2}{\Lambda+j}}} < \frac{\lambda }{2M^2}e^{2k}.
\end{equation}

If $k$ is small, maybe there is no $j\in \mathds{N}$ such that \eqref{equivR} holds, but for some $k_0 \in \mathds{N}$ big enough it is possible to find some $j \in \mathds{N}$ satisfying \eqref{equivR} (therefore also \eqref{Rvalpha3}). As the right hand side grows with $k$, for every $k \geq k_0$ there will exist such $j$, and we denote $j(k)$ the \emph{largest} $j\in \mathds{N}$ such that \eqref{equivR} hold (as the left hand side is unbounded with $j$ this is well defined). By taking $L = \Lambda + j(k)$, we use \eqref{Rvalpha3} to obtain that exists a constant $C(k) = C(\Omega,k)$ given by

\begin{equation*}
C(k) = \frac{\ln(M)}{\Lambda + j(k)}
\end{equation*}

\noindent such that $\sup_{\Omega}(u) \leq \max\{\sup_{\partial \Omega} u,k\} + C(k)$, for every $u:\Omega \rightarrow \R$ with $Q(u) \geq 0$, the same result as on Theorem \ref{main} but for a different constant $C$, and only for $k\geq k_0$. It follows, in particular, that the functions $u_k$ satisfy, for $k$ large enough, that

\begin{equation*}
\sup_{\Omega} u_k \leq k + C(k),
\end{equation*}

\noindent so, as $\inf_{\Omega}u_k = k$, we obtain that $\text{osc}_\Omega(u_k) \leq C(k)$.

Finally, as the right hand side of \eqref{equivR} is unbounded with respect to $k$ and the left hand side is also unbounded with $j$, follows that $\lim_{k\rightarrow \infty}j(k) = \infty$, so $C(k) \rightarrow 0$ when $k \rightarrow \infty$, and so it does the oscillation of $u_k$. \end{proof}

We can apply the same argument as above for functions which are not constant along the boundary, to estimate the maximum height it can attain with respect to its maximum along the boundary:

\begin{corollary}\label{corOsc}
Let $\rar$ be a non unimodular semidirect product with $\tr(A) > 0$. Let, for $L > 0$, $u_L:\Omega \rightarrow \R$ be a function satisfying

\begin{equation*}
\left\{\begin{array}{l}
Q(u) \geq 0 \text{ on } \Omega,\\
\sup_{\partial \Omega} u = L.
\end{array}\right.
\end{equation*}

\noindent Then

\begin{equation}
\lim_{L\rightarrow +\infty} \left(\sup_{\Omega} u_L - L\right) = 0.
\end{equation}

\end{corollary}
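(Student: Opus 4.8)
The plan is to mimic the final stage of the proof of Theorem~\ref{oscilation}, observing that the argument there never used that $u$ was constant on $\partial\Omega$ except to identify $\inf_\Omega u$ with the boundary value. First I would translate the setup: after a homothety assume $\tr(A) = 2$ and $A$ as in \eqref{Anotuni}, and after a horizontal translation assume $\Omega \subseteq \{1 < x < M\}$, with $\lambda,\Lambda$ the ambient constants produced in the proof of Theorem~\ref{main} (from Claim~\ref{cl1} and \eqref{G1Q22}). For each $L > 0$ take $\alpha = L$ in Theorem~\ref{main}; I would then re-run the estimate \eqref{Rvalpha3} with $l = 1$ and $L = \Lambda + j$, obtaining that the operator $R(v + L)$ is negative as soon as $j$ satisfies the analogue of \eqref{equivR}, namely $(\Lambda + j)^2 / (j\, M^{2/(\Lambda+j)}) < (\lambda / 2M^2) e^{2L}$. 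As in the earlier proof, for $L$ beyond some $L_0$ such $j$ exists; let $j(L)$ be the largest such $j$, so $j(L) \to +\infty$ as $L \to +\infty$.

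Next I would assemble the comparison: with $v(x,y) = \ln(x)/(\Lambda + j(L))$ and $v_0 = v + \max\{\sup_{\partial\Omega} u_L,\, L\} = v + L$ (here $\sup_{\partial\Omega} u_L = L$ by hypothesis, so the max is just $L$), the definition of $R$ gives $R(v_0) \le R(v + L) < 0 \le R(u_L)$, and since $R$ satisfies the hypotheses of the comparison principle (Theorem 10.1 of \cite{GT}) and $v_0 \ge u_L$ on $\partial\Omega$, the comparison principle yields $\sup_\Omega u_L \le \sup_\Omega v_0 = L + \ln(M)/(\Lambda + j(L))$. Writing $C(L) = \ln(M)/(\Lambda + j(L))$, we get $\sup_\Omega u_L - L \le C(L)$ for all $L \ge L_0$. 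On the other hand, since $\tr(A) > 0$, Lemma~\ref{tracelemma} gives $u_L \ge \inf_{\partial\Omega} u_L$ on $\Omega$, and in particular $\sup_\Omega u_L \ge \sup_{\partial\Omega} u_L = L$, so $\sup_\Omega u_L - L \ge 0$.

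Finally, letting $L \to +\infty$: because the right-hand side of the analogue of \eqref{equivR} is unbounded in $L$ while its left-hand side is unbounded in $j$, we have $j(L) \to +\infty$, hence $C(L) \to 0$. Squeezing $0 \le \sup_\Omega u_L - L \le C(L)$ gives $\lim_{L\to+\infty}(\sup_\Omega u_L - L) = 0$, which is the claim. The main obstacle, such as it is, is bookkeeping rather than mathematics: one must be careful that the constants $\lambda, \Lambda, M$ really are independent of the particular function $u_L$ (they are — they depend only on the ambient $A$ and on $\operatorname{diam}(\Omega)$, exactly as in Theorem~\ref{main}), and that the only place the boundary values enter the comparison step is through $\sup_{\partial\Omega} u_L$, which here equals $L$ by assumption, so no constancy on $\partial\Omega$ is needed. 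One should also note the degenerate case when $A$ is diagonal, handled by using instead $v(x,y) = \ln(ly)/L$ with $\Lambda = 3 + |a|$ and $\lambda = 1$ exactly as at the end of the proof of Theorem~\ref{main}; the rest of the argument is verbatim.
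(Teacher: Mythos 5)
Your argument is correct and is exactly what the paper intends: the paper simply remarks that one can ``apply the same argument as above'' (the last part of the proof of Theorem~\ref{oscilation}), and you carry this out, correctly observing that the constancy of the boundary data is never used in that argument --- only $\sup_{\partial\Omega}u_L$ enters the comparison step, which here equals $L$ by hypothesis. One minor point: the lower bound $\sup_\Omega u_L\geq L$ follows immediately from continuity of $u_L$ up to $\partial\Omega$; Lemma~\ref{tracelemma} is not needed for it, and indeed $u_L\geq\inf_{\partial\Omega}u_L$ does not by itself imply $\sup_\Omega u_L\geq\sup_{\partial\Omega}u_L$.
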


\section{Scherk-like fundamental pieces}\label{secScherk}

On this section, we use the tools developed on this section together with Killing graphs techniques to obtain an existence result of \emph{Scherk-like 
fundamental pieces}, which are minimal $\pi$-graphs on $\rar$ assuming the value $0$ along a piecewise smooth curve $\gamma \subset \raz$ and having $
\gamma\cup (\{p_1\}\hspace{-0.1cm}\times\hspace{-0.1cm}[0,\,\infty)) \cup (\{p_2\}\hspace{-0.1cm}\times\hspace{-0.1cm}[0,\,\infty))$ as boundary, where 
$p_1$ and $p_2$ are the endpoints of $\gamma$.

\vspace{0.1cm}

On the ambient space of an \emph{unimodular} group $\rar$, A. Menezes \cite{Men} proved the existence of \emph{complete} (without boundary) minimal surfaces, similar to the singly and to the doubly periodic Scherk minimal surfaces of $\R^3$. We would like to take a moment to give the main steps of the proof of Menezes to the existence of a doubly periodic example:

\begin{proof}[Sketch of the proof of Theorem 2, \cite{Men}] Let $\Delta\subseteq \raz$ to be a triangle with vertexes 
\begin{equation*}
o  =  (0,\,0,\,0),\ p_1  =  (a,\,0,\,0),\ p_2  =  (0,\,a,\,0),
\end{equation*}

\noindent for some $a > 0$. Let $P_c$ be the polygon given by the union of segments 
\begin{equation*}
P_c = \overline{\vphantom{p_1(c)}op_1}\cup\overline{p_1p_1(c)}\cup\overline{p_1(c)p_2(c)}\cup\overline{p_2(c)p_2}\cup\overline{\vphantom{p_1(c)}p_2o},
\end{equation*}

\noindent where $p_1(c) = (a,0,c)$ and $p_2(c) = (0,a,c)$. Then, Theorem 15.1 of \cite{MMPR3} implies the existence of a minimal $\pi-$graph $\Sigma_c$ with $\partial \Sigma_c = P_c$.

Then, one key property was observed: $\Sigma_c$ is a \emph{Killing graph} over the vertical domain $\Omega_c = \{(t,a-t,s);0\leq t \leq a,\ 0\leq s \leq c\}$ with respect to the horizontal Killing field $\partial_x+\partial_y$, thus it is \emph{unique}. This implies it is \emph{stable} and that the variation $c \mapsto \Sigma_c$ is continuous. By making $c \rightarrow \infty$, and using curvature estimates due to H. Rosenberg, R. Souam and E. Toubiana \cite{RST} for stable surfaces on homogeneous manifolds, it is possible to show the convergence of $\Sigma_c$ to some surface $\Sigma_\infty$, nowhere vertical and with boundary
\begin{equation*}
\partial \Sigma_{\infty} = P_\infty = \overline{op_1}\cup\big(\{p_1\}\times[0,\,\infty)\big) \cup \overline{op_2} \cup \big(\{p_2\}\times[0,\,\infty)\big).
\end{equation*}

Finally, use the ambient isometries to rotate $\Sigma_\infty$ along the two segments $\overline{op_1}$ and $\overline{op_2}$ to obtain a complete minimal $\pi-$graph on $\rar$, which can be extended periodically by horizontal translations.\end{proof}

On this subject, our contribution is an extension of the above result to any semidirect product $\rar$. Although on the general case it is not possible to find examples with no boundary, on the above special case treated by A. Menezes we reobtain the same result with a different technique when taking limits. Precisely, we prove:

\begin{theorem}\label{thmScherks}
Let $\rar$ be a semidirect product, where $A \in M_2(\R)$ is any matrix with $\tr(A) \geq 0$. Then, there is $L_0 = L_0(\tr(A),\,\det(A)) > 0$ (and $L_0 = \infty$ when $\tr(A) = 0$) such that if $p_1,\,p_2 \in \raz$ satisfy $d(p_1,\,p_2) \leq L_0$, then for any piecewise smooth curve $\gamma\subseteq \raz$ with endpoints $p_1,\,p_2$ which is a convex graph over the segment $\alpha = \overline{p_1p_2}$ and meets $\alpha$ on angles less than $\pi/2$, there exists a minimal surface $\Sigma$ which is a $\pi-$graph and with boundary 

\begin{equation*}
\partial \Sigma = \gamma \cup (\{p_1\}\times[0,\,+\infty))\cup(\{p_2\}\times[0,\,+\infty)).
\end{equation*}

\noindent Moreover, $\Sigma$ is nowhere vertical, is the unique minimal surface on $\rar$ with such boundary and it is a Killing graph over the vertical domain $\Omega_\infty = \alpha\times[0,\,+\infty)$.

\end{theorem}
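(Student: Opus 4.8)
The plan is to follow the scheme of A. Menezes, replacing the specific triangle geometry by the general convex-graph curve $\gamma$, and replacing the stability+curvature-estimate step by the Killing-graph uniqueness together with the height estimates from Section~\ref{secHE}. First I would set up the family of compact Plateau problems: for each $c>0$ let $\gamma$ be the bottom boundary curve (at height $0$) and close it up with the two vertical segments $\{p_1\}\times[0,c]$, $\{p_2\}\times[0,c]$ and the curve $\gamma_c$ obtained by lifting $\gamma$ to height $c$ (more precisely a horizontal translate/copy of $\gamma$ in the plane $\{z=c\}$, joined appropriately so that the resulting polygonal-type curve $P_c$ is a simple closed curve projecting monotonically onto the convex disk $\Omega$ bounded by $\gamma\cup\alpha$). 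The hypotheses that $\gamma$ is a convex graph over $\alpha$ meeting $\alpha$ in angles $<\pi/2$ are exactly what is needed so that $P_c$ bounds a minimal $\pi$-graph $\Sigma_c=\graph(u_c)$ by the existence result Theorem~15.1 of \cite{MMPR3} (the angle condition guarantees the convexity/barrier hypotheses there); this gives a function $u_c:\Omega\to\R$ with $Q(u_c)=0$, $u_c|_\gamma=0$, $u_c$ equal to $c$ near the two corner points along the vertical sides.

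Next I would record the Killing-graph property: the field $Y=\partial_x+\partial_y$ (suitably, the horizontal Killing field whose integral lines are the horizontal translates of $\alpha$) has the property that $\Sigma_c$ is a graph over the vertical $2$-dimensional domain $\Omega_c=\alpha\times[0,c]$ with respect to $Y$, because each integral line of $Y$ meets $\Sigma_c$ exactly once — this uses convexity of $\Omega$ and the monotone projection, exactly as in Menezes. The standard uniqueness theorem for Killing graphs then gives that $\Sigma_c$ is the unique minimal surface with boundary $P_c$, hence stable, and the continuity of $c\mapsto\Sigma_c$ (equivalently $c\mapsto u_c$) follows from uniqueness plus compactness. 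The monotonicity $u_c\le u_{c'}$ for $c\le c'$ should also follow from the comparison/uniqueness once the boundary data are ordered, so $u_\infty:=\lim_{c\to\infty}u_c$ exists pointwise, possibly $+\infty$ at some points a priori.

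The crucial input that replaces the Rosenberg–Souam–Toubiana curvature estimates is Theorem~\ref{main} (and its refinement, Corollary~\ref{corOsc} / Theorem~\ref{oscilation}): since $Q(u_c)\ge 0$ and $\sup_{\partial\Omega}u_c$ is controlled — on the fixed curve $\gamma$ the value is $0$, and the large values $c$ occur only on the two vertical sides which project to the two endpoints $p_1,p_2\in\partial\Omega$ — I would argue that the height estimate forces $\sup_\Omega u_c$ to stay bounded \emph{independently of $c$}, provided $d(p_1,p_2)=\operatorname{diam}(\alpha)$ is at most some $L_0=L_0(\tr A,\det A)$. Concretely, applying Theorem~\ref{main} on $\Omega$ with the free parameter $\alpha$-choice $\alpha=0$ gives $\sup_\Omega u_c\le C(\operatorname{diam}(\Omega),0)$, and one needs $C$ to be finite, which is automatic, but to get a genuinely uniform bound that survives the limit one wants the estimate localized near $\Omega$ using that the only boundary blow-up happens at two points; here the smallness condition $d(p_1,p_2)\le L_0$ is what makes the explicit constant in \eqref{Rvalpha2}–\eqref{defC} (which involves $M^{2\Lambda/(1+\Lambda)}$ with $M\sim\operatorname{diam}$) yield a bound compatible with the barrier $v=\ln(lx)/L$ being finite on $\Omega$, and with $\gamma$ meeting $\alpha$ at angles $<\pi/2$ so that one can slip a vertical-plane or tilted-plane barrier under the graph near $p_1,p_2$. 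With $u_c$ uniformly bounded on compact subsets of $\Omega$, elliptic estimates for the quasilinear operator $Q$ give $C^2_{loc}$ convergence $u_c\to u_\infty$, $Q(u_\infty)=0$, $u_\infty$ finite, with boundary values $0$ on $\gamma$ and the graph limiting to the two vertical half-lines over $p_1,p_2$; that $\Sigma_\infty=\graph(u_\infty)$ is nowhere vertical follows from the gradient bound in $W$, and being a Killing graph over $\Omega_\infty=\alpha\times[0,\infty)$ (hence unique) passes to the limit from the $\Sigma_c$.

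I expect the main obstacle to be precisely the \emph{uniform} height bound: Theorem~\ref{main} as stated gives, for a \emph{fixed} function with controlled boundary data, a bound depending on $\operatorname{diam}(\Omega)$ and on $\alpha$, but in problem $(P_c)$ the boundary datum is $c$ on part of $\partial\Omega$ (near the two endpoints), so a naive application with $\alpha=c$ gives nothing (as the theorem's last sentence warns), and with $\alpha=0$ one must argue that the contribution of the two "bad" boundary points is negligible — this is where one needs both the geometric smallness $d(p_1,p_2)\le L_0$ (to keep the explicit constant $C$ small relative to the barrier) and a separate local barrier near $p_1,p_2$ exploiting the angle condition $<\pi/2$ and the fact (from Section~\ref{rar}) that vertical planes are minimal, so that tilted pieces of vertical planes serve as upper barriers pinning $u_c$ down near the corners uniformly in $c$. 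Assembling the global bound from these local barriers plus the interior estimate \eqref{eqHE} is the delicate part; everything else (Plateau existence, Killing-graph uniqueness and stability, monotone convergence, elliptic regularity, passing "nowhere vertical" and "Killing graph" to the limit) is by now standard and parallels \cite{Men} closely.
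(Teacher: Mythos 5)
Your proposal captures the overall shape of the argument (compact Plateau problems, Killing-graph uniqueness, monotone passage to the limit, Menezes-style barriers for ``nowhere vertical''), but it misses the two technical ideas that actually make the proof close, and the specific route you propose for the uniform estimate would fail.

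First, the top boundary curve is not a horizontal copy of $\gamma$ at height $c$. If you lift $\gamma$ to $\{z=c\}$, the resulting closed curve $P_c$ projects twice onto $\gamma$ and not at all onto the segment $\alpha$, so it does not monotonically parametrize $\partial\Delta=\alpha\cup\gamma$ and Theorem~15.1 of \cite{MMPR3} does not apply as stated. What the paper uses instead (Proposition~\ref{propExhaus}) is a curve $\alpha_c$ living in the vertical plane over $\alpha$, the graph of $f_c=f+c$ where $f$ solves the ODE $f''+\Lambda (f')^2+2/\lambda=0$ built from the constants $\lambda,\Lambda$ of Claim~\ref{cl1} and \eqref{G1Q22}. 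This ODE forces the $\partial_y$-Killing cylinder over $\partial\Omega_c$ to have mean curvature vector pointing inward, which is the barrier the proof needs, and the threshold $L_0=\sqrt{\lambda/(2\Lambda)}\,\pi/2$ is precisely the condition under which $f$ stays finite and nonnegative on $[0,L]$. So $L_0$ comes from Proposition~\ref{propExhaus}, not from balancing the explicit constant in \eqref{Rvalpha2}--\eqref{defC} against $\operatorname{diam}(\Omega)$.

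Second, you propose to obtain compactness by a uniform bound on the $\pi$-graph functions $u_c$ via Theorem~\ref{main}, and you yourself note that no admissible choice of the parameter $\alpha$ in \eqref{eqHE} gives something uniform in $c$ when the boundary datum equals $c$ near the corners. This cannot be repaired by local barriers: the functions $u_c$ genuinely blow up near $p_1,p_2$ as $c\to\infty$, since the limit surface must contain the two vertical half-lines. The paper does not bound $u_c$ at all. It bounds the Killing-graph function $g_c:\Omega_c\to\R$ defined by $\Sigma_c=\{(x,g_c(x,z),z)\}$; since each $\Sigma_c$ is a $\pi$-graph over $\Delta$ it lies in the $\pi$-cylinder over $\Delta$, giving the automatic uniform bound $0\le g_c(x,z)\le g(x)$ of \eqref{gradestdepois}. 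The $C^2_{\mathrm{loc}}$ compactness is then obtained from the Casteras--Ripoll gradient estimates for Killing graphs (Theorem~\ref{gradestCR}) together with the Dajczer--de Lira existence theorem used to establish smoothness of $g_c$. Theorem~\ref{main} and Corollary~\ref{corOsc} appear nowhere in the proof of Theorem~\ref{thmScherks}; only the auxiliary constants $\lambda,\Lambda$ from the proof of Theorem~\ref{main} are recycled, inside Proposition~\ref{propExhaus}.
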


The proof of Theorem \ref{thmScherks} is given on Section \ref{secProofScherk}. If $\tr(A) > 0$, when considering polygons as $P_c$ above, there is a minimal $\pi-$graph $\Sigma_c$ with boundary $P_c$. However, as the maximum principle does not hold, there is no reason for it to be a Killing graph over $\Omega_c$ and we do not have the tools to ensure the continuity of the family $\Sigma_c$, which makes impossible to use geometric barriers. It becomes clear that, when $\tr(A) \neq 0$, another sequence of surfaces $\Sigma_c$ should be constructed, or other tools (such as stability of minimal $\pi-$graphs) developed.

Our approach will be as follows: instead of considering minimal $\pi-$graphs over a domain on $\raz$, we will look to the problem \emph{horizontally}, and consider an exhaustion of the \emph{half-strip} $\Omega_\infty = \alpha \times[0,\,+\infty)$ by subdomains $\Omega_c$ on which is possible to find a family of minimal Killing graphs with prescribed boundary. Then, we use techniques from Killing graphs and elliptic partial differential equations to ensure the convergence of such family to another minimal Killing graph $\Sigma_\infty$. Then, we go back to the problem \emph{vertically} (as the intermediate Killing graphs will also be $\pi-$graphs, by a result of Meeks, Mira, P\'erez and Ros), and then we apply the geometric barriers developed by A. Menezes to see that the surface $\Sigma_\infty$ is, as claimed, a $\pi-$graph, nowhere vertical.

\subsection{A good exhaustion of $\Omega_\infty$}\label{secKilling}

The next proposition will be of fundamental importance on the construction described on last section, as it will give the exhaustion of $\Omega_\infty$ by domains $\Omega_c$ where is possible to solve the existence of minimal Killing graphs with prescribed boundary (see Figure \ref{OmInfty}).

\begin{figure}
\centering
\begin{subfigure}[b]{0.48\textwidth}
\includegraphics[width=\textwidth]{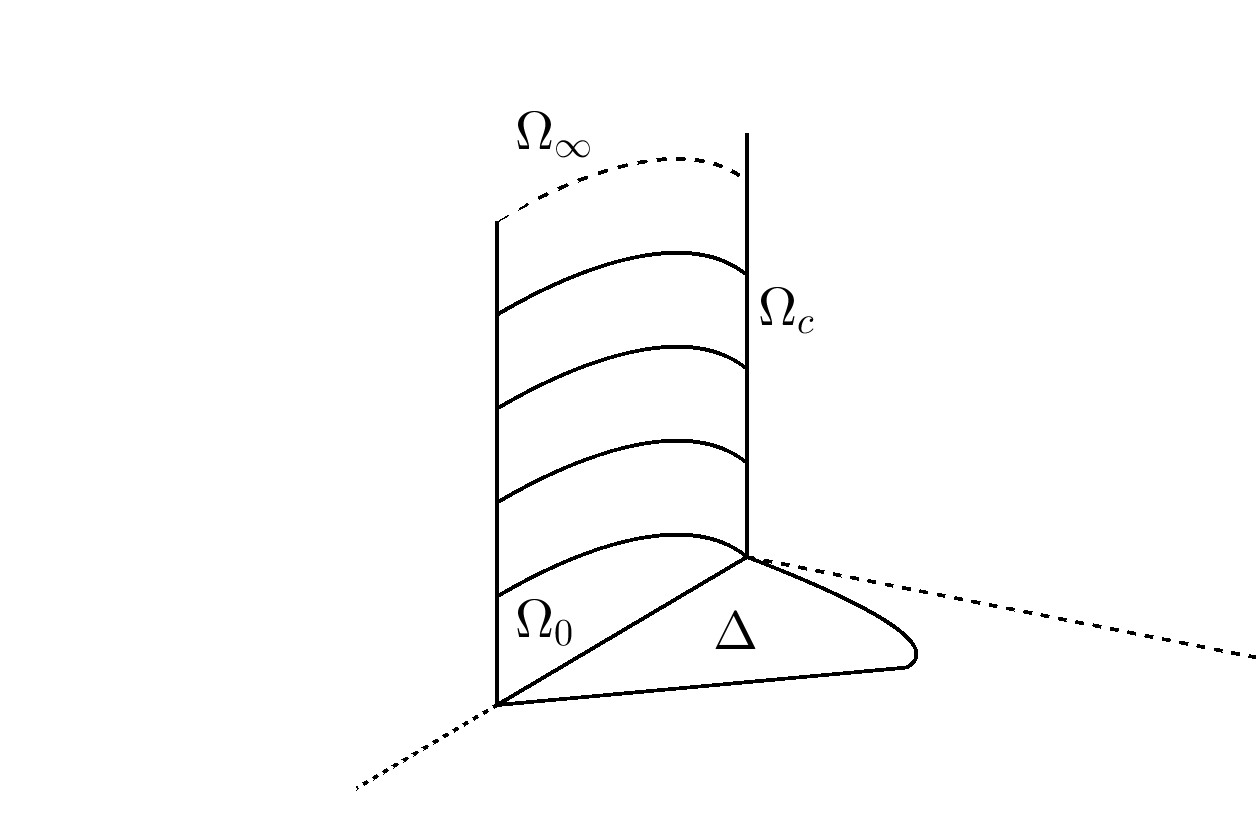}
\end{subfigure}
\begin{subfigure}[b]{0.48\textwidth}
\includegraphics[width=\textwidth]{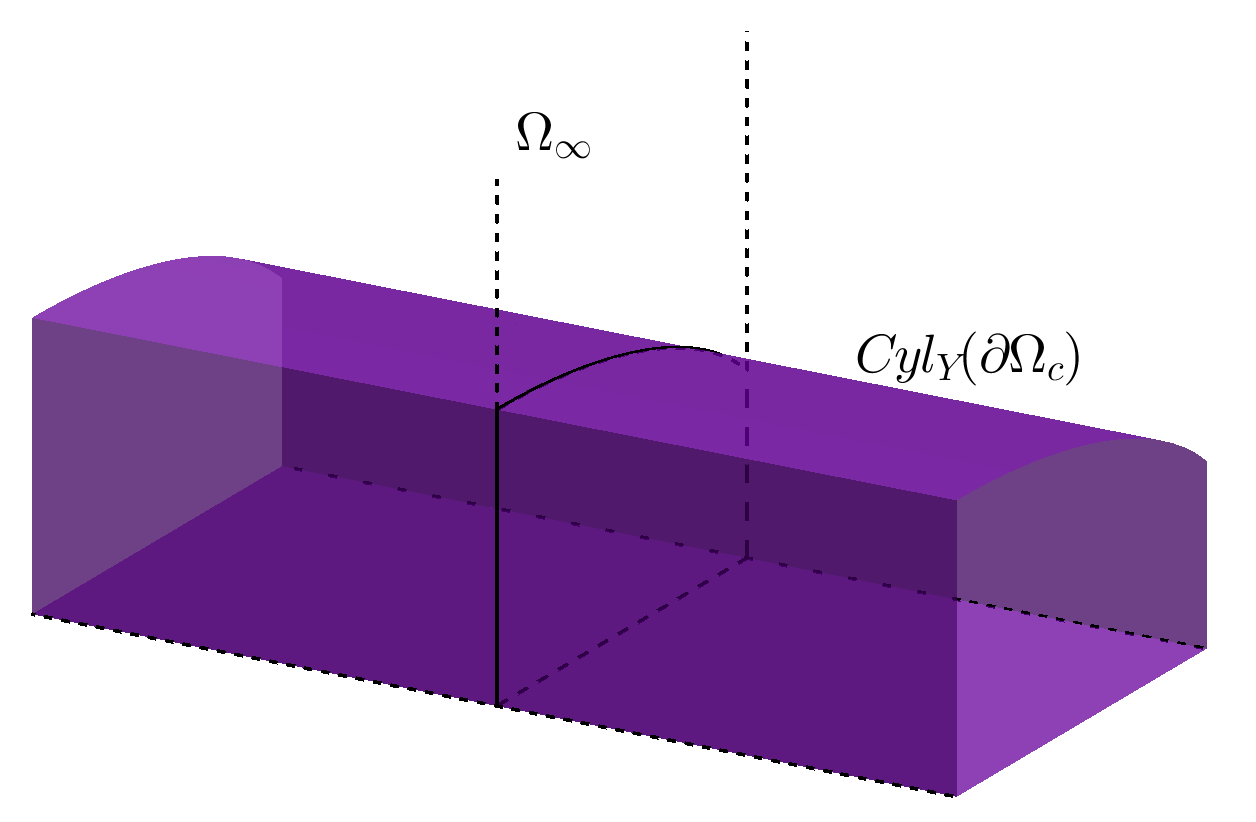}
\end{subfigure}
\caption{The horizontal domain $\Delta$ and the exhaustion of $\Omega_\infty$ by subdomains $\Omega_c$ whose Killing cylinder (on the right) have mean curvature vector pointing inwards.\label{OmInfty}}
\end{figure}

\begin{proposition}\label{propExhaus}
Let $\rar$ be a semidirect product where $\tr(A) \geq 0$. Let $p_1,\,p_2 \in \raz$ and $\alpha = \overline{p_1p_2}$ to be the segment joining $p_1$ and $p_2$. We define the vertical domain

\begin{equation}\label{omegainf}
\Omega_\infty = \alpha\times [0,\,+ \infty).
\end{equation}

Then, there exists some $L_0 = L_0(A) > 0$ such that if $L = \len(\alpha) < L_0$, $\Omega_\infty$ admits a continuous exhaustion $\{\Omega_c\}_{c> 0}$ by domains $\Omega_c$ with boundary given by $\alpha$, a graph over $\alpha$, called $\alpha_c$, and the two vertical segments joining the endpoints of $\alpha$ and $\alpha_c$. This exhaustion is such that the Killing cylinder over $\partial \Omega_c$ with respect to the horizontal Killing field $Y_\theta = \sin(\theta)\partial_x+\cos(\theta)\partial_y$ has mean curvature vector pointing inwards, where $\theta$ is such that $Y_\theta$ is perpendicular to $\Omega_\infty$ at $z = 0$.

\end{proposition}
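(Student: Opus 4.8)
The plan is to produce the exhaustion explicitly: I would take each $\alpha_c$ to be a circular arc (a piece of a round circle in the horizontal plane $z = 0$, translated to height $z = c$) of large radius with chord $\alpha$ and bulging away from $\Omega_\infty$, and let $\Omega_c$ be the vertical domain bounded by $\alpha$ at $z=0$, by (the vertical translate of) $\alpha_c$ at $z = c$, and by the two vertical segments over the endpoints $p_1, p_2$. After an ambient isometry \eqref{isoiso} and a horizontal translation I may assume $\alpha$ lies along the $x$-axis, so that the relevant Killing field is $Y_\theta = \partial_x$; then $\partial \Omega_\infty$ consists of the $x$-axis segment, its vertical translate, and two vertical half-lines, and $\Omega_c$ sits inside the slab $\{0 \le z \le c\}$. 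The Killing cylinder over $\partial\Omega_c$ is the union of the flow-lines of $\partial_x$ through $\partial\Omega_c$; since $\partial_x = F_1$ is a right-invariant Killing field, this is a well-defined immersed surface and the computation of its mean curvature reduces to a computation along one transversal curve.

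The main computation is the sign of the mean curvature of the Killing cylinder on each of its three smooth pieces. First, the two ``vertical wall'' pieces are Killing cylinders over the vertical segments $\{p_i\}\times[0,c]$; these are pieces of vertical planes, which are minimal in $\rar$ (the metric is invariant under the rotation by $\pi$ about a vertical line), so their mean curvature is exactly zero — this is the borderline acceptable case, and I would note that one may perturb the walls slightly inward if a strict sign is wanted, or simply allow $H=0$ along these faces. Second, the ``floor'' piece is the Killing cylinder over $\alpha$ at $z=0$, i.e. the piece of the horizontal plane $\{z=0\}$ over $\alpha$; by the remark in Section~\ref{rar} the horizontal plane has mean curvature $\tr(A)/2 \ge 0$ with respect to the upward normal, hence its mean curvature vector points into $\Omega_c$ (which lies above it) exactly when $\tr(A)\ge 0$ — this is where the hypothesis $\tr(A)\ge 0$ is used. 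Third, and this is the genuine obstacle, is the ``ceiling'' piece: the Killing cylinder over the arc $\alpha_c$ sitting at height $z = c$. Along this piece I must check that the mean curvature vector of the cylinder points downward, i.e. into $\Omega_c$. This is a curve $\times$ $\R$-type surface whose mean curvature is the signed geodesic curvature of $\alpha_c$ inside the plane $\{z = c\}$ (with the metric $ds^2$ restricted there, which by \eqref{ds2} is the flat metric $Q_{11}(c)\,dx^2 + 2Q_{12}(c)\,dx\,dy + Q_{22}(c)\,dy^2$) plus a correction term coming from how $\partial_x$ fails to be parallel — computable from the Levi--Civita table in Section~\ref{rar}. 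The correction term is bounded by a constant depending only on $A$ and (through $e^{-2c\tr(A)}$ and the coefficients $a_{ij}(c)$) stays controlled, while the geodesic-curvature term can be made as large as we like, with the correct (inward) sign, by choosing the radius of the arc $\alpha_c$ small enough — equivalently, $\alpha_c$ curved enough — which forces an upper bound $L_0 = L_0(A)$ on the chord length $L = \len(\alpha)$ for which such an arc with the right curvature exists while still being a graph over $\alpha$ meeting it at angles $<\pi/2$. Here is the one subtlety to watch: the metric coefficients at height $z=c$ degenerate as $c\to\infty$ when $\tr(A)>0$, so I would rescale and check that the ratio (geodesic curvature needed)/(available curvature of a fixed-shape arc) stays bounded uniformly in $c$; because $e^{Ac}$ acts conformally-up-to-bounded-distortion on the arc and the plane metric scales homogeneously, the condition ``radius small enough'' can be met by a single choice depending only on $A$, giving a $c$-independent $L_0$.

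Finally I would assemble the exhaustion and verify continuity: the family $c\mapsto\Omega_c$ is an increasing family of domains with $\bigcup_{c>0}\Omega_c = \Omega_\infty$, and since $\alpha_c$ was chosen to depend continuously (indeed smoothly) on $c$ — one may even take the same arc-shape at every height, only translated vertically — the exhaustion is continuous in the sense required for the barrier arguments of Section~\ref{secScherk}. I would also record that, by construction, $\partial\Omega_c = \alpha \cup \alpha_c \cup (\{p_1\}\times[0,c]) \cup (\{p_2\}\times[0,c])$ with $\alpha_c$ a graph over $\alpha$, as stated, and that the mean curvature vector of the Killing cylinder over $\partial\Omega_c$ points into $\Omega_c$ along every smooth face (strictly along floor and ceiling, and vanishing — or made strict by an arbitrarily small inward tilt — along the two vertical walls), which is precisely the property needed to run the Perron-type / Killing-graph existence argument for minimal Killing graphs with prescribed boundary. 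The expected main obstacle is the ceiling estimate together with its uniformity in $c$; everything else is either a direct consequence of the structure equations in Section~\ref{rar} or elementary Euclidean geometry of circular arcs.
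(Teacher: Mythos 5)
There is a fundamental geometric error in your construction. You take $\alpha_c$ to be a circular arc in the horizontal plane $\{z=c\}$, bulging out of the vertical plane containing $\alpha$. But the Proposition requires $\{\Omega_c\}$ to be an exhaustion of $\Omega_\infty = \alpha\times[0,\infty)$, a \emph{vertical} half-strip (contained in $\{y=0\}$ after normalizing $\alpha$ along the $x$-axis); hence each $\Omega_c$ must be a subdomain of that vertical plane, and $\alpha_c$ must be a curve \emph{in the vertical plane}, i.e. a graph $z = f_c(x)$ over $\alpha$. A horizontal arc bulging in the $y$-direction does not lie in $\Omega_\infty$, so your $\Omega_c$ is not even a subset of $\Omega_\infty$, and the four boundary curves you describe are not coplanar — they bound a disk in $\R^3$, not a planar subdomain. (A separate, smaller slip: after normalizing $\alpha$ to lie on the $x$-axis, the Killing field perpendicular to $\Omega_\infty$ is $\partial_y$, not $\partial_x$.)

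The error propagates fatally into the key sign computation. The horizontal Killing cylinder over a curve contained in a horizontal plane $\{z=c\}$ with respect to a \emph{horizontal} Killing field is itself a subset of the horizontal plane $\{z=c\}$: sweeping an arc $(x(t),y(t),c)$ by the flow of $\partial_y$ produces only points of the form $(x,y,c)$. So the ``ceiling'' of your Killing cylinder is a piece of a horizontal plane and therefore has mean curvature $\tr(A)/2 \geq 0$ with respect to the upward normal — pointing \emph{out of} $\Omega_c$, the wrong direction when $\tr(A)>0$. Your claim that the ceiling's mean curvature equals the geodesic curvature of $\alpha_c$ confuses the horizontal Killing cylinder with the \emph{vertical} ($\pi$-)cylinder over a plane curve (the latter does satisfy $H = k_g\,e^{-z\tr(A)}$, as noted in the paper's footnote), so the proposed ``make the arc curvy enough'' mechanism never enters the computation. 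The paper instead takes $\alpha_c$ to be the graph of $f_c(x) = f(x)+c$ inside $\Omega_\infty$, where $f$ is chosen so that the $\pi$-graph $(x,y)\mapsto f_c(x)$ (which \emph{is} the $\partial_y$-Killing cylinder over $\alpha_c$) has non-positive mean curvature: using the estimates $Q_{22}(z)e^{2z}>\lambda$ and $G_1/Q_{22}\leq\Lambda$ from Claim~\ref{cl1} and \eqref{G1Q22}, the mean curvature formula \eqref{propPDE} reduces to a one-variable inequality which forces $f$ to satisfy the ODE $f'' + \Lambda(f')^2 + 2/\lambda = 0$; the explicit solution $f(x) = \tfrac{1}{\Lambda}\ln\bigl(\cos(\sqrt{2\Lambda/\lambda}\,x)/\cos(\sqrt{2\Lambda/\lambda}\,L)\bigr)$ is the source of the constraint $L_0 = \tfrac{\pi}{2}\sqrt{\lambda/(2\Lambda)}$. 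Your instinct to bulge the ceiling so as to get the right sign is the right intuition, but the bulge must be in the $z$-direction inside the vertical strip, not in the $y$-direction out of it.
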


\begin{proof}

First, we notice that, after a rotation on $A$ as on \eqref{isoiso} and a horizontal translation on $\rar$, which is an ambient isometry, we assume without loss of generality that $p_1 = (0,0,0)$ and $p_2 = (L,0,0)$ for some $L > 0$, then $\alpha$ becomes the segment $\alpha = \{(x,\,0,\,0);\ 0\leq x \leq L\}$ and $\Omega_\infty$ is the half-strip

\begin{equation}
\Omega_\infty = \{(x,\,0,\,z)\in \rar;\ 0\leq x \leq L,\ z \geq 0\},
\end{equation}

\noindent transversal to the Killing field $Y = \partial_y$. Such assumptions will be kept until the end of the paper.

If $\tr(A) = 0$, then the result is trivial (and without the need for an upper bound $L_0$) by taking $\alpha_c$ to be the translate of $\alpha$ to height $c$, $\alpha_c = \{(x,0,c);0\leq x \leq L\}$, as horizontal planes are minimal. Then, until the end of the proof we will treat the non-unimodular case and again we assume without loss of generality that $\tr(A) =2$, so $A$ is a matrix as on \eqref{Anotuni}. We will exhibit the curves $\alpha_c$ explicitly, then we prove they have the desired properties.

First, we treat the case where $A$ is not diagonal and either $a^2+bc \leq 0$ or $b\neq 0$: let $\lambda,\,\Lambda$ the constants related with the matrix $A$ via \emph{i.} of Claim~\ref{cl1} and \eqref{G1Q22}. We let 

\begin{equation*}
L_0 = \sqrt{\frac{\lambda}{2\Lambda}}\frac{\pi}{2}
\end{equation*}

\noindent and, if $L \leq L_0$, we let $f:[0,\,L]\rightarrow \R$ to be given by

\begin{equation}\label{deff}
f(x) = \frac{1}{\Lambda} \ln\left(\frac{\cos\left(\sqrt{\frac{2\Lambda}{\lambda}}x\right)}{\cos\left(\sqrt{\frac{2\Lambda}{\lambda}}L\right)}\right).
\end{equation}

First, we notice that, as $L \leq L_0$, on the interval $[0,L]$ the function $x \mapsto \cos(\sqrt{2\Lambda/\lambda}x)$ is decreasing, so $f$ is well defined and non negative, with $f(x) = 0 \iff x = L$. We let, for $c \geq 0$, $f_c = f + c$ and let $\alpha_c = \text{graph}(f_c) \subseteq \Omega_{\infty}$. When we set 
\begin{equation}
\Omega_c = \left\{(x,0,z) \in \rar;\ 0\leq x \leq L, 0 \leq z \leq f_c(x)\right\},
\end{equation}

\noindent it follows that $\{\Omega_c\}_{c>0}$ is a continuous exhaustion of $\Omega_\infty$. Now we are going to show that the Killing cylinder of the boundary of $\Omega_c$ with respect to $\partial_y$ has mean curvature vector pointing inwards.

The $\partial_y$-Killing cylinder of $\partial \Omega_c$ has four smooth components (see Figure \ref{OmInfty}, right): one is a subdomain of a horizontal plane, so it has mean curvature $1$ pointing upwards, two are contained on vertical planes, thus are minimal. The last component is the one corresponding to $\alpha_c$, and it is a $\pi-$graph of the function $u_c(x,y) = f_c(x)$, on the horizontal strip $\{(x,\,y,\,0) ; \ 0 \leq x \leq L\}$, and \eqref{propPDE} implies that its mean curvature is given by

\begin{equation}
H =\left.\left. \frac{e^{4f_c}}{2W^3}\right[Q_{22}(f_c)f_c^{\prime\prime} + G_1(f_c)\left(f_c^{\prime}\right)^2 + 2 e^{-4f_c}\right].
\end{equation}

Now, we follow the steps on the proof of Theorem \ref{main}: as $b\neq 0$, Claim \ref{cl1} implies that $Q_{22}(z) > \lambda e^{-2z}$, and $G_1/Q_{22} \leq \Lambda$. These relations imply that (as the derivatives of $f_c$ coincide with the ones of $f$)

\begin{equation}\label{Hsemisemifinal}
H \leq \left.\left.\frac{e^{4f_c}}{2W^3}Q_{22}(f_c)\right[ f^{\prime\prime} + \Lambda \left(f^{\prime}\right)^2 + 2 \frac{e^{-2f_c}}{\lambda}\right],
\end{equation}

\noindent whenever $A$ is not diagonal and satisfies either $b\neq 0$ or $a^2+bc \leq 0$. In particular, as $f_c \geq 0$ we have

\begin{equation}\label{Hsemifinal}
H \leq \left.\left.\frac{e^{4f_c}}{2W^3}Q_{22}(f_c)\right[ f^{\prime\prime} + \Lambda \left(f^{\prime}\right)^2 +  \frac{2}{\lambda}\right].
\end{equation}

Now, we observe that $f$ was chosen in such a way it satisfies the ODE 

\begin{equation}\label{ODEf}
f^{\prime\prime} + \Lambda \left(f^{\prime}\right)^2 +  \frac{2}{\lambda} = 0,
\end{equation}

\noindent so, by applying \eqref{ODEf} on \eqref{Hsemifinal}, we obtain that $H \leq 0$, with respect to the upward orientation, so the mean curvature vector of the Killing cylinder around $\alpha_c$ is pointing downwards, as promised, and this finishes the proof on the case where $A$ is not diagonal and either $a^2+bc \leq 0$ or $b \neq 0$. Now, we treat the simpler case of $A$ being given by

\begin{equation}\label{Asecondcase}
A = \Md{1+a}{0}{c}{1-a}.
\end{equation}

\noindent On this case, we have that $Q_{22}(z) = e^{2(a-1)z}$ and $G_1(z) = (3+a) e^{2(a-1)z}$ (as, from \eqref{dmai0} we obtain $a_{11}(z) = e^{(1+a)z}$ and $a_{12}(z) = 0$). Thus, the mean curvature of a $\pi-$graph to a function $u(x,y) = f(x)$ is given by

\begin{equation*}
H = \left.\left.\frac{e^{2(a+1)f}}{2W^3}\right[f^{\prime\prime} + (3+a)\left(f^{\prime}\right)^2 + 2 e^{-2(1+a)f}\right],
\end{equation*}

\noindent and we can proceed the proof as done on the previous case.\end{proof}

\subsection{Existence of Scherk-like graphs: Proof of Theorem \ref{thmScherks}}\label{secProofScherk}

On this section we prove Theorem \ref{thmScherks}. The proof is an standard argument of convergence, with the difference that we are going to look at the graphs sometimes vertically (as $\pi-$graphs), to have geometrically defined barriers, and sometimes horizontally (as Killing graphs), so we can use techniques of Killing graphs and elliptic partial differential equations. 

\begin{figure}
\centering
\begin{subfigure}[b]{0.48\textwidth}
\includegraphics[width=\textwidth]{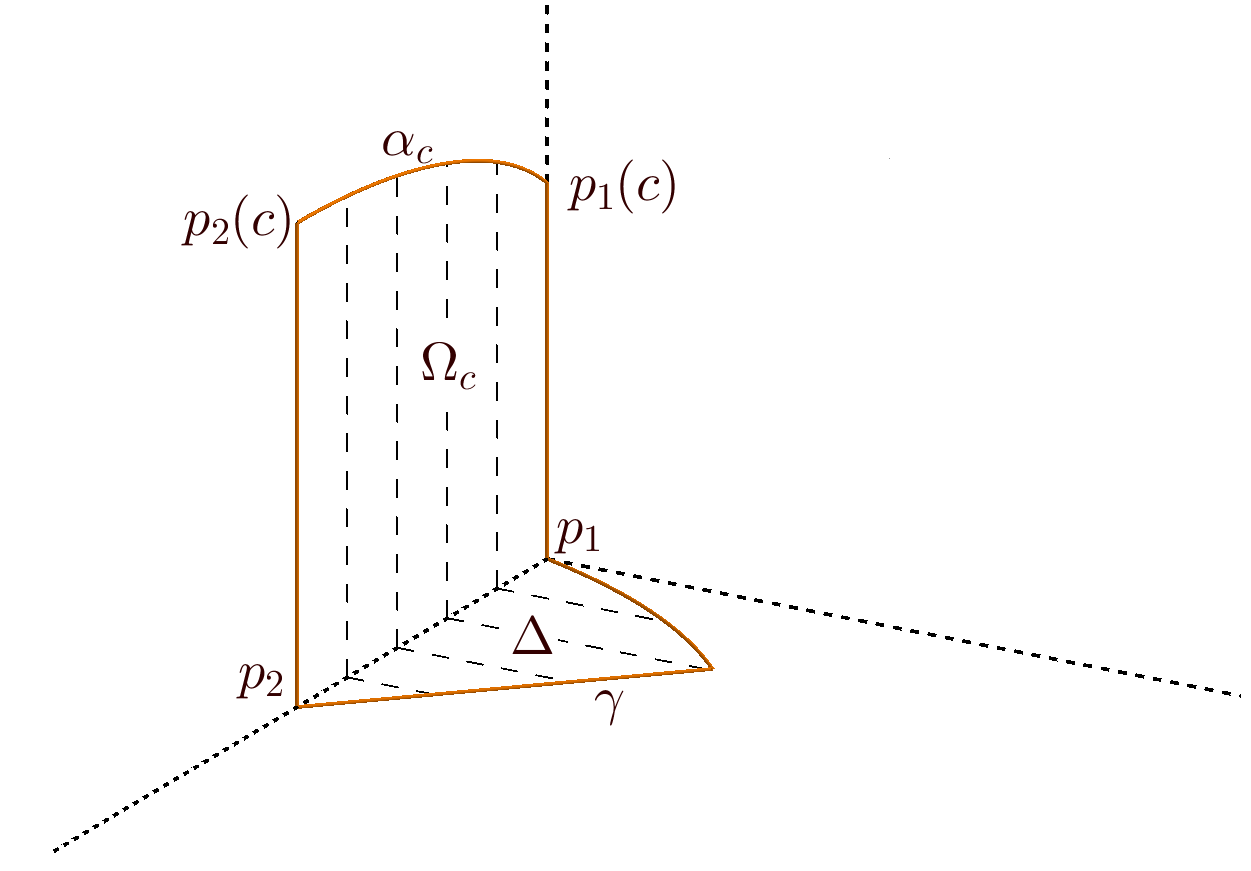}
\end{subfigure}
\begin{subfigure}[b]{0.48\textwidth}
\includegraphics[width=\textwidth]{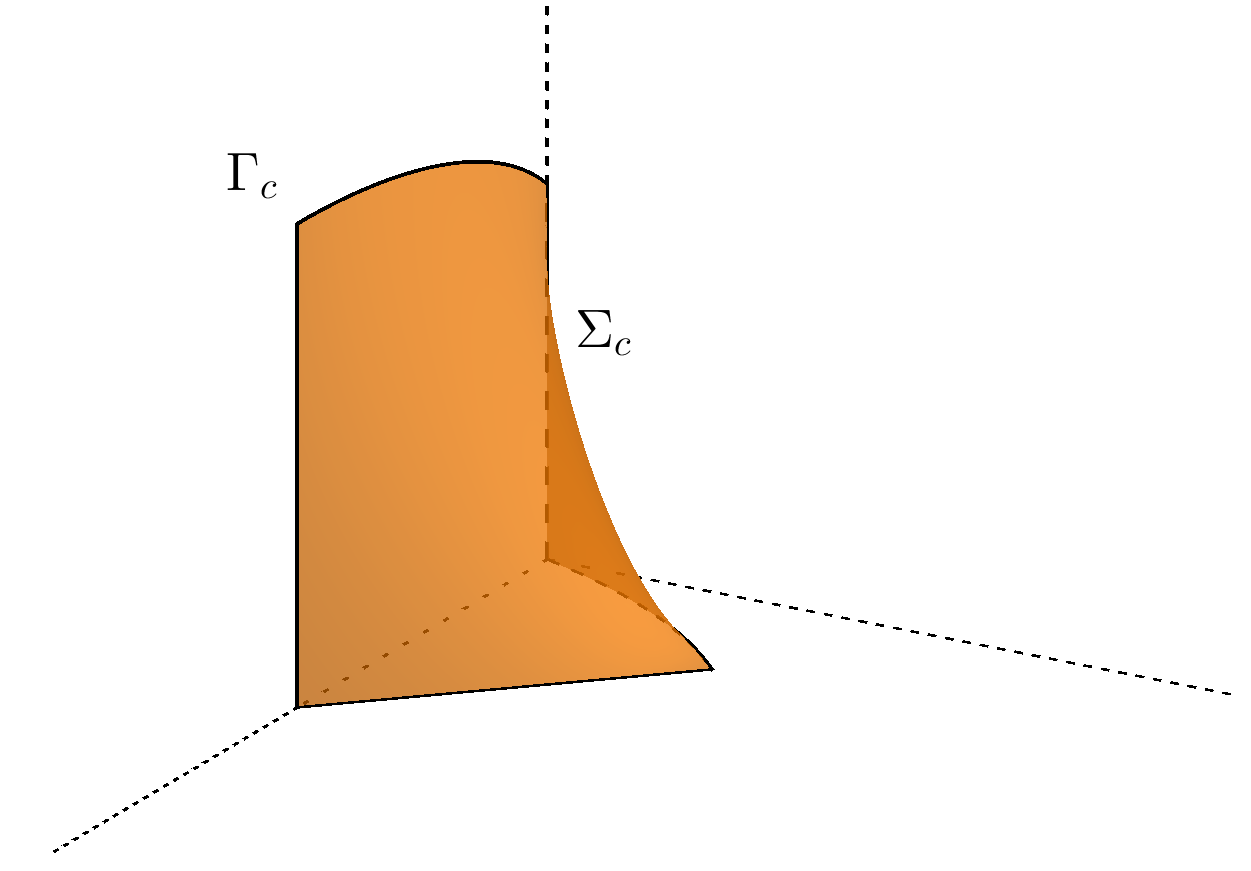}
\end{subfigure}
\caption[The curve $\Gamma_c$ and the surface $\Sigma_c$]{The surface $\Sigma_c$ (on the right) is both a $\pi-$graph over $\Delta$ and a $\partial_y-$Killing graph over $\Omega_c$, with $\partial \Sigma = \Gamma_c$ (the curve the left).\label{Sigmac}}
\end{figure}

\begin{proof}[Proof of Theorem \ref{thmScherks}]

Let $A \in M_2(\R)$ be any matrix with $\tr(A) \geq 0$ and let $L_0 > 0$ be the one given by Proposition \ref{propExhaus}. Let $p_1,\,p_2 \in \raz$ be such that $d(p_1,\,p_2) = L < L_0$ and without loss of generality assume $p_1 = (0,0,0)$ and $p_2 = (L,0,0)$. Let $\alpha = \{(x,0,0);0\leq x \leq L\}$ to be the segment joining $p_1 $ and $p_2$ and let $g:[0,\,L]\rightarrow \R$ be a convex, piecewise smooth function, with $g(0) = g(L) = 0$, meeting $\alpha$ on angles smaller than $\pi/2$ at $0$ and $L$, defining a curve $\gamma \subseteq \raz$,

\begin{equation*}
\gamma = \{(x,\,g(x),\,0)\in \raz ; \ 0\leq x \leq L\},
\end{equation*}

\noindent a curve smooth by parts with endpoints $p_1,\,p_2$ such that $\alpha \cup \gamma$ bounds a convex domain $\Delta \subseteq \raz$ (as on Figure \ref{Sigmac}, left).

We also let $\Omega_\infty = \alpha \times[0,\,+\infty)$ and, following the notation of Proposition~\ref{propExhaus}, let, for each $c\geq 0$,

\begin{equation*}
\Omega_c = \{(x,\,0,\,z)\in \rar; \ 0\leq x \leq L, \ 0\leq z \leq f_c(x)\},
\end{equation*}

\noindent and $\alpha_c = \{(x,\,0,\,f_c(x)); \ 0 \leq x \leq L\}$ to be the graph of $f_c$, in such a way that its $\partial_y-$Killing cylinder

\begin{equation*}
Cyl_{\partial_y}(\alpha_c) = \{(x,\,y,\,f_c(x)); \ 0 \leq x \leq L,\,y \in \R\}
\end{equation*}

\noindent has mean curvature vector pointing downwards. We also denote by

\begin{equation*}
p_1(c) = (0,0,f_c(0)),\ p_2(c) = (L,\,0,\, f_c(L))
\end{equation*}

\noindent the endpoints of $\alpha_c$, and we let, for $c \geq 0$, $\Gamma_c$ be a simple closed curve on $\rar$ given by (see Figure \ref{Sigmac}, left)

\begin{equation}
\Gamma_c = \gamma \cup \overline{p_1p_1(c)}\cup \alpha_c \cup \overline{p_2p_2(c)}.
\end{equation}

\begin{claim}\label{claimsigmac}
The curve $\Gamma_c$ as above bounds an \emph{unique} minimal $\pi$-graph $\Sigma_c$ over $\Delta$, which is also a $\partial_y-$Killing graph over $\Omega_c$.
\end{claim}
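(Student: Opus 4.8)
\textbf{Proof proposal for Claim \ref{claimsigmac}.}

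The plan is to first establish existence of a minimal $\pi$-graph $\Sigma_c$ over $\Delta$ with $\partial \Sigma_c = \Gamma_c$ using the existence theorem of Meeks--Mira--P\'erez--Ros (Theorem 15.1 of \cite{MMPR3}), which applies to the convex domain $\Delta \subseteq \raz$ and prescribed continuous boundary data on $\pi^{-1}(\partial \Delta)$. The boundary curve $\Gamma_c$ projects monotonically onto $\partial \Delta = \gamma \cup \alpha$: over $\gamma$ it has height $0$, and over $\alpha$ it consists of the graph $\alpha_c$ together with the two vertical segments $\overline{p_1p_1(c)}$ and $\overline{p_2p_2(c)}$, which project to the endpoints. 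So $\Gamma_c$ is an admissible boundary in the sense of that theorem, producing a minimal $\pi$-graph $\Sigma_c = \graph(u_c)$ for some $u_c : \Delta \to \R$. One must check that the function $f_c$ from Proposition~\ref{propExhaus} is such that $\Gamma_c$ is genuinely a graph over $\Delta$ near $\alpha$ and that the corner behaviour at $p_1,p_2$ is compatible with the angle condition on $\gamma$; this is where the hypothesis that $\gamma$ meets $\alpha$ at angles less than $\pi/2$ enters.

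Next I would show $\Sigma_c$ is a $\partial_y$-Killing graph over $\Omega_c$. The key input is Proposition~\ref{propExhaus}: the $\partial_y$-Killing cylinder over $\partial \Omega_c$ has mean curvature vector pointing inwards (the $\alpha_c$-component has $H \le 0$ w.r.t.\ the upward normal, the horizontal-plane component has $H = \tr(A)/2 \ge 0$ pointing up, i.e.\ inward, and the two vertical-plane components are minimal). This cylinder therefore acts as a barrier: $\Sigma_c$, having boundary $\Gamma_c \subseteq \overline{\pi^{-1}(\partial\Delta)}$ lying on the Killing cylinder and being minimal, cannot touch the cylinder at an interior point by the maximum principle for the mean curvature operator (comparing $\Sigma_c$ with the cylinder, which is mean-convex toward $\Omega_c$). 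Hence $\Sigma_c$ stays inside the solid Killing cylinder $\bigcup_{y\in\R} \Omega_c$, i.e.\ its vertical projection onto the $xz$-plane lands in $\Omega_c$. To upgrade ``contained in the solid cylinder'' to ``is a Killing graph,'' I would invoke the transversality/graph structure: a minimal surface contained in a solid $\partial_y$-Killing cylinder over a domain $\Omega_c$ on which the $\partial_y$-orbits foliate the region simply, with boundary that is itself a Killing graph over $\partial \Omega_c$, must be a Killing graph — this follows either from the result of Meeks--Mira--P\'erez--Ros that relates $\pi$-graphs and Killing graphs in this setting, or by a standard Alexandrov-type moving-planes argument along the $\partial_y$ flow using that the boundary projects injectively.

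Finally, uniqueness. Since $\Sigma_c$ is a $\partial_y$-Killing graph over $\Omega_c$, it is the graph of a function solving a quasilinear elliptic PDE (the Killing graph equation) over $\Omega_c$ with prescribed boundary values; the Killing field $\partial_y$ is an ambient isometry, so vertical translations along $\partial_y$ \emph{are} isometries and the standard comparison principle for the Killing graph operator applies — giving uniqueness among Killing graphs with boundary $\Gamma_c$. Uniqueness among \emph{all} minimal surfaces with boundary $\Gamma_c$ then follows because any such surface is trapped inside the solid Killing cylinder by the same barrier argument and hence is a Killing graph, reducing to the previous case. I expect the main obstacle to be the rigorous passage from ``$\Sigma_c$ lies in the solid Killing cylinder'' to ``$\Sigma_c$ is a Killing graph,'' i.e.\ ruling out that $\Sigma_c$ folds back over $\Omega_c$; handling this cleanly requires either citing the precise $\pi$-graph/Killing-graph correspondence from \cite{MMPR3} or carefully running the moving-planes argument with the $\partial_y$-flow, paying attention to the corners of $\Gamma_c$ at $p_1, p_2, p_1(c), p_2(c)$.
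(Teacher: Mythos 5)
Your outline matches the paper's skeleton — existence from Theorem~15.1 of \cite{MMPR3}, containment of $\Sigma_c$ in the solid $\partial_y$-Killing cylinder over $\Omega_c$ via the barrier from Proposition~\ref{propExhaus} and sliding, then single-valuedness over the $\partial_y$-orbits by the Alexandrov-type translation argument — but you have correctly guessed where the real difficulty lies and then not closed it. Showing that each orbit $\mathcal{O}(q)$ meets $\Sigma_c$ in exactly one point does \emph{not} by itself make $\Sigma_c$ a Killing graph in the useful sense: you also need that $\Sigma_c$ is nowhere tangent to $\partial_y$, equivalently that the resulting function $g_c$ is smooth. A smooth surface can meet every orbit exactly once and still have vertical tangencies (think of $\{x = y^3\}$ for the field $\partial_y$ in $\R^3$), in which case $g_c$ is merely continuous. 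There is no off-the-shelf $\pi$-graph/Killing-graph equivalence in \cite{MMPR3} you can simply cite here, and the moving-plane argument you invoke only gives injectivity, not transversality.

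The paper closes this gap with a genuinely additional argument: around each interior $q \in \Omega_c$, solve the Dirichlet problem for the minimal Killing-graph operator on a small ball $B$ with boundary data $g_c|_{\partial B}$ via Dajczer--de Lira (\cite{DL}); since a priori $g_c|_{\partial B}$ is only $C^0$, one approximates it monotonically by $C^{2,\alpha}$ data, uses the Casteras--Ripoll gradient estimates (\cite{CR}) to get uniform interior $C^1$ bounds, and passes to the limit to obtain a smooth local Killing graph with the same boundary values. The sliding argument then forces this local graph to coincide with $\Sigma_c$ over $B$, which shows $g_c$ is smooth and $\grad g_c$ is never vertical in the interior; smoothness up to $\partial\Omega_c$ (away from the corners and the non-smooth points of $\gamma$) then follows by switching back to the $\pi$-graph description and the boundary maximum principle applied to $u_c$. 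For the uniqueness statement, the paper does not run a direct comparison-principle argument for the Killing operator as you suggest; instead it observes that the argument above shows \emph{every} minimal $\pi$-graph with boundary $\Gamma_c$ is a Killing graph over $\Omega_c$, and then uniqueness follows from the $\partial_y$-sliding argument. Your comparison-principle route for Killing graphs is also defensible (since $\varphi_t$ is an isometry, adding a constant preserves solutions), but note the claim only asserts uniqueness among minimal $\pi$-graphs, and your proposal slightly overreaches by asserting uniqueness among all minimal surfaces with boundary $\Gamma_c$.
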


\noindent\emph{Proof of Claim \ref{claimsigmac}.} First, we notice that $\Gamma_c$ monotonically parametrizes $\partial \Delta$, then we can use Theorem 15.1 of \cite{MMPR3} to obtain a minimal, least area, $\pi-$graph $\Sigma_c$ with boundary $\partial \Sigma_c = \Gamma_c$. 

We are going to show that $\Sigma_c$ is a $\partial_y-$Killing graph, on the sense that there will exist a function $g_c:\Omegab_c\rightarrow \R$, smooth up to the boundary, such that $R(g_c) = 0$ (here $R$ will stand for the elliptic operator of minimal $\partial_y$-Killing graphs) and

\begin{equation}\label{SigmKil}
\Sigma_c = Gr_{\partial_y}(g_c) = \{(x,\,g_c(x,z),\,z);\ (x,0,z) \in \Omega_c\}.
\end{equation}

To begin with, as $\Sigma_c$ is a $\pi-$graph, there exists a function $u_c:\Delta \rightarrow \R$ such that

\begin{equation}\label{SigmaPi}
\Sigma_c = \graph(u) = \{(x,y,u_c(x,y)); \ (x,y,0) \in \Delta\}.
\end{equation}

\noindent We remark that $\Sigma_c$ is contained on the $\partial_y$-Killing cylinder over $\Omega_c$, so $0 \leq u_c(x,y) \leq f_c(x)$. Indeed, that $u > 0$ on the interior of $\Delta$ follows directly from the maximum principle. If there was an interior point $(x_0,\,y_0,\,0) \in \Delta$ such that $u_c(x_0,\,y_0) > f_c(x_0)$, then we could consider the family $Cyl_{\partial_y}(\alpha_t)$, for $t >c$, and obtain a last contact point, interior for both $\Sigma_c$ and $Cyl_{\partial_y}(\alpha_t)$, so the mean curvature of $Cyl_{\partial_y}(\alpha_t)$ would point upwards, in contradiction with Proposition~\ref{propExhaus}. 

Let $q = (x,0,z) \in \Omega_c$ be an interior point and consider $\mathcal{O}(q)$ to be the orbit of $q$ with respect to the flux $\varphi_t$ of the Killing field $\partial_y$, which is the horizontal line $\mathcal{O}(x,0,z) = \{(x,y,z);y \in \R\}$. Then we notice that $\mathcal{O}(x,0,z) \cap \Sigma_c$ is never empty, otherwise $\Sigma_c$ would not be simply connected and then it could not be a $\pi-$graph over $\Delta$.

Moreover, the intersection $\mathcal{O}(x,0,z) \cap \Sigma_c$ must be always a single point: if it contained two (or more) points $q_i = \varphi_{t_i}(q)\in \Sigma_c$, with $0<t_1< t_2$, then for $t_0 = t_2 - t_1 > 0$, $\varphi_{t_0}(\Sigma_c) \cap \Sigma_c \neq \emptyset$. Now, as $\varphi_t(\partial\Sigma_c) \cap \Sigma_c = \emptyset$ for all $t\neq 0$ by construction, we can consider the last contact point between $\varphi_t(\Sigma_c)\cap \Sigma_c$, and it will be interior for both $\Sigma_c$ and $\varphi_t(\Sigma_c)$, a contradiction with the maximum principle.

We denote $(x,g_c(x,z),z) = \Sigma_c \cap \mathcal{O}(x,0,z)$. This implies that $\Sigma_c$ can be written as \eqref{SigmKil}, but we still do not have the regularity on $g_c$. In order to prove that $g_c$ is smooth, we begin by proving that $\grad(g_c)$ is never vertical. 

Let $q \in \Omega_c$ be any interior point and consider a small ball $B = B_{\Omega_c}(q,r) \subseteq \text{int}(\Omega_c)$ such that $Cyl_{\partial_y}(\partial B)$ has mean curvature vector pointing inwards. Consider the following problem on $B$:

\begin{equation}\label{KillingProb}
\left\{ \begin{array}{l}
R(w) = 0, \text{ on } \text{int}(B)\\
w\vert_{\partial B} = g_c\vert_{\partial B},
\end{array}\right.
\end{equation}

\noindent where $R$ is the mean curvature operator for $\partial_y-$Killing graphs. In other words, we are looking for a minimal $\partial_y-$Killing graph over a small ball on $\Omega_c$ that coincides with $\Sigma_c$ on its boundary. 

If $\Phi = g_c\vert_{\partial B}$ was of class $C^{2,\,\alpha}$, we could simply use the existence result due to M. Dajczer and J. H. de Lira, Theorem 1 of \cite{DL}\footnote{We note that the hypothesis on the Ricci curvature on \cite{DL} is used uniquely to obtain an a priori estimate for the height of the graph, which is satisfied on our setting by the maximum principle.} to obtain a solution to \eqref{KillingProb}. However, at this point we can only guarantee that $\Phi$ is of class $C^0$, so we need to use an approximation argument. Let $(\Phi^{\pm}_n)_{n \in \mathds{N}} \subseteq C^{2,\,\alpha}(\partial B)$ be two sequences of $C^{2,\,\alpha}$ functions, converging to $\Phi$ and such that 

\begin{equation}\label{monotone}
\Phi^-_{n} \leq \Phi^-_{n+1} \leq \Phi \leq \Phi_{n+1}^+ \leq \Phi_{n}^+,
\end{equation}

\noindent for every $n \in \mathds{N}$. By Theorem 1 of \cite{DL}, there are functions $w^{\pm}_n \in C^{2,\,\alpha}(\overline{B})$ with minimal $\partial_y-$Killing graph and such that $w_n^{\pm}\vert_{\partial B} = \Phi^{\pm}_n$. From \eqref{monotone} we obtain that the sequences $w^{\pm}_n$ also are monotone, $w^-_n$ is non-decreasing and $w^+_n$ is non-increasing, both uniformly bounded. To obtain the convergence of the sequences $w^{\pm}_n$ to a solution of \eqref{KillingProb}, we are going to use some recent gradient estimates for Killing graphs obtained by J.-B. Casteras and J. Ripoll on \cite{CR}:

\begin{theorem}[Theorem 4, \cite{CR}]\label{gradestCR} Let $M$ be a Riemannian manifold and let $Y$ be a Killing field. Let $\Omega$ be a Killing domain in $M$ and let $o \in \Omega$ and $r > 0$ such that the open geodesic ball $B_\Omega(o,r)$ is contained in $\Omega$. Let $u \in C^3(B_\Omega(o,r))$ be a negative function whose $Y-$Killing graph has constant mean curvature $H$. Then there is a constant $L$ depending only on $u(o)$, $r$, $\abs{Y}$ and $H$ such that $\norma{\grad(u)(o)}\leq L$.
\end{theorem}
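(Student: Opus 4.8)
The statement is quoted from \cite{CR}, so no proof is reproduced in the paper; here is the route I would take to establish such an interior gradient estimate. Around the orbit $\mathcal{O}(o)$, since $Y$ is Killing, a tubular neighbourhood splits isometrically as $\Omega'\times\R$ with metric $\sigma_{ij}(x)\,dx^idx^j+\rho(x)^2dy^2$, $\rho=\abs Y$, the coefficients being independent of the $\R$-factor; in these coordinates the constant-mean-curvature $Y$-Killing graph of $u:\Omega\to\R$ satisfies a quasilinear elliptic equation $R(u)=0$ whose leading part is the mean curvature operator perturbed by the warping $\rho$ (written explicitly in \cite{DL}): the coefficient matrix $a^{ij}(x,Du)$ has the characteristic degeneracy as $\abs{Du}\to\infty$, and the gradient/zero-order terms grow at most linearly in $\abs{Du}$. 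One then checks --- routinely --- that $R$, restricted to the compact piece of $M$ that $u(o)$ and $r$ single out, satisfies the structure conditions of the classical interior gradient estimate for prescribed-mean-curvature-type equations (Korevaar type; see \cite{GT}, Ch.~16), with constants controlled by $\sup\rho$, $(\inf\rho)^{-1}$, $\sup\abs{\nabla\rho}$, $H$ and the $\sigma$-Christoffel symbols there; this yields $\norma{\grad(u)(o)}\le L(u(o),r,\abs Y,H)$. The negativity of $u$ is a normalization placing $u(o)<0$, used only to fix the sign of an auxiliary multiplier.

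For completeness I would also give the self-contained geometric argument, since it is the one that generalizes best. Work on $\Sigma=Gr_Y(u)$: the angle function $\Theta:=\langle N,Y\rangle/\abs Y=\rho/W\in(0,1]$, with $W=\sqrt{\rho^2+\abs{\nabla_\sigma u}^2}$, satisfies $\abs{\nabla_\sigma u}^2=\rho^2(\Theta^{-2}-1)$, so it suffices to bound $\Theta$ from below at $o^\ast:=\varphi_{u(o)}(o)\in\Sigma$. Since $Y$ is Killing, $Y|_\Sigma$ is an ambient Killing field, hence its normal component $f=\langle Y,N\rangle=\rho\Theta$ is a Jacobi field on the constant-mean-curvature surface $\Sigma$:
\begin{equation*}
\Delta_\Sigma f+\bigl(\abs{A_\Sigma}^2+\overline{\Ric}(N,N)\bigr)f=0.
\end{equation*}
Expanding this, and bounding $\rho,\nb\rho,\nb^2\rho$ and $\overline{\Ric}$ over the compact region of $M$ involved, one derives for $v:=1/\Theta$ a differential inequality $\Delta_\Sigma v\ge\abs{A_\Sigma}^2 v-C_1 v-C_2$ together with $\abs{\nabla_\Sigma v}\le C_3(1+\abs{A_\Sigma}v)$, the constants $C_i$ depending only on $\abs Y$, $H$ and the ambient geometry there. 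Choosing a cutoff $\eta$ on $\Sigma$ with $\eta(o^\ast)=1$ supported over $B_\Omega(o,r)$ (using control of the distance function and its $\Sigma$-Laplacian) and examining $G:=\eta^2v$ at its maximum $p$: from $\nabla_\Sigma G(p)=0$ and $\Delta_\Sigma G(p)\le0$, after substituting the two inequalities and absorbing gradient terms by Cauchy--Schwarz, one gets $(\eta^2 v)(p)\le C_4(\abs Y,H,r,u(o))$, hence $v(o^\ast)\le C_4$ and $\norma{\grad(u)(o)}^2=\rho(o)^2(v(o^\ast)^2-1)\le L^2$.

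The main obstacle in the geometric argument is the $\abs{A_\Sigma}^2$ bookkeeping: $\abs{A_\Sigma}^2$ is not a priori bounded, so one must verify that the favourable term $\abs{A_\Sigma}^2 v$ coming from $\Delta_\Sigma v$ strictly dominates the unfavourable $\abs{A_\Sigma}^2 v$-sized contribution that $\abs{\nabla_\Sigma v}^2$ produces when $\Delta_\Sigma(\eta^2 v)\le0$ is expanded --- precisely the Korevaar-type balancing, which closes because $C_3$ can be beaten by the unit coefficient of $\abs{A_\Sigma}^2 v$. A secondary nuisance is the warping factor $\rho=\abs Y$, whose first and second ambient derivatives enter the Bochner identity and the expansion of the Jacobi equation, which is why $\abs Y$ (implicitly with its derivatives) must appear among the parameters of $L$. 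For the PDE route the corresponding difficulty is merely the clean verification of the structure conditions of \cite{GT}, Ch.~16, with care that every constant is localized by $u(o)$ and $r$ rather than by global data.
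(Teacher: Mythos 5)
The paper does not prove this statement: it is imported verbatim as Theorem~4 of Casteras--Ripoll \cite{CR}, exactly as you note at the outset, so there is no internal proof to compare against. Your two-pronged sketch is a reasonable reconstruction, and the second (geometric) route --- treating $f=\lan Y,N\ran$ as a Jacobi field on the CMC surface, passing to $v=1/\Theta$, and running a Korevaar-type maximum-principle argument on $\eta^2 v$ --- is substantially the method actually used in \cite{CR}. One small imprecision: the hypothesis $u<0$ is not merely a sign normalization for an auxiliary multiplier; it is the one-sided a priori bound (the graph lies below the orbit of $o$) that, together with the pinned value $u(o)$, localizes the height and makes the cutoff/constant bookkeeping close, which is why $u(o)$ and $r$ appear explicitly in $L$.
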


All functions $w^{\pm}_n$ have uniform bounds on the $C^0$ norm, thus Theorem \ref{gradestCR} above implies that there are uniform gradient estimates on compact subsets of $B$. This implies that both sequences will converge on the $C^2-$norm to a function $w \in C^{2}(B)\cap C^{0}(\overline{B})$, which is a solution of \eqref{KillingProb}. Now, just use the flux of $\partial_y$ and the same translation argument as before to obtain that $w$ coincides with $g_c$ on $B$, in particular the gradient of $g_c$ is not vertical on interior points of $\Omega_c$, as claimed.

Now we use the relation $(x,g_c(x,z),z) = (x,y,u_c(x,y))$ to prove that $g_c$ is actually smooth \emph{up to the boundary}, with the unique exceptions of $p_1,\,p_2$, $p_1(c),\,p_2(c)$, where $\partial \Omega_c$ is not smooth, and the finite number of points where $g$ is not differentiable. Just notice that $u_c$ is smooth up to the boundary (except on the points where $\partial \Delta$ is not differentiable) and that the gradient of $u_c$ is never horizontal on $\partial \Delta$, by the boundary maximum principle. Moreover, it follows from last argument that $\grad(u_c)$ is never horizontal on interior points of $\Delta$, so $g_c$ is also smooth up to the boundary, with the exceptions above.

Finally, we remark that this argument proves that \emph{any} minimal $\pi-$graph $S$ with $\partial S = \Gamma_c$ is a Killing graph, then we obtain that $\Sigma_c$ is unique, which proves Claim~\ref{claimsigmac}. $\hfill \diamondsuit$

\vspace{0.1cm}

Now, we notice that the uniqueness of $\Sigma_c$, given by Claim~\ref{claimsigmac}, implies that the correspondence $c \mapsto g_c$ is continuous, and, as defined, the functions $g_c$ satisfied, on the boundary of $\Omega_c$:
\begin{equation*}
g_c(0,z) = g_c(L,z) = 0, \ g_c\vert_{\alpha_c} = 0, \ g_c(x,0) = g(x).
\end{equation*}

Again, as $\Sigma_c$ is a $\pi-$graph over $\Delta$, it is contained on the $\pi-$cylinder over $\Delta$, and this can be translated to the horizontal setting as the inequality

\begin{equation}\label{gradestdepois}
0 \leq g_c(x,z) \leq g(x),
\end{equation}

\noindent for every $(x,\,0,\,z) \in \Omega_c$. Moreover, the usual argument using the translations given by the flux of $\partial_y$ shows that the sequence $g_c$ is monotonically increasing on $c$, that is $g_c(x_0,\,z_0) \leq g_{c^\prime}(x_0,\,z_0)$, for every $(x_0,\,0,\,z_0)\in \Omega_c$ and for every $c^\prime \geq c$. In particular the sequence will converge (as it is bounded) pointwise for some function $g_\infty : \Omega_\infty \rightarrow \R$, such that $0 \leq g_\infty \leq g$. Next claim will show that the convergence is actually on the $C^{2}-$norm, so $Gr_{\partial_y}(g_\infty) = \Sigma_\infty$ is a minimal surface of $\rar$.

\begin{claim}\label{claimconvergence} When $c\rightarrow \infty$, the functions $g_c$ converge on the $C^{2}-$norm to $g_\infty:\Omega_\infty \rightarrow \R$.
\end{claim}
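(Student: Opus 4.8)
The plan is to upgrade the pointwise convergence $g_c \to g_\infty$ to $C^2$ convergence on compact subsets of the \emph{open} domain $\Omega_\infty$ by combining interior gradient estimates with elliptic Schauder theory, and then to handle convergence up to the (smooth part of the) boundary separately. First I would fix a compact set $\mathcal{K} \subseteq \Omega_\infty$ and a radius $r_0 > 0$ so that for every interior point $q \in \mathcal{K}$ the geodesic ball $B_{\Omega_\infty}(q, r_0)$ is contained in $\Omega_\infty$; since the $g_c$ are uniformly bounded by \eqref{gradestdepois} and each $\partial_y$-Killing graph $Gr_{\partial_y}(g_c)$ is minimal ($H = 0$), Theorem~\ref{gradestCR} of Casteras--Ripoll gives a uniform bound $\norma{\grad g_c(q)} \leq L$ depending only on $\sup |g|$, $r_0$ and $|\partial_y|$ on $\mathcal{K}$, hence independent of $c$. (A harmless vertical shift makes the functions negative so the theorem applies verbatim.)

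With uniform $C^0$ and $C^1$ bounds in hand, I would invoke interior Schauder estimates for the quasilinear elliptic minimal $\partial_y$-Killing graph operator $R$: the coefficients of $R$ depend smoothly on the point and on $g_c, \grad g_c$, so on the region where $\grad g_c$ lies in a fixed compact set the operator is uniformly elliptic with coefficients bounded in $C^{0,\beta}$, and standard interior estimates (De Giorgi--Nash--Moser for $C^{1,\beta}$, then Schauder bootstrap for all higher derivatives) give uniform $C^{k}$ bounds for $g_c$ on compact subsets of $\Omega_\infty$. By Arzel\`a--Ascoli a subsequence converges in $C^2_{loc}$ to a limit; since the $g_c$ are monotone in $c$ and already converge pointwise to $g_\infty$, the full family converges and the limit is $g_\infty$, which therefore satisfies $R(g_\infty) = 0$ and is smooth on the interior. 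Thus $\Sigma_\infty = Gr_{\partial_y}(g_\infty)$ is a minimal surface.

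The delicate point — and what I expect to be the main obstacle — is convergence \emph{up to the boundary} of $\Omega_\infty$, i.e.\ controlling the $g_c$ near $\alpha = \overline{p_1 p_2}$ (where $g_c(x,0) = g(x)$) and near the two vertical edges $\{p_1\}\times[0,\infty)$, $\{p_2\}\times[0,\infty)$, uniformly in $c$. On $\alpha$ the Dirichlet data is the fixed function $g$, which is only piecewise smooth and meets the edges at angles $< \pi/2$; away from the corners one gets uniform boundary gradient estimates from suitable barriers (using that the boundary Killing cylinders built in Proposition~\ref{propExhaus} furnish mean-curvature comparison barriers), and then boundary Schauder estimates upgrade this to $C^2$ convergence on compact subsets of the smooth part of $\partial\Omega_\infty$. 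The curve $\alpha_c$ and its endpoints $p_1(c), p_2(c)$ escape to infinity as $c\to\infty$, so they cause no trouble for the limit on any fixed compact region. I would phrase the conclusion as: $g_c \to g_\infty$ in $C^2_{loc}$ on $\Omega_\infty$ minus the finite set of non-smooth points of $\partial\Omega_\infty$ (the corners $p_1, p_2$ and the non-differentiability points of $g$), which is exactly what is needed for the remaining arguments — reinterpreting $\Sigma_\infty$ vertically as a $\pi$-graph and applying Menezes' geometric barriers — to go through.
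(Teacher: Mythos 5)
Your proposal is correct and follows essentially the same route as the paper: interior gradient bounds from the Casteras--Ripoll estimate (Theorem~\ref{gradestCR}) combined with the uniform height bound $0 \leq g_c \leq g$, boundary gradient control from that same sandwich together with the angle condition at $p_1,p_2$, standard elliptic theory to extract a $C^2_{loc}$ limit, and the monotonicity of $c \mapsto g_c$ to upgrade subsequential to full convergence. You supply a couple of details the paper leaves implicit — the vertical shift needed so the hypotheses of Theorem~\ref{gradestCR} (which requires a negative function) apply, and the explicit Schauder/bootstrap step — but these are elaborations, not a different argument.
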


\noindent \emph{Proof of Claim \ref{claimconvergence}.} We are going to use the same argument of Claim \ref{claimsigmac}, via gradient and height estimates for Killing graphs. Let $K \subseteq \Omega_\infty$ be a compact set contained on $\overline{\Omega_\infty}$ with $C^{2,\,\alpha}$ boundary, as on the below picture. As it holds $g_c(x,z) \leq g(x)$, follows that the height of $g_c$ is uniformly bounded on $K$, so we can use Theorem~\ref{gradestCR} to obtain an uniform bound for the norm of the gradient of every $g_c$ on interior points of $K$.

\begin{figure}[H]
\centering
\includegraphics[width=0.7\textwidth]{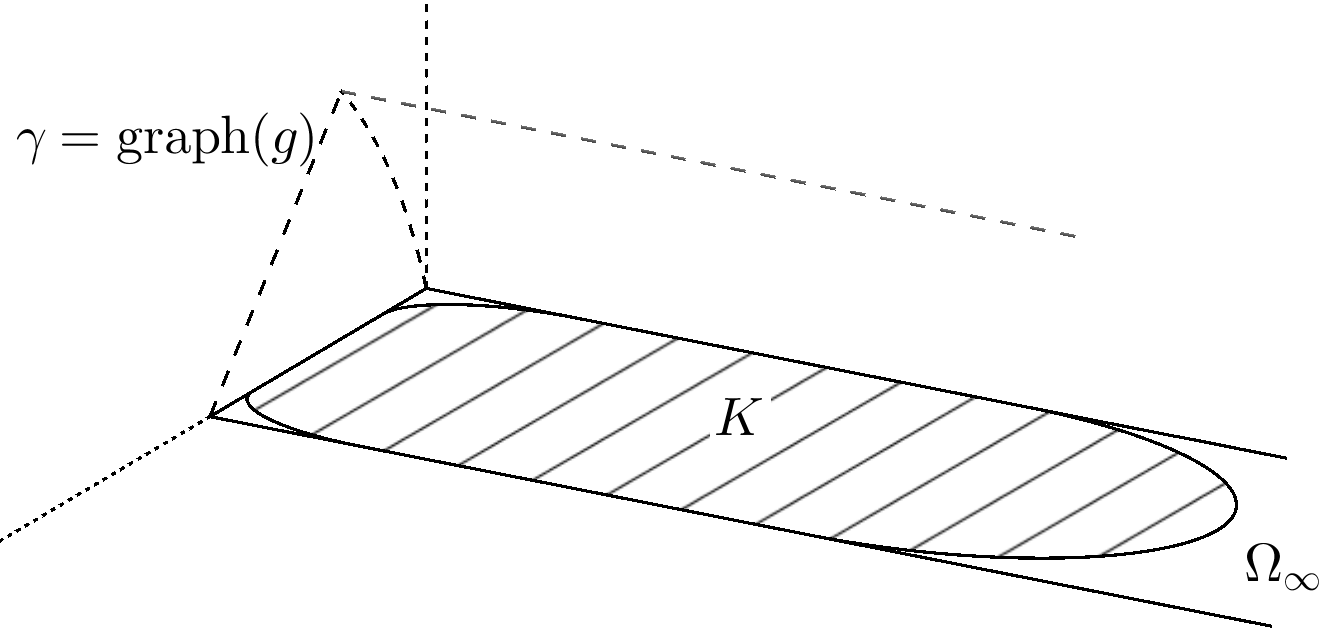}
\end{figure}

Now, we remark that \eqref{gradestdepois}, together with the assumption that the angle $\gamma$ makes with $\alpha$ at $p_1$ and $p_2$ is less than $\pi/2$, implies that every $g_c$ satisfy a uniform gradient estimate also along the boundary of $K$, as $g(0) = g(L) = 0$. As $g_c\vert_K \in C^{2,\,\alpha}(K)$ is smooth up to the boundary, this implies an uniform (not depending on $c$) estimate for the gradient of $g_c$ on $K$.

Now, we just take an exhaustion of $\Omega_\infty$ by compact sets and, by using an standard argument via the theory of partial elliptic equations, we obtain that a subsequence of the $g_c$ converges to $g_\infty$ on the $C^{2}-$norm. In particular, as the sequence is monotone and converges pointwise, follows that the convergence is smooth on the whole $\Omega_\infty$.$\hfill\diamondsuit$

\vspace{0.1cm}

From this claim we obtain that $\Sigma_\infty$ is a minimal surface of $\rar$, and that its boundary is 

\begin{equation*}
\partial \Sigma_\infty = \Gamma_\infty = \gamma \cup \big(\{p_1\}\times [0,\,\infty)\big) \cup \big(\{p_2\}\times [0,\,\infty)\big),
\end{equation*}

\noindent as, on the convergence of $g_c$ we had $g_c = 0$ on the horizontal segments of $\partial \Omega_c$ and $g_c = g$ on $\alpha$, with $g_c$ smooth up to the boundary.

Now, in order to finish the proof of the theorem, it remains to show that $\Sigma_\infty$ is nowhere vertical and that it is unique. The uniqueness comes directly from the fact that it was built as a Killing graph, and that every other surface with such boundary is contained on the $\partial_y$-Killing cylinder over $\Omega_\infty$.

To show that $\Sigma_\infty$ is nowhere vertical, we go back to analyse the problem using $\pi-$graphs. First, if there was an \emph{interior} point $p \in \Sigma_\infty$ such that $T_p\Sigma_\infty$ was vertical, we observe that $\Sigma_\infty  $ and $T_p\Sigma_\infty$ would be two minimal surfaces of $\rar$  tangent to each other at $p$. Then, there are at least two curves, meeting transversely at $p$ on the intersection $T_p\Sigma_\infty\cap\Sigma_\infty$, so $\Sigma_\infty$ cannot be a $\pi-$graph on a neighbourhood of $p$, so it is a $\pi-$cylinder over some line segment\footnote{If $\beta \subseteq \raz$ is a smooth curve, the $\pi-$cylinder $\beta \times [0,\,\infty)$ is minimal if and only if $\beta$ is a line segment: to see this, just use the foliation of $\rar$ by vertical planes which are parallel to the vertical plane generated by the endpoints of $\beta$. It also follows from the more general formula $H(x,y,z) = k_g(x,y)e^{-z\tr(A)}$, where $k_g(x,y)$ denotes the geodesic curvature of $\beta$ on the point $(x,y,0)$. The proof of this formula is a simple computation.} $\beta$ contained on $\partial \Delta$. Also if the point $p \in \partial \Sigma_\infty$ was a boundary point, then the boundary maximum principle would give the same conclusion. Next claim is to show that $\Sigma_\infty$ meets $\pi^{-1}(\gamma)$ uniquely on $\gamma$, so $\Sigma_\infty \supseteq \big( \beta \times[0,\,\infty)\big)$ is a contradiction.

\begin{claim}\label{claimLast} $\Sigma_\infty \cap \pi^{-1}(\gamma) = \gamma$.
\end{claim}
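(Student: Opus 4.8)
The plan is to show that the minimal surface $\Sigma_\infty$ cannot touch the cylinder $\pi^{-1}(\gamma)$ except along its prescribed boundary $\gamma$, using the sweep-out by the exhausting surfaces $\Sigma_c$ together with the geometric barrier furnished by the $\pi$-cylinder over $\gamma$. The key structural observation is that each intermediate surface $\Sigma_c$ is a $\pi$-graph over the convex domain $\Delta$ with $\partial \Delta = \alpha \cup \gamma$, hence $\Sigma_c$ is contained in $\pi^{-1}(\overline{\Delta})$ and meets $\pi^{-1}(\gamma)$ only along $\gamma$ itself (the portion of $\partial \Sigma_c$ lying over $\gamma$). Passing to the limit $c \to \infty$, the $C^2$ convergence of Claim~\ref{claimconvergence} gives $\Sigma_\infty \subseteq \pi^{-1}(\overline{\Delta})$ and $\Sigma_\infty \cap \pi^{-1}(\gamma) \supseteq \gamma$, so the only thing to rule out is that the limit develops extra contact with $\pi^{-1}(\gamma)$ over \emph{interior} points of $\gamma$ at some height $z > 0$.

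First I would set up the barrier: since $\gamma$ is a convex curve meeting $\alpha$ on angles less than $\pi/2$ and $\Delta$ is the convex region it bounds with $\alpha$, the $\pi$-cylinder $\gamma \times \mathbb{R} = \pi^{-1}(\gamma)$ is, by the footnote formula $H(x,y,z) = k_g(x,y)\,e^{-z\tr(A)}$, a surface whose mean curvature vector points \emph{into} $\pi^{-1}(\Delta)$ everywhere (because $\gamma$ is convex as seen from inside $\Delta$, its geodesic curvature has the favourable sign). Thus $\pi^{-1}(\gamma)$ is a mean-convex barrier for $\Sigma_\infty$ from the side of $\Delta$. Next I would argue by contradiction: suppose $q = (x_0, y_0, z_0) \in \Sigma_\infty \cap \pi^{-1}(\gamma)$ with $(x_0,y_0)$ an interior point of $\gamma$ (so $z_0 > 0$, since $\gamma$ itself is already contained in the surface and corresponds to $z=0$). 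Because $\Sigma_\infty \subseteq \pi^{-1}(\overline\Delta)$, the surface $\Sigma_\infty$ lies entirely on one side of $\pi^{-1}(\gamma)$ near $q$, and the two minimal-versus-mean-convex surfaces $\Sigma_\infty$ and $\pi^{-1}(\gamma)$ are tangent at $q$ with the mean curvature comparison in the wrong direction unless they coincide locally. Applying the geometric (interior or boundary) maximum principle — the same tool used throughout Section~\ref{secScherk} — forces $\Sigma_\infty$ to coincide with a piece of $\pi^{-1}(\gamma)$, i.e.\ with a $\pi$-cylinder over an arc of $\gamma$; but that arc would then have to be a \emph{line segment} by the cylinder-minimality criterion in the footnote, contradicting the strict convexity (nonzero geodesic curvature) of $\gamma$ at interior points, or at worst contradicting that $\gamma$ is a genuine convex graph over $\alpha$ rather than equal to $\alpha$.

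The one delicate point — and I expect this to be the main obstacle — is handling contact at a point $q$ lying \emph{over a corner of $\gamma$} (one of the finitely many points where the piecewise-smooth $g$ fails to be differentiable), where neither the interior nor the standard boundary maximum principle applies directly and where $\pi^{-1}(\gamma)$ is not a smooth barrier. To deal with this I would either smooth $\gamma$ slightly from inside $\Delta$ to get a one-sided smooth mean-convex barrier and take limits, or invoke the fact that at such a convex corner the two smooth arcs of $\pi^{-1}(\gamma)$ emanating from the vertical line through the corner both have mean curvature vector pointing into $\pi^{-1}(\Delta)$, so $\Sigma_\infty$ — being trapped in $\pi^{-1}(\overline\Delta)$ and on the correct side of each arc — cannot touch the corner line above height $0$ without violating one of the two one-sided comparisons. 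In either approach the conclusion is $\Sigma_\infty \cap \pi^{-1}(\gamma) = \gamma$, which is exactly Claim~\ref{claimLast}, and this then immediately forbids $\Sigma_\infty$ from containing any $\pi$-cylinder $\beta \times [0,\infty)$ with $\beta \subset \partial\Delta$ touching $\gamma$, completing the nowhere-vertical argument and hence the proof of Theorem~\ref{thmScherks}.
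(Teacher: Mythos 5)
Your approach is genuinely different from the paper's, and in spirit it is simpler: you take $\pi^{-1}(\gamma)$ itself as a mean-convex barrier (via the footnote formula $H=k_g\,e^{-z\tr(A)}$) and apply the interior tangency comparison directly to $\Sigma_\infty$, whereas the paper tilts the problem into the tangent vertical plane $L$ at a point of $\gamma$, takes a strictly stable rectangle $R\subset L$ at large height, perturbs $\partial R$ slightly to get a minimal barrier $\widetilde R$ on the $\Sigma_\infty$ side of $L$, and then derives a contradiction from a first contact between $\widetilde R$ and the monotone family $\Sigma_\ell$ rather than with $\Sigma_\infty$ itself. Where $\gamma$ is smooth and strictly convex, your comparison argument does close: $\Sigma_\infty$ lies on the $\Delta$-side of $\pi^{-1}(\gamma)$, and at an interior tangency the inequality $0=H_{\Sigma_\infty}\geq H_{\pi^{-1}(\gamma)}>0$ is an immediate contradiction.

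The gap is precisely at the non-strictly-convex part, and it is not a marginal case. The hypothesis of Theorem~\ref{thmScherks} only requires $\gamma$ to be a \emph{convex} graph over $\alpha$, and the motivating example of Menezes (which the theorem is meant to generalize) has $\gamma=\overline{p_1o}\cup\overline{op_2}$, two straight segments joined at a corner. On a straight arc $\beta\subset\gamma$ one has $k_g\equiv 0$, so $\pi^{-1}(\beta)$ is a \emph{minimal} vertical plane and the interior tangency principle only yields local coincidence of $\Sigma_\infty$ with $\pi^{-1}(\beta)$; your appeal to ``contradicting the strict convexity (nonzero geodesic curvature) of $\gamma$'' presumes a hypothesis that is not there. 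To close this you would need a further step, e.g.\ unique continuation: if a connected minimal surface coincides with a vertical plane on an open set it is that plane, which is incompatible with $\gamma\subset\partial\Sigma_\infty$ not being a single line segment. Also, your first suggested fix for corners — ``smooth $\gamma$ slightly from inside $\Delta$ and take limits'' — does not produce a one-sided barrier: a cylinder $\pi^{-1}(\gamma')$ with $\gamma'$ pushed into $\Delta$ is \emph{crossed} by $\Sigma_\infty$ (which is only known to lie in $\pi^{-1}(\overline\Delta)$, not in the smaller $\pi^{-1}(\overline{\Delta'})$), so the tangency comparison no longer applies. Your alternative two-sided wedge argument at a corner is the right idea and in fact forces the putative tangent plane of $\Sigma_\infty$ to be vertical, at which point one is back to comparing with vertical planes — essentially the paper's route. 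So your proposal is a valid and more elementary argument for the strictly convex smooth case, but as written it does not cover the flat and cornered curves the theorem actually allows, and the suggested repairs need either the unique-continuation step or a reduction to the vertical-plane comparison that the paper carries out via $\widetilde R$.
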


\noindent \emph{Proof of Claim \ref{claimLast}.} We use the same barrier technique that A. Menezes, \cite{Men}: Let $\gamma_i$ be a smooth component of $\gamma$ and let $p \in \gamma_i$ be any point. Consider $L$ the vertical plane of $\rar$ containing the tangent line to $\gamma_i$ at $p$ (this is well defined even for $p \in \partial \gamma_i$, as $\gamma_i$ is smooth). As $\gamma$ is convex, this implies that $\Delta$ is contained on the same connected component of $\rar$ defined by $L$, so also does $\Sigma_\infty$.

Recall the functions $u_c:\Delta \rightarrow\R$ defined via \eqref{SigmaPi}. Let $c_0 = \sup_{\Delta} u_0$ and let $c_2 > c_1 > c_0$ be any two large numbers. We consider a rectangle $R \subseteq L$ with boundary $\partial R = r_1\cup r_2 \cup s_1 \cup s_2$ given by two parallel horizontal segments $r_1$ and $r_2$ and two vertical segments $s_1$ and $s_2$, such that $s_1 \subseteq \{z = c_1\}$ and $s_2 \subseteq \{z = c_2\}$ (see Figure \ref{figbarrier}). Moreover, we assume that $R$ projects into $\raz$ in a compact segment $r \ni p$ with endpoints $q_1 = \pi(s_1)$ and $q_2 = \pi(s_2)$, contained on the same half-space determined by $\{y=0\}$ (the vertical plane containing $\alpha$) and with $q_2$ outside of $\Delta$ ($q_1 \in \Delta$ can happen if and only if $\gamma_i$ is a line segment).

\begin{figure}
\centering
\includegraphics[width=0.8\textwidth]{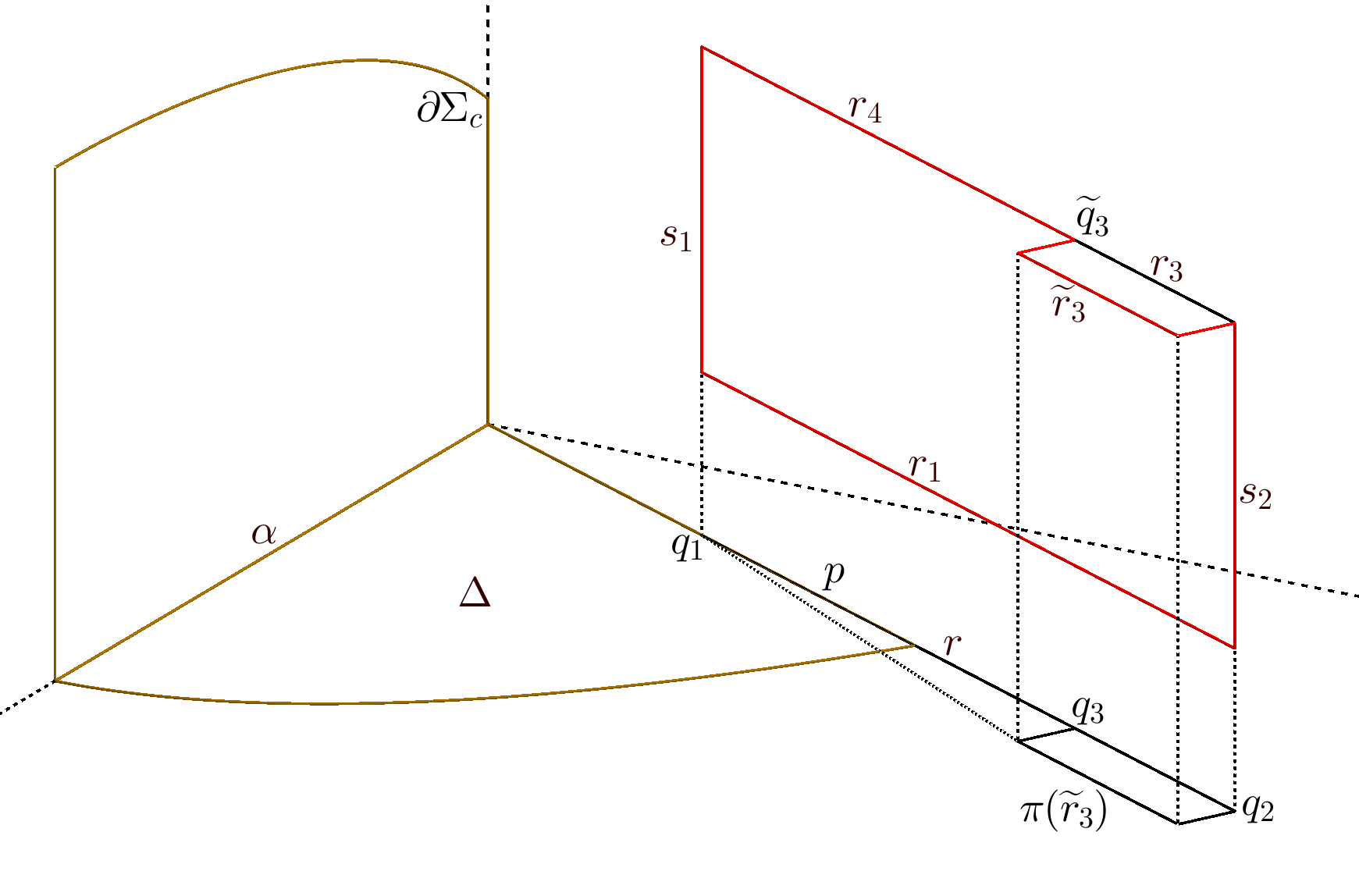}
\caption[The construction of the barrier $\widetilde{R}$]{The construction of the barrier $\widetilde{R}$, by deforming $\partial R$ over $r_3$.\label{figbarrier}}
\end{figure}

Let $q_3 \in \pi(R)$ be a point interior to the projection of $R$ but that lies outside of $\Delta$. Then, $\widetilde{q}_3 = \pi^{-1}(q_3)\cap r_2$ divides $r_2$ in two compact segments $r_3 \cup r_4$, $r_3$ projecting entirely outside of $\Delta$ and with $p \in \pi(r_4)$.

We remark that $L$ is stable, as it is transversal to a (horizontal) Killing field, and in particular, it follows from the useful criteria due to D. Fischer-Colbrie and R. Schoen, Theorem 1 of \cite{FS} (also proved on Proposition 1.32 of the book by T. Colding and W. Minicozzi, \cite{CM}) that $R$ is strictly stable, thus small perturbations of $\partial R$ give rise to minimal surfaces with the perturbed boundary.

We change $r_2$ by making a parallel translation of $r_3$ on the direction of the half-space that contains $\Sigma_\infty$, whose projection still does not intersect $\Delta$, joined by two small segments and denote such curve $\widetilde{r}_3$, in such a way that $r_3 \cup \widetilde{r}_3$ bounds a small rectangle on the horizontal plane $\{z = c_2\}$. We assume this perturbation is small in such way that its projection does not intersect $\Delta$. Let $\widetilde{R}$ be a minimal surface of $\rar$ whose boundary is the perturbed rectangle $r_1\cup \widetilde{r}_3 \cup r_4\cup s_1 \cup s_2 $. Such surface is nowhere vertical and its contained on the convex hull of its boundary, in particular it is contained on $\{z \geq c_1\}$ and on the same half space that $\Sigma_\infty$ with respect to the plane $L$.

Now, it is easy to see that $\pi(\widetilde{R}) \cap \Delta \neq \emptyset$, as otherwise $\widetilde{R}\cap R$ would have a interior contact point. Moreover, $\widetilde{R}$ is above $u_0$ on $\pi(\widetilde{R}) \cap \Delta$, by the construction of $\widetilde{R}$. Then, if $\Sigma_\infty \cap \pi^{-1}(\gamma_i) \neq \gamma_i$, we would have that $\Sigma_\infty \cap \widetilde{R} \neq \emptyset$, thus, for some $\ell > 0$ there would be a first contact point between $\Sigma_\ell$ and $\widetilde{R}$. As $\partial \Sigma_\ell$ does not intersect the convex hull of $\partial \widetilde{R}$, it does not intersect $\widetilde{R}$. Moreover, neither $\partial \widetilde{R}$ can intersect $\Sigma_\ell$, as this would imply such point would be on the plane $L$, so $\Sigma_\ell$ would have a vertical tangent plane. Then this contact point is going to be interior for both, reaching to a contradiction that proves the claim.  $\hfill\diamondsuit$

From this claim and from the argument previously done, we obtain that $\Sigma_\infty$ is actually more than a Killing graph, it is also a $\pi-$graph, nowhere vertical, which finishes the proof of the theorem.\end{proof}

\noindent\hspace{0.6cm}\'Alvaro Kr\"uger Ramos

\noindent\hspace{0.6cm}Departamento de Matematica

\noindent\hspace{0.6cm}Univ. Federal do Rio Grande do Sul

\noindent\hspace{0.6cm}Av. Bento Gon\c{c}alves 9500

\noindent\hspace{0.6cm}91501-970 -- Porto Alegre -- RS -- Brazil

\noindent\hspace{0.6cm}alvaro.ramos@ufrgs.br

\noindent\hspace{0.6cm}{\small {(supported by CNPq - Brasil)} }

\end{document}